\newtheorem{definition}{Definition}
\newtheorem{theorem}{Theorem}
\newtheorem{lemma}[theorem]{Lemma}
\newtheorem{corollary}[theorem]{Corollary}
\newtheorem{prop}{Proposition}
\newcommand\calB{\mathcal{B}}
\newcommand\calF{\mathcal{F}}
\newcommand\calM{\mathcal{M}}
\newcommand\calP{\mathcal{P}}     
\newcommand\calS{\mathcal{S}}     
\newcommand\calX{\mathcal{X}}     
\newcommand\calY{\mathcal{Y}}
\newcommand\calZ{\mathcal{Z}}
\newcommand\CycSp{\mathcal{C}}    
\newcommand\RelCyc{\mathcal{C_R}} 
\newcommand\expand[2]{\mathcal{E}_{#1}\left(#2\right)}   
\newcommand\expandd[2]{\mathcal{E}_{#1}(#2)}             
\newcommand\symdiff{\mathop{\triangle}}            
\DeclareMathOperator*{\symdiffseries}{\scalerel*{\triangle}{\sum}}   
\newcommand\pInt{ \stackrel{\tt pi}{\longleftrightarrow} }
\newcommand\slInt{ \stackrel{\tt sli}{\longleftrightarrow} }             
\newcommand\smCyc{\textnormal{(smaller cycles)}}
\DeclareMathOperator\Span{span}                         
\begin{document}
	
	{\title{Theory and algorithms for clusters of cycles in graphs for material networks}

		\author[1]{Perrin E. Ruth\thanks{pruth@umd.edu}}
		\author[1]{Maria Cameron\thanks{mariakc@umd.edu}}
		\affil[1]{\small{Department of Mathematics, University of Maryland, College Park, MD 20742, USA}}}
		\maketitle


\begin{abstract}
	Analysis of complex networks, particularly material networks such as the carbon skeleton of hydrocarbons generated in hydrocarbon pyrolysis in carbon-rich systems, is essential for effectively describing, modeling, and predicting their features.  An important and the most challenging part of this analysis is the extraction and effective description of cycles, when many of them coalesce into complex clusters. A deterministic minimum cycle basis (MCB) is generally non-unique and biased to the vertex enumeration. The union of all MCBs, called the set of relevant cycles, is unique, but may grow exponentially with the graph size. To resolve these issues, we propose a method to sample an MCB uniformly at random. The output MCB is statistically well-defined, and its size is proportional to the number of edges. We review and advance the theory of graph cycles from previous works of Vismara, Gleiss et al., and Kolodzik et al. In particular, we utilize the polyhedron-interchangeability ({\tt pi}) and short loop-interchangeability ({\tt sli}) classes to partition the relevant cycles. We introduce a postprocessing step forcing pairwise intersections of relevant cycles to consist of a single path. This permits the definition of a dual graph whose nodes are cycles and edges connect pairs of intersecting cycles. The {\tt pi} classes identify building blocks for crystalline structures. The {\tt sli} classes group together sets of large redundant cycles. We present the application to an amorphous hydrocarbon network, where we (i) theorize how the number of relevant cycles may explode with system size and (ii) observe small polyhedral structures related to diamond.
\end{abstract}

\tableofcontents


\section{Introduction}



\begin{figure*}
	\centering
	\includegraphics{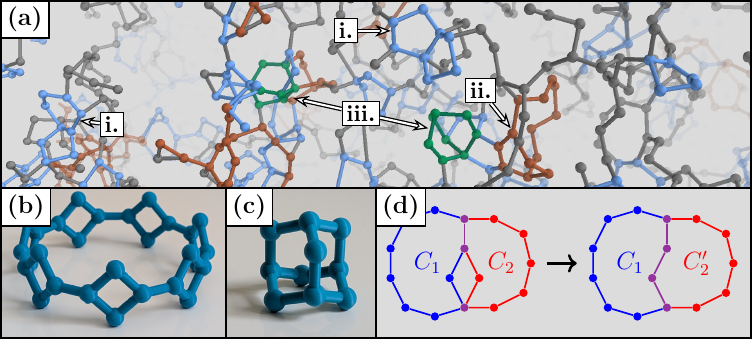}
	\caption{\textbf{(a)} Snapshot of the giant biconnected component in a simulation initialized to adamantane ($\rm C_{10}H_{16}$) at 4000K and 40.5GPa. 
	We highlight (i) \emph{essential}~\cite{gleissInterchangeability2000} pentagonal and hexagonal rings belonging to every MCB in blue, (ii) a {\tt sli} class composed of two 11-membered rings separated by a hexagonal ring in orange, and (iii) two {\tt pi} classes with the same shape as barallene ($\rm C_8H_8$) in green.
	\textbf{(b)} A graph made of diamond-shaped cycles chained into a loop like a bracelet. An MCB of this graph includes all of the small diamond-shaped loops and one large loop. \textbf{(c)} Carbon skeleton of adamantane, the smallest diamondoid, i.e., diamond is formed by repeatedly tiling adamantane. An MCB of adamantane includes any 3 of the 4 hexagonal faces. \textbf{(d)} Depiction of the process of modifying a cycle $C_2\to C_2'$ such that the cycles $C_1$ and $C_2$ overlap over a single path.}
	\label{fig:intro_graphs}
\end{figure*}

\subsection{Motivation}

Amorphous material networks appear in numerous contexts such as polymers~\cite{torres-knoopModeling2018}, energetic carbon~\cite{dernovMapping2025,deringerMachine2017}, soot~\cite{villa-alemanRaman2022}, and silicate melts~\cite{clarkShockramp2023}. 
Understanding the network topology of these systems is critical because it influences their material properties.
The local atomistic structures within these networks can be leveraged to make predictions about their thermodynamic limits and to reduce combinatorial complexity.
Here, we review, discuss, and extend theory and algorithms for the seemingly simple task of sampling rings from material networks. 
The assumptions under which these algorithms are efficient are consistent with the carbon skeletons of large hydrocarbons arising in hydrocarbon pyrolysis, 
a complex chemical reaction system composed of carbon and hydrogen at extreme temperatures and pressures.

Conventional approaches for studying hydrocarbon pyrolysis include molecular dynamics {(MD)}~\cite{chengThermodynamics2023,chenTransferable2019,dufour-decieuxTemperature2022,dufour-decieuxPredicting2023} simulations and kinetic Monte Carlo~\cite{dufour-decieuxAtomicLevel2021,dufour-decieuxTemperature2022}. 
MD is flexible and can be used to simulate various systems and thermodynamic conditions. 
However, MD simulations are expensive. They may take days or even weeks to achieve nanosecond timescales. Furthermore, they need to be performed anew for every set of initial conditions. 
Kinetic Monte Carlo~\cite{gillespieGeneral1976,highamModeling2008} simulates a chemical system using a set of known chemical reactions. 
These reactions and their reaction rates can be learned from a set of molecular dynamics simulations. This approach is limited by the combinatorial growth in the number of reactions, e.g., a simulation with $\sim10^3$ atoms results in $\sim10^4$ potential reactions~\cite{yangLearning2017,yangDataDriven2019,chenTransferable2019}.
The combinatorial growth of hydrocarbons has been studied as early as 1875, where Cayley used generating functions to estimate the number of alkanes of a given length~\cite{cayley1875ueber,rains1999cayley}.
Due to this growth, only small molecules counts can be predicted accurately by this approach.
To address this limitation, Refs.~\cite{dufour-decieuxAtomicLevel2021,dufour-decieuxTemperature2022} propose kinetic models with only local reactions accounting for chemical changes near the reaction site. 
This approach reduces the number of reactions and can be used to make predictions about the size of the largest molecule. 
However, like MD simulations, the local kinetic Monte Carlo requires a new run for each set of initial conditions. 

Our research group utilized random graphs as equilibrium models for hydrocarbon pyrolysis~\cite{dufour-decieuxPredicting2023,ruthCyclic2025}. 
For hydrogen-rich systems, the configuration model~\cite{newmanRandom2001,molloySize1998} -- a classical random graph model -- was used to estimate the molecule size distribution~\cite{dufour-decieuxPredicting2023}. 
This model is particularly effective for small alkanes ($\le20$ carbon atoms) or fully saturated acyclic hydrocarbons.
However, as the hydrogen-to-carbon ratio decreases, the configuration model significantly exaggerates the size of the largest molecule, mostly because it does not promote the formation of small carbon rings abundant in hydrocarbon molecules.
Using Refs.~\cite{karrerRandom2010,newmanAssortative2002} as building blocks, we introduced a random graph model with disjoint loops that accurately predicted the size of the largest molecule~\cite{ruthCyclic2025}. This disjoint loop model also predicted the phase transition between the states with and without a giant molecule, i.e., a molecule that contains a macroscopic fraction of the total number of atoms. 
In carbon-rich systems, the rings cluster together, and the disjoint loop assumption fails. See Figure~\ref{fig:intro_graphs}~(a). In this setting, proper statistics for carbon rings are required.


\subsection{An overview}

At a macroscopic scale, a ring cluster can be identified as a biconnected component or a collection of nodes where each pair of nodes is connected by two or more disjoint paths~\cite{diestelGraph2017,bollobasModern1998}. A method for computing biconnected components is given in the 1971 paper by Hopcroft and Tarjan~\cite{hopcroftAlgorithm1973}. At a microscopic scale, we want to obtain the building blocks of these clusters, i.e., their loops. In many cases, the na\"ive approach of computing all cycles is computationally intractable. The total number of cycles grows exponentially with graph size, many of which are redundant. Instead, most methods for sampling loops utilize the cycle space or the vector space of cycles.

Classically, the loops of a graph are sampled from a \emph{minimum cycle basis} (MCB) or a basis of the cycle space that minimizes the sum of cycle lengths~\cite{kavithaCycle2009}. In chemical systems, a minimum cycle basis is also called a smallest set of smallest rings, as introduced by Plotkin~\cite{plotkinMathematical1971}. {MCBs} are not without their flaws. Most notably, they are not unique, so the loops sampled via an MCB depend on node labels and algorithmic implementation. This issue has led to several competing definitions for sampling cycles in chemistry literature~\cite{bergerCounterexamples2004}.

Here, we focus on the approach introduced by Vismara, 1997~\cite{vismaraUnion1997}, where the set of \emph{relevant cycles} defined as the union of all MCBs is considered. 
Equivalently, the relevant cycles are the unique cycles that cannot be decomposed into smaller cycles. 
In the worst case, the number of relevant cycles grows exponentially with respect to the number of nodes, but in practice the number of relevant cycles is typically much smaller~\cite{mayEfficient2014}. 
Nevertheless, this is an undesirable property of the relevant cycles. 
To guarantee properties such as the number of relevant cycles and their lengths can be computed in polynomial time, Vismara partitions the relevant cycles into families of related cycles~\cite{vismaraUnion1997}.

Another approach would be to investigate Vismara's cycle families directly. However,  these families are not unique, thus returning us to our original issue. 
To resolve this issue, Gleiss~{et al.}, 2000,~\cite{gleissInterchangeability2000} introduce the \emph{interchangeability relation}, an equivalence relation that partitions the relevant cycles into unique interchangeability classes. 
This relation helps to characterize the lack of uniqueness in MCBs.
Two relevant cycles are interchangeable if one cycle can be swapped for the other in a cycle basis, and if the two cycles only differ by equal or shorter length cycles. 
Often, this relation groups together large cycles that differ by smaller cycles. 
For example, the cycles around the perimeter of the bracelet graph in Figure~\ref{fig:intro_graphs} (b) form a single interchangeability class. 
In other cases, this relation groups together cycles that belong to three-dimensional structures. 
For instance, the four hexagonal faces of adamantane in Figure~\ref{fig:intro_graphs} (c) form a single interchangeability class.

Kolodzik {et al.}, 2012~\cite{kolodzikUnique2012}, argue these interchangeability classes are too coarse and introduce the \emph{unique ring family (URF)-pair-relation}. This relation is more restrictive, where two relevant cycles are URF-pair-related if they share edges and if they differ by \emph{strictly shorter} cycles. This separates the cases above. The large cycles in the bracelet graph are URF-pair-related because they differ by smaller diamond cycles. The hexagonal faces in adamantane are not URF-pair-related because they differ by equal length hexagonal faces. The URF-pair-relation is not an equivalence relation. Instead, the unique ring families are given by the transitive closure of the URF-pair-relation.

\subsection{The goal and a summary of main results}

{The goal of this work} is to review and extend theory and algorithms that describe and extract complex amorphous ring clusters in carbon-rich simulation data of hydrocarbon pyrolysis. To achieve this, we adapt and advance concepts and methods from Refs.~\cite{gleissInterchangeability2000,kolodzikUnique2012}. 
Our theoretical developments are the following:
\begin{itemize}
	\item 
	At a course level, we introduce the \emph{polyhedron-interchangeability} ({\tt pi}) relation, a modification of the interchangeability relation from Ref.~\cite{gleissInterchangeability2000}.
	Unlike interchangeability, {\tt pi} relates pairs of cycles that can be swapped between MCBs. We prove {\tt pi} is an equivalence relation (Theorem~\ref{Thm:DirInt_Eq_Rel}) and the resulting equivalence classes can be computed using a given MCB (Theorem~\ref{Thm:DirInt_Comp}). The theory of {\tt pi} also characterizes how we can modify MCBs by swapping cycles.
	\item At a granular level, we introduce the \emph{short loop-interchangeability} ({\tt sli}) relation, adapted from the URF-pair-relation from Ref.~\cite{kolodzikUnique2012}. {\tt sli} removes the requirement that cycles share edges from the URF-pair-relation. As a result, {\tt sli} is an equivalence relation. 
	{\tt sli} was first introduced in the dissertation of Gleiss~\cite{gleissShort2001} as stronger interchangeability, where it was shown to be an equivalence relation. 
	We prove the {\tt sli} classes can be computed using a given MCB. 
	We suggest to use the {\tt pi} and {\tt sli} relations together to analyze ring clusters.
\end{itemize}

Using this theory we develop the following methods for sampling cycles in a graph.

\begin{itemize}
	\item \emph{Random sampling of minimum cycle bases.} We introduce a method for sampling MCBs uniformly at random. 
	Of course, a random MCB is not deterministic, but it does have a concrete definition as a probabilistic object. 
	Thus, the practitioner can confidently sample loops from a random MCB knowing it is statistically well-defined.
	\item \emph{Pairwise intersection of cycles.} 
	A graph is defined by nodes and by edges denoting connections between nodes. 
	Similarly, we are interested in loops and their intersections that resulting in the formation of loop cluster. 
	In this work, we fully characterize the way pair of cycles in MCBs can intersect. 
	Let the intersection of two cycles contain more than one path as in Fig.~\ref{fig:intro_graphs}~(d). 
	We introduce a method for modifying one of these cycles to make the intersection consist of a single path. 
	This property is desirable as individual paths are conceptually simple and they are easy to represent computationally as node lists.
	
	Using this, we define a dual graph, which can be interpreted as a graph of cycles. The notion of dual graph has a long history in planar graphs~\cite{diestelGraph2017}, and graphs of cycles have been considered for modeling chemical systems as well~\cite{torres-knoopModeling2018,nouleho2019improving}. Our contribution is defining the structure of these dual graphs for MCBs.
\end{itemize}

We apply this theory to the hydrocarbon pyrolysis system. In most cases, we find that lack of uniqueness of MCBs is largely caused by rings at varying lengths as in Figure~\ref{fig:intro_graphs}~(b). We find ring statistics vary greatly between the relevant cycles and an MCB. In particular, the relevant cycles introduce redundant large rings. 
Furthermore, our analysis of the relevant cycles naturally uncovers three-dimensional structures. 
These structures take the form of polyhedra with non-flat faces as in Figure~\ref{fig:intro_graphs} (c). 
These polyhedra are useful for describing crystalline materials. 
Proper sampling of these polyhedra will require further theoretical advancements.



This manuscript is organized as follows. In Section~\ref{sec:background}, we provide the necessary background for the cycle space including the relevant cycles from Ref.~\cite{vismaraUnion1997} and the partitions of the relevant cycles from Refs.~\cite{gleissInterchangeability2000,kolodzikUnique2012}. In Section~\ref{sec:Theory}, we introduce the {\tt pi} and {\tt sli} relations used to partition the relevant cycles. In Section~\ref{sec:algorithm}, we provide algorithms for computing the {\tt pi} and {\tt sli} classes. Our Python codes are available on Github~\cite{RuthGithub}. These codes take a simple graph from the NetworkX package~\cite{hagbergExploring2008} as input.
In Section~\ref{sec:sampling}, we provide an algorithm for sampling MCBs uniformly at random, and we characterize the intersection of cycles in MCBs. In Section~\ref{sec:results}, we show how this approach can be used to characterize ring clusters from hydrocarbon pyrolysis simulations. 
These molecular dynamics simulations are obtained via LAMMPS~\cite{plimptonFast1995,thompsonLAMMPS2022} using the ReaxFF~\cite{vanduinReaxFF2001,chenowethReaxFF2008,srinivasanDevelopment2015,ashrafExtension2017} force field permitting chemical reactions.

\section{Background}
\label{sec:background}

\subsection{Cycle space and fundamental cycle bases}

\label{sub:cyc_space}

Cycle spaces have a rich history in electronic circuit theory, chemistry, and other contexts. We start with a brief discussion of classical results in this field. We also introduce notation, which is merged from Refs.~\cite{gleissInterchangeability2000,kavithaCycle2009,vismaraUnion1997}. For a more detailed history and discussion of these topics we recommend the books on graph theory~\cite{bollobasModern1998,diestelGraph2017}, review of cycle bases~\cite{kavithaCycle2009}, review of cycles in chemical systems~\cite{bergerCounterexamples2004}, and references therein. 

\begin{figure}[t]
	\centering
	\includegraphics{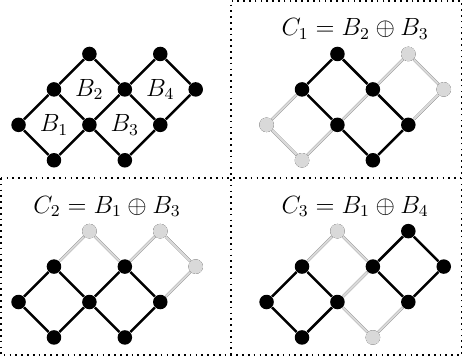}
	\caption{A graph with four face cycles $B_1$, $B_2$, $B_3$, and $B_4$ forming a basis. Three additional cycles $C_1$, $C_2$, and $C_3$ are shown. $C_1$ is a single loop while $C_2$ and $C_3$ are not.}
	\label{fig:cyc_space} 
\end{figure}

Let $G=(V,E)$ be a graph with $n\coloneq|V|$ nodes and $m\coloneq|E|$ edges. Formally, a \textit{cycle} is an edge-induced subgraph of $G$ where all nodes have even degree. More intuitively, a cycle is a collection of loops that do not share edges. See Fig.~\ref{fig:cyc_space}. We note, nearly all cycles we consider are simple loops, including all cycles introduced later in this section in fundamental and minimum cycle bases.

The utility of this definition is that the set of cycles -- denoted by $\CycSp$ -- forms a vector space called the \textit{cycle space}. 
The operation we apply to cycles is the symmetric difference $C_1\oplus C_2=(C_1\setminus C_2)\cup(C_2\setminus C_1)$. 
The scalars of this vector space, 
0 and 1, comprise the field $GF(2)$ with addition and multiplication modulo 2.
Note, a cycle is necessarily its own additive inverse, $C\oplus C = \emptyset$.  

Since a cycle $C\in\CycSp$ is a set of edges, it may be represented as an \textit{incidence vector}: a vector in $\{0,1\}^m$ where element $i$ indicates whether edge $i$ is contained in $C$. 
{Not all incidence vectors correspond to cycles.} 
In vector form, the symmetric difference of cycles can be computed via an element-wise XOR operation (or by vector addition modulo 2). We distinguish that the cycle space is only a subspace of the set of incidence vectors, and ultimately we want a lower-dimensional representation of cycles.

\begin{figure*}[t]
	\centering
	\includegraphics[width=6.5in]{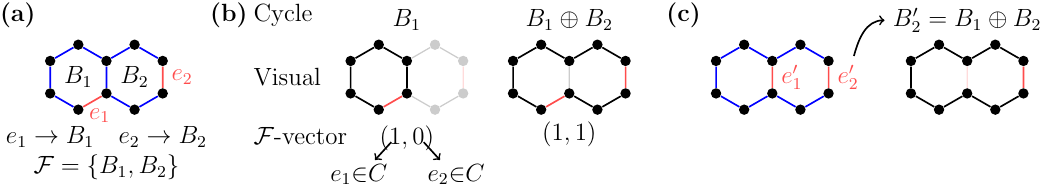}
	\caption{\textbf{(a)} A graph $G$ with edges colored according to whether they are inside (blue) or outside (red) the spanning tree $T$. 
		The face cycles $B_1$ and $B_2$ form a fundamental cycle basis $\calF$, where they are composed of an edge in $E(G){\setminus} E(T)$ joined by a path in $T$.
		\textbf{(b)} A depiction of the cycles $B_1$ and $B_1\,{\oplus}\, B_2$ alongside their binary representation as fundamental cycle basis $\calF$ vectors. \textbf{(c)} Another spanning tree $T'$ resulting in a different fundamental cycle basis that contains $B_2'\,{=}\,B_1\,{\oplus}\, B_2$.}
	\label{fig:FCB}
\end{figure*}

This notion of a vector space leads to a natural definition of a basis. Specifically, a \textit{cycle basis} is a set of linearly independent cycles $\calB=\{B_1,\hdots,B_\nu\}$ that span the cycle space, where $\nu$ is the dimension of the cycle space. Then, we may expand a cycle $C$ as follows,
\begin{equation}
	C = b_1 B_1\oplus\hdots\oplus b_\nu B_\nu,\quad b_1,\hdots,b_\nu\in\{0,1\}.
	\label{eq:cyc_exp_format_full}
\end{equation}
Note, lower case letters are used for scalars, upper case for cycles, and math italics (e.g. $\calB$) for sets of cycles. Computationally, it is easier to store cycles as binary vectors $(b_1,\hdots,b_\nu)$ where the addition operator $\oplus$ is still interpreted as the element-wise XOR operation.

Now that we know how to use a cycle basis, the next question is ``how do we construct such a basis?''
A classical approach is to utilize a spanning tree.


\begin{definition}
	{Let $T$ be a spanning tree of $G$, or a forest if $G$ is disconnected, and $\{e_i\}$ the edges in $G$ outside of $T$, i.e., $e_i\in E(G)\setminus E(T)$. The \emph{fundamental cycle basis} induced by $T$ is the set of cycles $\calF=\{B_i\}$, where $B_i$ is the cycle composed of the edge $e_i$ and the path that connects its endpoints in $T$.}
	\label{def:FCB}
\end{definition}
\noindent An example of a fundamental cycle basis is given in Fig.~\ref{fig:FCB} (a). To get the expansion of a cycle $C$ with respect to $\calF$ we focus on the edges $\{e_i\}$. Note, there exists a unique basis cycle $B_i\in\calF$ for each edge $e_i$. 
So, $B_i$ is in the expansion of $C$ with respect to $\calF$ if and only if $\{e_i\}$ is contained in $C$. Furthermore, the coefficient $b_i$ as defined in Eq.~\eqref{eq:cyc_exp_format_full} is the indicator that $e_i\in C$ and $(b_1,\hdots,b_\nu)$ is the incidence vector of $C$ restricted to edges in $E(G)\setminus E(T)$.
Following this procedure, we find the rank of the cycle space is the number of edges in $E(G)\setminus E(T)$:
\begin{equation}
	\nu = m-n+(\text{\# Connected Components}),
\end{equation}
where a tree with $n$ nodes has $n-1$ edges, and $T$ has a spanning tree of each connected component of $G$. However, we have not yet verified that $\calF$ is a basis.


It is easy to see that $\calF$ is linearly independent, since each cycle $B_i$ has an edge $e_i$ that the other cycles in $\calF$ do not. To see that $\calF$ spans $\CycSp$, we take an arbitrary cycle $C\in\CycSp$. Then, for each basis cycle $B_i\in\calF$ we add it to $C$ if the corresponding edge $e_i\in E(G)\setminus E(T)$ is in $C$:
\begin{equation}
	C' = C\oplus \bm{1}_{[e_1\in C]}B_1\oplus\hdots\oplus\bm{1}_{[e_\nu \in C]}B_\nu
\end{equation}
where $\bm{1}$ is the indicator function. By cancellation from the $\oplus$-operator, $C'$ contains no edges in $E(G)\setminus E(T)$. In other terms, $C'$ is contained in the spanning forest $T$. Trees lack loops, so $C'$ is necessarily the empty cycle $\emptyset$, and $C$ is contained in the span of $\calF$.

\subsection{Minimum cycle bases and relevant cycles}

\label{sub:MCB+Vis_Fam}
Fundamental cycle bases are useful for analyzing the cycle space as a vector space. However, these bases should not be used when sampling loop statistics. Specifically, fundamental cycle bases are not unique and depend on the spanning tree used to construct them. Furthermore, a given fundamental cycle basis may contain large cycles that are not intuitively relevant.

To elaborate on this issue consider the graph depicted in Figure~\ref{fig:FCB} with two hexagons that share an edge. The spanning tree in Figure~\ref{fig:FCB} (a) results in a fundamental cycle basis composed of these hexagons, as desired. However, the spanning tree in Figure~\ref{fig:FCB} (c) produces a fundamental cycle basis containing the larger cycle with 10 nodes surrounding the perimeter of this graph. We argue, that it is more natural to describe this larger cycle as the sum of smaller cycles.

A common approach taken to address this concern is to take a cycle basis that contains only short cycles:
\begin{definition}
	A \emph{mimimum cycle basis (MCB)} $\calM$ is a cycle basis that minimizes the sum of cycle lengths
	\begin{equation*}
		{\sf Cost}(\calM) = \sum_{B\in\calM} |B|
	\end{equation*}
	where $|B|$ is the number of edges in cycle $B$.
\end{definition}
\noindent As one might hope, the two hexagons in Figure~\ref{fig:FCB} form a unique MCB.

This definition has an interesting history in many fields. Ref.~\cite{kavithaCycle2009} offers a survey of algorithms to compute MCBs. 
Ref.~\cite{bergerCounterexamples2004} reviews methods for counting cycles in chemical systems. 
We remark that, in chemical systems, a cycle is typically called a ring, and an MCB is referred to as the ``Smallest Set of Smallest Rings''. In Section~\ref{sub:MCB_comp}, we review an algorithm from Ref.~\cite{kavithaFaster2004} for computing MCBs. This algorithm runs in $O(mn^2)$ time. In Section~\ref{sub:MCB_COB}, we show how one can expand an arbitrary cycle in terms of an MCB output by this algorithm. 

\begin{figure}[t]
	\centering
	\includegraphics{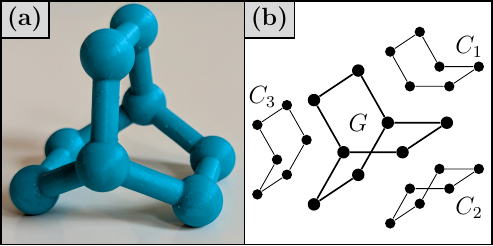}
	\caption{
		A graph $G$ that does not have a unique MCB. Ignoring double bonds, $G$ is the carbon skeleton of barallene with formula $\rm C_8 H_8$. \textbf{(a)} 3D printed visualization of $G$. \textbf{(b)} Two-dimensional representation of $G$. The three hexagonal relevant cycles $C_1$, $C_2$, and $C_3$ are shown separately. Any pair of these cycles forms an MCB, where the third cycle is the sum of the other two, e.g., $\calM_1=\{C_1,C_2\}$ is an MCB and $C_3=C_1\oplus C_2$.
	}
	\label{fig:MCB_Unique}
\end{figure}

Unfortunately, MCBs are not always unique -- see Figure~\ref{fig:MCB_Unique}. Therefore, if we evaluate summary statistics from cycles in a given MCB, then we obtain inconsistent results depending on which MCB is used. For instance, consider the number of times node 2 in Figure~\ref{fig:MCB_Unique} participates in a cycle. If we compute this statistic using the MCB $\calM_1=\{C_1,C_2\}$ then node 2 participates in two cycles $C_1$ and $C_2$, but if we take $\calM_2=\{C_2,C_3\}$ as an MCB then node 2 participates in only one cycle $C_2$. As mentioned in Ref.~\cite{kolodzikUnique2012}, this is an issue with pattern matching of chemical rings in SMARTS, {a programming library for identifying molecular patterns and properties}~\cite{SMARTS}. Additionally, the OEChem {library for chemistry and cheminformatics} avoids using MCBs in chemical ring perception citing lack of uniqueness, and states ``Smallest Set of Smallest Rings (SSSR) considered Harmful''~\cite{SSSR_Harmful}.

Because of this issue, several works in computational chemistry consider definitions that extend MCBs to larger sets of cycles. We do not review all of these definitions here, and refer the interested reader to Ref.~\cite{bergerCounterexamples2004}. Instead, we review the notion of relevant cycles from Ref.~\cite{vismaraUnion1997}, which serves as a foundation for the definitions discussed in this manuscript. The relevant cycles have two equivalent definitions:
\begin{definition}
	The set of relevant cycles $\RelCyc$ is the union of all MCBs.\label{def:rel}
\end{definition}
\begin{definition}
	A cycle $C\in\CycSp$ is relevant ($C\in\RelCyc$) if and only if it is not generated by smaller cycles, i.e., there does not exist a set of smaller cycles $\calX$, $|C'|<|C|$ for $C'\in\calX$, such that
	\begin{equation}
		C = \bigoplus_{C'\in\calX} C'
	\end{equation}
	where $\bigoplus$ is the series representation of the $\oplus$-operator.
	\label{def:rel_2}
\end{definition}
We include a proof of the equivalence of these definitions after introducing additional theory about the cycle space -- see Corollary~\ref{cor:rel_def_eq} in Sec.~\ref{sub:basis_ex}. For the graph depicted in Figure~\ref{fig:MCB_Unique}, the relevant cycles are the three hexagons $\RelCyc=\{C_1,C_2,C_3\}$. This set of relevant cycles has several useful properties~\cite{bergerCounterexamples2004}. 
\begin{itemize}
	\item \textit{Uniqueness}: The set $\RelCyc$ is unique, so statistics on the relevant cycles are well-defined.
	\item \textit{Completeness}: The relevant cycles can be used to generate the cycle space since $\RelCyc$ contains a cycle basis.
	\item \textit{Computability}: There exists a polynomial time algorithm for computing the set of relevant cycles.~\cite{vismaraUnion1997}.
	\item \textit{Chemical relevance}: 
	The set of relevant cycles contains all building blocks of chemical structures present in molecules. 
\end{itemize}

One downside is that the number of relevant cycles may grow exponentially for certain types of graphs.
For example, in the bracelet-like graph composed of $k$ diamonds chained together into a loop shown in Figure~\ref{fig:intro_graphs} (b). 
All MCBs in this graph are composed of the $k$ diamonds and one large loop of length $3k$. Each large loop either takes the top or bottom path of a given diamond, so the number of large relevant cycles grows exponentially as $2^k$, while the number of nodes grow linearly as $4k$.~\cite{vismaraUnion1997}
	
It is remarkable that the algorithm from Ref.~\cite{vismaraUnion1997} for computing the relevant cycles of a graph runs in polynomial time.
In broad steps, the relevant cycles are computed by (1) generating a candidate set of cycles (a superset of $\RelCyc$) and (2) filtering out the cycles that are irrelevant. The key idea that makes this procedure feasible in polynomial time is that the cycles are grouped into families of equal length. 


\begin{figure}[t]
	\centering
	\includegraphics{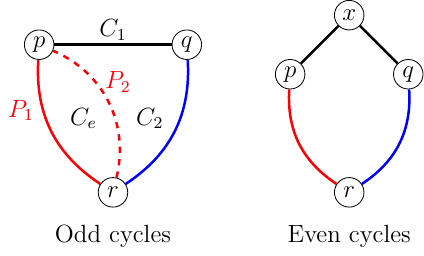}
	\caption{
		Depiction of the descriptors used to organize the relevant cycles into V-families as in Ref.~\cite{vismaraUnion1997}. 
		For an odd length cycle (left), we fix its largest index node $r$ and the edge $(p,q)$ at the opposite end of the cycle. For an even length cycle (right), we fix its largest index node $r$ and the triple $(p,x,q)$ at the opposite end of the cycle. 
		In the left, we also draw the exchange of the path $P_1$ for $P_2$ by adding the cycle $C_e=P_1\oplus P_2$ to $C_1$ to obtain $C_2=C_1\oplus C_e$.
	}
	\label{fig:Vis_Fams}
\end{figure}

\subsubsection{Vismara's families of cycles}


Here we review the construction of cycle families from Ref.~\cite{vismaraUnion1997}. 
The key idea to make the algorithm for computing the relevant cycles polynomial is to group the relevant cycles into a polynomial number of families.
%
At the same time, it has a theoretical drawback that it is attached to a given enumeration of the vertices.

Consider a simple cycle $C$. We select antipodal parts of $C$ to determine the family it belongs to.
First, we fix the node $r$ with largest index in $C$. The largest index node is selected because it is unique up to vertex labels. Second, if $C$ is an odd length cycle, then we fix the edge $(p,q)$ at the opposite end of the cycle. If $C$ has even length, then we fix the triple $(p,x,q)$ at the opposite end of the cycle. In either case, the paths from $r$ to $p$ and $q$ in $C$ are of equal length. See Figure~\ref{fig:Vis_Fams}. We call these families V-families. 

\begin{definition}[Vismara cycle family]\phantom{.}
	
	\begin{itemize}
		\item The odd V-family with descriptors $r,p,q$ is the set of cycles with largest index node $r$ consisting of shortest paths from $r$ to $p$ and $q$ joined by the edge $(p,q)$.
		
		\item The even V-family with descriptors $r,p,x,q$ is the set of cycles with largest index node $r$ consisting of shortest paths from $r$ to $p$ and $q$ joined by the triple $(p,x,q)$.
	\end{itemize}
	

	
	\label{def:vis_fam}
\end{definition}

\noindent All cycles in a V-family are of equal length. We only consider shortest paths from $r$ to $p$ and $q$ because it is a necessary condition for a cycle to be relevant -- see Lemma~\ref{lem:shortest_paths}.

\begin{figure}[t]
	\centering
	\includegraphics{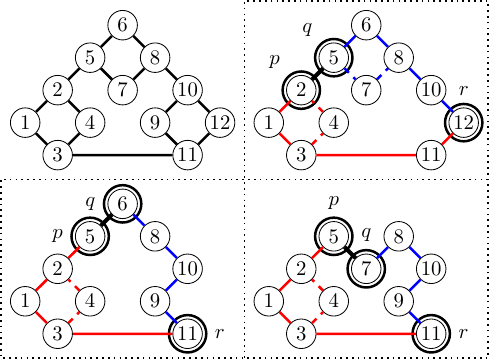}
	\caption{A labeled graph composed of three diamonds chained into a loop. The relevant cycles of this graph are the three diamonds and eight large loops of length nine. For the given node labels, the large cycles form three V-families depicted separately with their descriptors.}
	\label{fig:three_diamonds}
\end{figure}

As an example, consider the graph depicted in Figure~\ref{fig:three_diamonds}. 
This graph has eight large relevant cycles of odd length nine, distinguished by the path they take through each diamond.
Four of these cycles take the path $(10,12,11)$ in the 
diamond $(9,11,12,10)$
making their largest index node $r=12$. 
All four cycles share the same opposing edge $(2,5)$ and belong to the same family. The remaining large relevant cycles take $(10,9,11)$ instead of $(10,12,11)$ in the diamond $(9,11,12,10)$, so they have largest index node $r=11$. The opposing edge for these remaining cycles is $(5,6)$ if they take the path $(5,6,8)$ in the diamond $(5,6,8,7)$ or $(5,7)$ if they take the path $(5,7,8)$. This results in two additional V-families, each of which has two cycles, one for each path in the diamond $(1,2,4,3)$.

Ultimately, splitting cycles into V-families as in Definition~\ref{def:vis_fam} is effective because shortest paths can be exchanged via cycle addition. For instance, let us consider two cycles $C_1$ and $C_2$ in a V-family that differ by taking disjoint paths $P_1$ and $P_2$ from $r$ to $p$ respectively. See, e.g., Figure~\ref{fig:Vis_Fams}, left. Additionally, consider the cycle $C_e$ obtained by merging these paths, given by
\begin{equation}
	C_e \coloneq P_1\oplus P_2.\label{eq:Ce_Path_Merge}
\end{equation}
Then, we exchange $C_1$ for $C_2$ as follows
\begin{equation}
	C_1 = C_2 \oplus C_e\label{eq:Ce_Ex_Vis}
\end{equation}
where the path $P_2$ cancels between $C_2$ and $C_e$ and is replaced by $P_1$. 

In general, one cycle $C_1$ in a family can be exchanged for another cycle $C_2$ in the same family by adding at most two cycles. Specifically, one cycle is used to swap paths from $r$ to $p$, and the other is used to swap paths from $r$ to $q$. Note, these exchange cycles are shorter than the equal length cycles $C_1,C_2$, e.g., for $C_e$ given by Eq.~\eqref{eq:Ce_Path_Merge}
\begin{equation}
	|C_e|\le |P_1|+|P_2|=2|P_1|<|C_1|.\label{eq:Ce_Ex_Len}
\end{equation}
This exchange procedure can be used to show two useful properties for computing cycle families.

First, this can be used to show that the paths from $r$ to $p$ and $q$ in a relevant cycle are shortest paths, because otherwise they can be exchanged for shortest paths. The proof of this claim relies on the fact that the paths from $r$ to $p$ and $q$ are less than half the length of the cycle.

\begin{lemma}[Ref.~\cite{vismaraUnion1997}, Lemma 2] 
	Let $C$ be a relevant cycle and $P$ a path in $C$ that is at most half the length of $C$, $|P|\le|C|/2$. Then, $P$ is a shortest path.
	\label{lem:shortest_paths}
\end{lemma}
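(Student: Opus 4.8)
The plan is to argue by contradiction using the path-exchange idea summarized in Eqs.~\eqref{eq:Ce_Path_Merge}--\eqref{eq:Ce_Ex_Len}. Write $C$ as the edge-disjoint union of $P$ and the complementary arc $Q$, so that $C = P \oplus Q$ and $|Q| = |C| - |P|$, and let $u,v$ be the endpoints shared by $P$ and $Q$. Suppose, for contradiction, that $P$ is \emph{not} a shortest $u$--$v$ path, and let $P'$ be a $u$--$v$ path with $|P'| < |P|$. I would then form the two exchange cycles $C_e \coloneq P \oplus P'$ and $C'' \coloneq Q \oplus P'$, both of which lie in $\CycSp$ as symmetric differences of paths sharing their endpoints. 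Since $P'$ cancels, $C_e \oplus C'' = P \oplus Q = C$, giving a two-term $\oplus$-decomposition of $C$.

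The heart of the argument is the length bookkeeping, which is exactly where the hypothesis $|P| \le |C|/2$ enters. Using $|A \oplus B| \le |A| + |B|$ together with $|P'| < |P| \le |C|/2$ and $|Q| = |C|-|P| \ge |P|$, I obtain
\[
|C_e| \le |P| + |P'| < 2|P| \le |C|, \qquad |C''| \le |Q| + |P'| < |Q| + |P| = |C|,
\]
so both $C_e$ and $C''$ are strictly shorter than $C$, mirroring Eq.~\eqref{eq:Ce_Ex_Len}. Both are nonempty: $C_e \neq \emptyset$ because $|P'| < |P|$ forces $P \neq P'$, and $C'' \neq \emptyset$ because $|P'| < |P| \le |Q|$ forces $Q \neq P'$.

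Finally I would pass from this $\oplus$-decomposition to a decomposition into \emph{simple} cycles in order to invoke Definition~\ref{def:rel_2}. Any element of the cycle space is an edge-disjoint union of simple loops, each no longer than the whole; decomposing $C_e$ and $C''$ in this way, and discarding any loop common to both (it cancels under $\oplus$), yields a set $\calX$ of simple cycles, each of length strictly less than $|C|$, with $\bigoplus_{C'\in\calX} C' = C$. This contradicts the relevance of $C$, completing the proof.

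The main obstacle I anticipate is not the exchange construction itself but the bookkeeping required to guarantee a \emph{strict} length decrease in every piece and to reduce the general cycle-space elements $C_e, C''$ to simple cycles of strictly smaller length; the strictness rests entirely on $|P| \le |C|/2$, since without it the complementary arc $Q$ could be short enough that $C''$ fails to beat $|C|$. A minor point to handle cleanly is that $P'$ may share edges with $P$ or $Q$, but as the bounds use only the triangle-type inequality $|A \oplus B| \le |A| + |B|$, such overlaps only strengthen the estimates.
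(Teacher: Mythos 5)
Your proof is correct and follows essentially the same route as the paper's: the same exchange cycle $C_e = P\oplus P'$, the same complementary cycle (your $C''=Q\oplus P'$ is exactly the paper's $C'=(C\setminus P)\oplus P'$), and the same length bounds hinging on $|P|\le|C|/2$. The final reduction to simple loops is harmless but unnecessary, since the paper's notion of ``cycle'' (Definition~\ref{def:rel_2} quantifies over elements of $\CycSp$, i.e., even-degree edge sets) already admits $C_e$ and $C''$ directly.
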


\begin{proof}
	Suppose towards a contradiction that $P$ is not a shortest path. Let $P'$ be a shortest path with the same endpoints, such that $|P'|<|P|$. Let $C_e$ be the cycle used to exchange paths $P$ and $P'$ given by
	\begin{equation}
		C_e = P\oplus P'.
	\end{equation}
	Let $C'$ be given by
	\begin{equation}
		C' \coloneq C\oplus C_e = (C\setminus P)\oplus P'.
	\end{equation}
	where the second equality uses associativity and that the path $P$ belongs to $C$. Then, our initial cycle is given by the sum
	\begin{equation}
		C = C'\oplus C_e.
	\end{equation}
	Both $C'$ and $C_e$ are strictly smaller than $C$
	\begin{align}
		|C_e| &\le |P|+|P'|< 2|P|\le |C|,\\
		|C'| &\le |C| - |P| + |P'| < |C|.
	\end{align}
	Therefore, $C$ is not relevant by Definition~\ref{def:rel_2}, because it is the sum of shorter cycles. This contradicts our initial assumptions, so $P$ is a shortest path.
\end{proof}


Shortest paths from a given root node $r$ to the other vertices can be obtained by breadth-first search. 
Neglecting some detail -- how to avoid nodes with index greater than $r$ -- the algorithm for generating candidate cycle families is to perform breadth-first search on all vertices $r\in V$ and identify node pairs $p$ and $q$ that form structures as in Fig.~\ref{fig:Vis_Fams}. 
We discuss this algorithm and our modifications to it in Sec.~\ref{sub:Vis_Fam}.


Another benefit of this exchange procedure is that we only need to test if one cycle in a V-family is relevant. In Ref.~\cite{vismaraUnion1997}, a single cycle \textit{prototype} is taken arbitrarily from a V-family. If this prototype is relevant, then all cycles in the V-family are relevant. 
This is computationally feasible to test because there are a polynomial number of cycle prototypes.
In the following lemma, $C_1$ and $C_2$ are cycles in a family, and $\calX$ is a set of two cycles used to exchange the paths from $r$ to $p$ and $q$. The cycles $\calX$ used to exchange paths are shorter than $C_1$ and $C_2$. See Eq.~\eqref{eq:Ce_Ex_Len}.

\begin{lemma}
	Let $C_1,C_2\in\CycSp$ be equal length cycles that differ by smaller cycles, i.e., there exists a set of cycles $\calX$, $|C'|<|C_1|$ for $C'\in\calX$, such that
	\begin{equation}
		C_1 = C_2\oplus\bigoplus_{C'\in\calX} C'.\label{eq:C1_C2_sm_c}
	\end{equation}
	Then, $C_1$ is relevant if and only if $C_2$ is relevant. 
\end{lemma}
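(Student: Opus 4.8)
The plan is to exploit the symmetry of the hypothesis and then reduce directly to Definition~\ref{def:rel_2}, the characterization of relevance by non-decomposability into strictly smaller cycles. First I would observe that the relation between $C_1$ and $C_2$ is symmetric: since $\oplus$ is its own inverse, adding $\bigoplus_{C'\in\calX}C'$ to both sides of Eq.~\eqref{eq:C1_C2_sm_c} yields $C_2 = C_1\oplus\bigoplus_{C'\in\calX}C'$ with the same family $\calX$, and because $|C_1|=|C_2|$ the condition $|C'|<|C_1|$ is identical to $|C'|<|C_2|$. Hence it suffices to prove a single implication, say that if $C_1$ is not relevant then $C_2$ is not relevant; the converse then follows by swapping the roles of $C_1$ and $C_2$.

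Next I would unfold the assumption that $C_1$ is not relevant. By Definition~\ref{def:rel_2} there is a set $\calY$ of cycles, each of length strictly less than $|C_1|$, with $C_1 = \bigoplus_{C''\in\calY}C''$. Substituting this into the symmetrized hypothesis gives
\begin{equation*}
	C_2 = \Big(\bigoplus_{C''\in\calY} C''\Big)\oplus\Big(\bigoplus_{C'\in\calX} C'\Big).
\end{equation*}
Every cycle appearing on the right lies in $\calX$ or $\calY$, so each has length strictly less than $|C_1|=|C_2|$. This exhibits $C_2$ as a symmetric-difference sum of cycles all strictly shorter than $C_2$, which by Definition~\ref{def:rel_2} means $C_2$ is not relevant, completing the argument.

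The one technical point I would be careful about is that Definition~\ref{def:rel_2} asks for a \emph{set} $\calX$ of strictly smaller cycles, whereas naively concatenating the two decompositions produces a multiset: a cycle could occur in both $\calX$ and $\calY$. To package the result as a genuine set I would take $\calZ \coloneq \calX\symdiff\calY$; cycles common to both collections occur an even number of times and cancel under $\oplus$, so $\bigoplus_{C\in\calZ}C = C_2$, while $\calZ\subseteq\calX\cup\calY$ guarantees every element of $\calZ$ is shorter than $C_2$. Because all these cycles are strictly shorter than $C_2$, the cycle $C_2$ itself cannot belong to $\calZ$, so there is no circularity in the decomposition. I expect this set-versus-multiset bookkeeping to be the only place requiring genuine care; the heart of the proof is a one-line substitution, and the equal-length hypothesis $|C_1|=|C_2|$ together with the strict length bounds on $\calX$ and $\calY$ does all the real work. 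The degenerate case $\calX=\emptyset$ forces $C_1=C_2$ and is trivial, and empty cycles play no role since the cycles under consideration are nonempty.
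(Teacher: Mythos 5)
Your proof is correct and follows essentially the same route as the paper's: both exploit the symmetry of Eq.~\eqref{eq:C1_C2_sm_c}, substitute the decomposition of the non-relevant cycle into the exchange identity, and pass to the symmetric difference $\calX\symdiff\calY$ to resolve the set-versus-multiset issue before invoking Definition~\ref{def:rel_2}. The only cosmetic difference is that you prove the implication in the direction $C_1$ not relevant $\Rightarrow$ $C_2$ not relevant first, whereas the paper starts with the opposite direction; your explicit handling of the cancellation and the degenerate cases is, if anything, slightly more careful than the paper's.
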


\begin{proof}
	Let $C_2$ not relevant. We want to show that $C_1$ is not relevant. By Definition~\ref{def:rel_2}, $C_2$ is the sum of smaller cycles $\calY$, $|C'|<|C_2|$ for $C'\in\calY$, such that
	\begin{equation}
		C_2 = \bigoplus_{C'\in\calY}C'.
	\end{equation}
	Then, by Eq.~\eqref{eq:C1_C2_sm_c}, $C_1$ is given by the pair of sums
	\begin{equation}
		C_1 = \bigoplus_{C'\in\calY} C' \oplus \bigoplus_{C''\in\calX}C''.
	\end{equation}
	Cycles in the intersection $C'\in\calX\cap\calY$ cancel in the above double sum, since $C'\oplus C'$. Thus, $C_1$ is given by the single sum
	\begin{equation}
		C_1 = \bigoplus_{C'\in\calX\symdiff\calY} C'
	\end{equation}
	where $\symdiff$ is the symmetric difference $\calX\symdiff\calY = (\calX\setminus\calY)\cup(\calY\setminus\calX)$ for more general sets. Finally, $C_1$ is not relevant by Definition~\ref{def:rel_2}, because it is the sum of smaller cycles $\calX\symdiff\calY\subseteq\calX\cup\calY$. 
	
	To see the reverse statement, we note that the relationship given by Eq.~\eqref{eq:C1_C2_sm_c} is symmetric. In particular, we can add cycles $C'\in\calY$ to both sides of Eq.~\eqref{eq:C1_C2_sm_c} to move them to the left-hand side
	\begin{equation}
		C_1 \oplus \bigoplus_{C'\in\calY} C' = C_2.
	\end{equation}
	By the same argument as above, if $C_1$ is not relevant, then $C_2$ is not relevant.
\end{proof}

\subsection{Lemmas about exchanging basis cycles}
\label{sub:basis_ex}

	Much of the theory in this text relies on the procedure of swapping a given cycle in a basis for another cycle. This notion of basis exchange can be used to define a matroid, but it is easier to understand these properties as a consequence of the cycle space being a vector space. In this section, we state this notion of basis exchange in Lemma~\ref{Lem:Bas_Ex}; construct conditions for a cycle to be relevant in Lemma~\ref{Lem:Relevant_MCB_Test}; and verify that Definitions~\ref{def:rel} and~\ref{def:rel_2} are relevant in Corollary~\ref{cor:rel_def_eq}.


	For these lemmas, it is useful to introduce further notation for expanding a cycle with respect to a basis. Recall from Eq.~\eqref{eq:cyc_exp_format_full}, a cycle $C$ can be expanded as the sum of basis cycles $B_i$ with binary coefficients $b_i$. Thus, $C$ is simply the sum of cycles $B_i$ with coefficients $b_i=1$. We define the following function to output these cycles. 
	\begin{definition}
		Let $\calB$ be a cycle basis. The \emph{expansion operator} $\expand{\calB}{\cdot}$ is the function on cycles $C\in\CycSp$ that outputs the unique subset $\expand{\calB}{C}\subseteq\calB$ that sums to $C$
		\begin{equation}
			C = \bigoplus_{\mathclap{B\in\expand{\calB}{C}}} B,\quad \forall C\in\CycSp.
		\end{equation}
		\label{def:ex_op}
	\end{definition}

	In the following lemma we show a cycle $C$ can be exchanged for cycles in its expansion $\expand{\calB}{C}$. {This well-known lemma has many equivalent forms, and is often referred to as the matroid property of the cycle space, see e.g., Proposition 4 in Ref.~\cite{gleissInterchangeability2000}.}

\begin{lemma}[Basis exchange]
	Let $\calB$ be a cycle basis, $C\in\CycSp$ be a cycle, and $B\in\expand{\calB}{C}$ be an arbitrary cycle in the expansion of $C$ with respect to $\calB$. If we replace $B$ with $C$ in $\calB$ then we obtain a new cycle basis $\calB' {=}\, (\calB{\setminus} \{B\}){\cup}\{C\}$.\nolinebreak 
	\label{Lem:Bas_Ex}
\end{lemma}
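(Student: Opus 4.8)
The plan is to lean entirely on the fact, established earlier, that $\CycSp$ is a finite-dimensional vector space over $GF(2)$ of dimension $\nu$, and that a cycle basis is precisely a basis of this space. Since $\calB' = (\calB\setminus\{B\})\cup\{C\}$ has at most $\nu$ elements, I do not need to verify both spanning and independence: in a $\nu$-dimensional space, a spanning set of at most $\nu$ vectors is automatically a basis. So I would reduce the whole statement to showing that $\calB'$ spans $\CycSp$.

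First I would observe that it suffices to show the single removed cycle $B$ lies in $\Span(\calB')$. Every cycle in $\calB\setminus\{B\}$ already belongs to $\calB'$, so once $B\in\Span(\calB')$ we get $\calB\subseteq\Span(\calB')$, and since $\calB$ is a basis this yields $\CycSp=\Span(\calB)\subseteq\Span(\calB')$. To produce $B$ explicitly, I would use the expansion operator: by Definition~\ref{def:ex_op}, $C=\bigoplus_{B'\in\expand{\calB}{C}}B'$, and the hypothesis $B\in\expand{\calB}{C}$ lets me split off that summand,
\[
	C = B \;\oplus\!\! \bigoplus_{\mathclap{B'\in\expand{\calB}{C}\setminus\{B\}}} B'.
\]
Adding $B$ to both sides and using that each cycle is its own additive inverse, $B\oplus B=\emptyset$, I solve for $B$,
\[
	B = C \;\oplus\!\! \bigoplus_{\mathclap{B'\in\expand{\calB}{C}\setminus\{B\}}} B'.
\]
Every cycle on the right-hand side is an element of $\calB'$ — the cycle $C$ by construction, and each remaining $B'\in\expand{\calB}{C}\setminus\{B\}\subseteq\calB\setminus\{B\}$ — so $B\in\Span(\calB')$, which completes the spanning argument.

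Finally I would close the cardinality loop: a spanning set of a $\nu$-dimensional space must contain at least $\nu$ distinct vectors, while $\calB'$ contains at most $\nu$, forcing $|\calB'|=\nu$ and hence that $\calB'$ is a basis. I expect the only delicate point — the ``hard part,'' such as it is — to be this bookkeeping over $GF(2)$: confirming the rearrangement from $C$ to $B$ is valid via the self-inverse property, and noting that adding $C$ genuinely enlarges the set rather than duplicating an untouched basis cycle (were $C$ equal to some $B_j\in\calB\setminus\{B\}$, then $\calB'$ would have only $\nu-1$ elements and could not span). Conveniently, the spanning-plus-dimension argument settles this automatically, since a set of fewer than $\nu$ vectors cannot span $\CycSp$.
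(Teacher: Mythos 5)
Your proof is correct and follows essentially the same route as the paper's: both hinge on the identity $B = C \oplus \bigoplus_{B'\in\expandd{\calB}{C}\setminus\{B\}} B'$ to conclude that $\calB'$ spans $\CycSp$. The only difference is the finish: the paper verifies linear independence of $\calB'$ directly (if $C$ were dependent on $\calB\setminus\{B\}$, the same identity would make $B$ dependent on $\calB\setminus\{B\}$, contradicting that $\calB$ is a basis), whereas you invoke the cardinality bound $|\calB'|\le\nu$ together with the fact that a spanning set of a $\nu$-dimensional space has at least $\nu$ elements — an equally valid and slightly slicker closing step.
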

\begin{proof}
	To start, we expand $C\in\CycSp$ with respect to the basis $\calB$ as follows
	\begin{equation}
		C = B_1{\oplus} B_2 {\oplus}\hdots {\oplus} B_k, \enspace B_i{\in}\calB\text{, }1{\le} i{\le} k.\label{eq:C_exp_reg_BasEx}
	\end{equation}
	Note, in the above equation we fix $\expand{\calB}{C}=\{B_1,\hdots,B_k\}$. We want to show that if we swap $B_i$ with $C$ in $\calB$, then we obtain a new cycle basis. Without loss of generality, we will show that $\calB'\coloneq (\calB\setminus\{B_1\})\cup\{C\}$ is a cycle basis.
	To show this, we add $C$ and $B_1$ to both sides of Eq.~\eqref{eq:C_exp_reg_BasEx} to obtain
	\begin{equation}
		B_1 = C\oplus B_2\oplus\hdots\oplus B_k.\label{eq:B1_exp_reg_BasEx}
	\end{equation}
	Equation~\eqref{eq:B1_exp_reg_BasEx} shows that $B_1$ is in the span of $\calB'$, and that our original set $\calB$ is contained in the span of $\calB'$. $\calB$ is a cycle basis, so $\calB'$ must span the full cycle space. $C$ is linearly independent of $\{B_2,\hdots,B_k\}$, otherwise $B_1$ would be in the span of $\{B_2,\hdots,B_k\}$ by Eq.~\eqref{eq:B1_exp_reg_BasEx}. Thus, $\calB'$ is a basis.
\end{proof}

	
	{In the following lemma, we introduce an alternate definition of relevance in terms of the expansion of a cycle with respect to a MCB. We will use this lemma as a backbone for our theoretical results in this manuscript and for testing the relevance of cycles in V-families.
	}

\begin{lemma}[Relevance criterion]
	Let $C\in\CycSp$ be a cycle, $\calM$ an MCB, and
	$$L = \max\{|C'|:C'\in\expand{\calM}{C}\}$$
	the length of the largest cycle in the expansion of $C$ with respect to $\calM$.
	Then, the following are true
	\begin{enumerate}
		\item $L\le |C|$, i.e., no cycles in $\expand{\calM}{C}$ are larger than $C$,
		\item $L=|C|$ if and only if $C$ is relevant $C\in\mathcal{C_R}$. Furthermore, if $C\in\mathcal{C_R}$, then for any $C'\in\expand{\calM}{C}$, $|C'|=|C|$, the set $\calM'=(\calM\setminus\{C'\})\cup\{C\}$ is an MCB.
	\end{enumerate}
	\label{Lem:Relevant_MCB_Test}
\end{lemma}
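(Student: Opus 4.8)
The plan is to get Part 1 directly from the minimality of $\calM$ together with the basis-exchange lemma, and then to get the equivalence in Part 2 by playing the expansion operator against Definition~\ref{def:rel_2}. Throughout I would read ``$C$ is relevant'' as Definition~\ref{def:rel_2} (not a sum of strictly shorter cycles), since that is the self-contained reading available at this point; the union-of-MCBs characterization of Definition~\ref{def:rel} is exactly what the subsequent Corollary~\ref{cor:rel_def_eq} will read off from this lemma and its ``furthermore'' clause.

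For Part 1 I would argue by contradiction. Suppose some $C' \in \expand{\calM}{C}$ has $|C'| > |C|$. Since $C' \in \expand{\calM}{C}$, Lemma~\ref{Lem:Bas_Ex} lets me replace $C'$ by $C$ to obtain a new cycle basis $\calM' = (\calM \setminus \{C'\}) \cup \{C\}$, whose cost satisfies ${\sf Cost}(\calM') = {\sf Cost}(\calM) - |C'| + |C| < {\sf Cost}(\calM)$, contradicting minimality of $\calM$. Hence $L \le |C|$. The same computation run with $|C'| = |C|$ gives instead ${\sf Cost}(\calM') = {\sf Cost}(\calM)$, so in that case $\calM'$ is again minimal; I would record this observation here to reuse it for the ``furthermore'' clause.

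For Part 2, together with Part 1 it suffices to establish both directions of the equivalence between $L = |C|$ and the relevance of $C$. One direction is immediate: if $L < |C|$ then $C = \bigoplus_{B \in \expand{\calM}{C}} B$ exhibits $C$ as a sum of cycles all strictly shorter than $C$, so $C$ is not relevant by Definition~\ref{def:rel_2}; contrapositively, relevance forces $L = |C|$. For the converse I would prove the contrapositive: assume $C$ is not relevant, so $C = \bigoplus_{D \in \calX} D$ with every $|D| < |C|$. Expanding each $D$ in the basis $\calM$ and substituting gives $C = \bigoplus_{D \in \calX} \bigoplus_{B \in \expand{\calM}{D}} B$. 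Because $GF(2)$-expansions are unique, every $B \in \expand{\calM}{C}$ must occur (an odd number of times) among the $\expand{\calM}{D}$, i.e. $\expand{\calM}{C} \subseteq \bigcup_{D \in \calX} \expand{\calM}{D}$. Applying Part 1 to each $D$ gives $|B| \le |D| < |C|$ for every such $B$, whence $L < |C|$; this establishes that $L = |C|$ implies $C$ is relevant. Finally, when $C$ is relevant we have $L = |C|$, so some $C' \in \expand{\calM}{C}$ with $|C'| = |C|$ exists, and the cost computation from Part 1 shows $\calM' = (\calM \setminus \{C'\}) \cup \{C\}$ is an MCB, giving the ``furthermore''.

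The routine steps are the two cost computations. The step that needs care is the double-expansion bookkeeping in the converse of Part 2: I must invoke uniqueness of the $GF(2)$-expansion to conclude the set inclusion $\expand{\calM}{C} \subseteq \bigcup_{D} \expand{\calM}{D}$, rather than only controlling the aggregate multiset of basis cycles. The one conceptual subtlety is the meaning of ``relevant'': I deliberately argue against Definition~\ref{def:rel_2} to keep the lemma non-circular, leaving the identification with the union of all MCBs (Definition~\ref{def:rel}) to Corollary~\ref{cor:rel_def_eq}, which extracts it from the ``furthermore'' clause proved here.
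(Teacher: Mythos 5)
Your proposal is correct, and Part~1 together with the ``furthermore'' clause (basis exchange plus the cost computation, run once with strict inequality and once with equality) is exactly the paper's argument. Where you genuinely diverge is Part~2. The paper reads ``relevant'' as Definition~\ref{def:rel} throughout: for the forward direction it takes an MCB $\calM'$ containing $C$, picks a $C'\in\expand{\calM}{C}$ that is linearly independent of $\calM'\setminus\{C\}$, and runs a second cost computation inside $\calM'$ to force $|C'|=|C|$; the algebraic double-expansion argument you use for the converse (expand each $D\in\calX$ in $\calM$, invoke uniqueness of the $GF(2)$ expansion to get $\expand{\calM}{C}\subseteq\bigcup_{D}\expand{\calM}{D}$, then apply Part~1 to each $D$) is precisely what the paper defers to the $(\Leftarrow)$ direction of Corollary~\ref{cor:rel_def_eq}. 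You instead take Definition~\ref{def:rel_2} as the working meaning of relevance inside the lemma and pull that double-expansion argument forward, leaving Corollary~\ref{cor:rel_def_eq} to read the Definition~\ref{def:rel} characterization off the ``furthermore'' clause. Your redistribution is coherent and arguably cleaner: it avoids reasoning about an unspecified MCB containing $C$ and the slightly delicate selection of a summand independent of $\calM'\setminus\{C\}$, and your Corollary~\ref{cor:rel_def_eq} becomes nearly immediate in both directions. The cost is that, until that corollary is in place, the lemma you have proved is literally about Definition~\ref{def:rel_2} rather than about membership in $\RelCyc$ as the statement is written; you flag this explicitly, so there is no gap, only a different ordering of the same logical content.
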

\begin{proof}
	(1) Let $C'\in\expand{\calM}{C}$ be an arbitrary cycle in the expansion of $C$ with respect to $\calM$. By Lemma~\ref{Lem:Bas_Ex}, $\calB=(\calM\setminus \{C'\})\cup\{C\}$ is a cycle basis. The sum of cycle lengths in $\calB$ is
	\begin{equation}
		\sum_{B\in\calB} |B| = \sum_{B\in\calM} |B| - |C'| + |C|.
	\end{equation}
	$\calM$ is an MCB, so the sum of cycle lengths in $\calB$ is greater than or equal to the sum of cycle lengths in $\calM$, and by the above equation
	\begin{equation}
		\begin{split}
		\sum_{B\in\calM} |B| - |C'| + |C| \ge \sum_{B\in\calM} |B| \quad \\\longrightarrow \quad |C| \ge |C'|.
		\end{split}
	\end{equation}
	Thus, $C$ is not smaller than any of the cycles in its expansion with respect to $\calM$ as claimed.
	
	(2, $\Rightarrow$) 
		Let $\calM'$ be an MCB containing $C$. 
		Let $C'\in\expand{\calM}{C}$ be independent of $\calM\setminus\{C\}$. By item 1, $|C'|\le |C|$. 
		Then, $\calB=\calM'\setminus\{C\}\cup\{C'\}$ is another cycle basis.
		\begin{equation}
			\sum_{\mathclap{B\in\calM'}} |B| \le \sum_{\mathclap{B\in\calB}} |B| = \sum_{\mathclap{B\in\calM'}} |B| - |C| + |C'|.
		\end{equation}
		Hence $|C'|\ge|C|$. Thus $|C'|=|C|$.
\end{proof}

\begin{corollary}[{Ref~\cite{vismaraUnion1997}, Lemma 1}]
	A cycle $C$ is not relevant if and only if it is generated by smaller cycles. In other terms, Definition~\ref{def:rel_2} is equivalent to Definition~\ref{def:rel}.
	\label{cor:rel_def_eq}
\end{corollary}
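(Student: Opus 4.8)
The plan is to deduce the corollary directly from the Relevance criterion (Lemma~\ref{Lem:Relevant_MCB_Test}), which already bridges Definition~\ref{def:rel} and the expansion of $C$ against a fixed MCB $\calM$. Fix any MCB $\calM$ and set $L = \max\{|C'| : C'\in\expand{\calM}{C}\}$ as in that lemma. Item~1 gives $L\le |C|$ unconditionally, and item~2 states $L=|C|$ exactly when $C\in\RelCyc$ in the sense of Definition~\ref{def:rel}. Combining these, $C$ fails to be relevant (Definition~\ref{def:rel}) if and only if $L<|C|$. So the whole corollary reduces to the single equivalence ``$L<|C|$ if and only if $C$ is generated by strictly smaller cycles,'' and the right-hand side is precisely the condition in Definition~\ref{def:rel_2}.

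For the easy direction, I would assume $L<|C|$ and expand $C$ against $\calM$ using Definition~\ref{def:ex_op}:
\begin{equation*}
	C = \bigoplus_{\mathclap{B\in\expand{\calM}{C}}} B .
\end{equation*}
Every basis cycle $B$ in this sum satisfies $|B|\le L<|C|$, so taking $\calX = \expand{\calM}{C}$ exhibits $C$ as a sum of strictly smaller cycles; hence $C$ is generated by smaller cycles. (The degenerate case $C=\emptyset$ is vacuous, since the cycles under consideration are nonempty loops.)

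For the converse, I would assume $C$ is generated by smaller cycles, $C=\bigoplus_{C'\in\calX}C'$ with $|C'|<|C|$ for each $C'\in\calX$, and track the expansion operator through the sum. Since $\expand{\calM}{\cdot}$ is linear over $GF(2)$, a basis cycle lies in $\expand{\calM}{C}$ precisely when it lies in $\expand{\calM}{C'}$ for an odd number of $C'\in\calX$; in particular every $B\in\expand{\calM}{C}$ lies in $\expand{\calM}{C'}$ for at least one $C'\in\calX$. Applying item~1 of Lemma~\ref{Lem:Relevant_MCB_Test} to that summand gives $|B|\le|C'|<|C|$. Therefore every cycle in $\expand{\calM}{C}$ is strictly shorter than $C$, so $L<|C|$, completing the equivalence. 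This cancellation-and-length-bookkeeping step is the same maneuver used in the preceding (unnamed) lemma, where a double sum is collapsed to a symmetric difference of index sets.

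The hard part will be exactly this converse: verifying that the expansion of a $GF(2)$-sum is the symmetric difference of the expansions, and that this parity argument forces each surviving basis cycle to be inherited from some strictly smaller summand (whose length then bounds it via item~1). Everything else is an immediate readout of Lemma~\ref{Lem:Relevant_MCB_Test}, so once the length bookkeeping in the converse is pinned down, the corollary follows.
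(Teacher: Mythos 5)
Your proposal is correct and follows essentially the same route as the paper: both directions are read off from Lemma~\ref{Lem:Relevant_MCB_Test}, with the forward direction taking $\calX=\expand{\calM}{C}$ and the converse using the containment $\expand{\calM}{C}\subseteq\bigcup_{C'\in\calX}\expand{\calM}{C'}$ (your parity/symmetric-difference phrasing is just a more explicit justification of that inclusion) together with item~1 to bound the lengths.
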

\begin{proof}
	($\Rightarrow$) Let $C$ not be relevant. Let $\calM$ be a given MCB. $C$ is the sum of shorter cycles
	\begin{equation}
		C = \bigoplus_{B\in\expand{\mathcal{M}}{C}} B
	\end{equation}
	where $|B|<|C|$ for $B\in\expandd{\calM}{C}$ by Lemma~\ref{Lem:Relevant_MCB_Test}.
	
	($\Leftarrow$) Let $C$ be the sum of smaller cycles $\calX$, i.e., $C=\bigoplus_{C'\in\calX}C'$ where $|C'|<|C|$ for $C'\in\calX$. Then, the expansion of $C$ with respect to an MCB $\calM$ satisfies
	\begin{equation}
		\expand{\calM}{C} \subseteq \bigcup_{\mathclap{C'\in\calX}} \expand{\calM}{C'}.
	\end{equation}
	By construction of $\calX$ and Statement 1 of Lemma~\ref{Lem:Relevant_MCB_Test}, the cycles in $\expand{\calM}{C}$ are smaller 
	\begin{equation}
		|B|\le |C'|<|C|,\quad C'\in\calX,B\in\expand{\calM}{C'}
	\end{equation}
	as claimed.
\end{proof}

\subsection{Partitions of the relevant cycles}
\label{sub:background_partitions}

The V-families output by Vismara's algorithm~\cite{vismaraUnion1997} are often more useful than the relevant cycles themselves. 
Unfortunately, these families are not unique, as they rely on a particular enumeration of the vertices. 
Here we review two works that extend V-families to unique partitions of the relevant cycles~\cite{gleissInterchangeability2000,kolodzikUnique2012}.


Gleiss {et al.}~\cite{gleissInterchangeability2000} extend the V-families to include cycles that can be obtained by the basis exchange property. They achieve this with the following.



\begin{definition}[Ref.~\cite{gleissInterchangeability2000}, Definition 6]
	Two relevant cycles $C_1,C_2\in\RelCyc$ of equal length $|C_1|=|C_2|$ are \emph{interchangeable} if there exist cycles $\calX\subset\RelCyc$, $C_1,C_2\notin\calX$, such that
	\begin{enumerate}
		\item $\calX\cup\{C_2\}$ is a linearly independent set of relevant cycles,
		\item $C_1=C_2\oplus \bigoplus_{C'\in\calX} C'$,
		\item $|C'|\le |C_1|$ for $C'\in\calX$.
	\end{enumerate}
	\label{def:intchg}
\end{definition}

In words, $C_1$ and $C_2$ are interchangeable if they are of equal length, and $C_1$ is a sum of $C_2$ and cycles of smaller or equal length.

By linear independence (condition 1), there exists a basis $\calB_2$ such that $\calX\cup\{C_2\}\subseteq\calB_2$. 
The expansion of $C_1$ is $\expand{\calB_2}{C_1} = \calX\cup\{C_2\}$ by condition 2.
By Lemma~\ref{Lem:Bas_Ex}, the exchange $\calB_1=(\calB_2\setminus\{C_2\})\cup\{C_1\}$ is a basis.

Interchangeability is an equivalence relation that partitions the relevant cycles into classes. 
Cycles in a V-family are interchangeable, so the number of interchangeability classes is polynomial in number.
For the chained diamonds in Figure~\ref{fig:three_diamonds}, all large relevant cycles belong to the same interchangeability class.
Interchangeability classes are tricky to compute because they can not be obtained using a single cycle prototype from each V-family~\cite{gleissInterchangeability2000}. With additional theory, interchangeability classes can be computed in polynomial time~\cite{bergerComputing2017}.



\begin{figure*}
	\centering
	\includegraphics{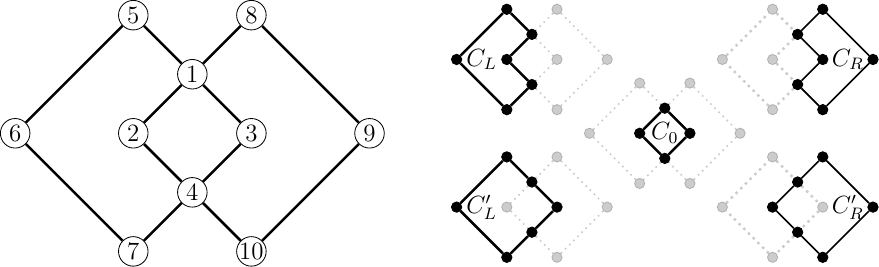}
	\caption{ (Adapted from Ref.~\cite{gleissInterchangeability2000}, Figure 4). A graph consisting of two overlapping diamonds. The graph has five relevant cycles $\RelCyc=\{C_L,C_L',C_0,C_R,C_R'\}$, depicted separately. There are four MCBs $\calM_{1,1}=\{C_L,C_0,C_R\}$, $\calM_{1,2}=\{C_L,C_0,C_R'\}$, $\calM_{2,1}=\{C_L',C_0,C_R\}$, $\calM_{2,2}=\{C_L',C_0,C_R'\}$, where each MCB has one left cycle in $\{C_L,C_L'\}$, one right cycle in $\{C_R,C_R'\}$, and center cycle $C_0$.
	}
	\label{fig:overlapping_diamonds}
\end{figure*}

$C_1,C_2$ being interchangeable does not guarantee the existence of an MCB $\calM_2$ where $\calM_1=(\calM_2\setminus\{C_2\})\cup\{C_1\}$ is also minimal. To see this, consider the graph in Figure~\ref{fig:overlapping_diamonds}.
First, we observe that $C_L = C_0\oplus C_L'$ and $C_0 = C_L \oplus C_L'$. Similarly, for the right cycles
\begin{equation}
	\begin{split}
	C_R = C_0\oplus C_R' &=( {C_L \oplus C_L'}) \mathrel{\oplus} C_R' \\&= C_L \oplus (C_L'\oplus C_R')
	\end{split}
	\label{eq:ind_ex}
\end{equation}
where $|C_L|=|C_L'|=|C_R'|=|C_R|$. 
Thus, $C_L$ and $C_R$ are interchangeable, as in Definition~\ref{def:intchg}, where $\calX=\{C_L',C_R'\}$.
However, each MCB has exactly one left and right cycle, so we cannot exchange $C_L$ for $C_R$ between MCBs.

{The interchangeability relation splits cycles into classes that may be too coarse for some applications.~\cite{kolodzikUnique2012}} 
As an illustrative example, consider the cube graph, where the relevant cycles are the six face cycles $F_1,F_2,\hdots,F_6$. Any face is the sum of the other face cycles, e.g.,
\begin{equation}
	F_6 = F_1 \oplus  F_2\oplus F_3 \oplus F_4 \oplus F_5.\label{eq:cube_face_intchg}
\end{equation}
See Figure~\ref{fig:cube}. 
Thus, all face cycles belong to the same interchangeability class. 
Kolodzik {et al.} \cite{kolodzikUnique2012} argue that this interchangeability class is too coarse and propose the face cycles should be placed in separate groups. They achieve this using a more restrictive pairwise relation that closely resembles the relation of cycles in Vismara's families.

\begin{figure}
	\centering
	\includegraphics{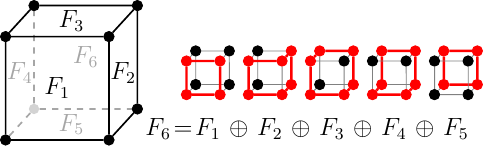}
	\caption{Depiction of the cube graph with labeled face cycles $F_1,F_2,\hdots,F_6$. The partial sums of $ F_1\oplus F_2\oplus\hdots \oplus F_5$ are depicted in red to show how they sum to $F_6$.}
	\label{fig:cube}
\end{figure}


\begin{definition}[Ref.~\cite{kolodzikUnique2012}, Definition 1]
	Let $C_1,C_2\in\RelCyc$, where $|C_1|=|C_2|$, then $C_1$ and $C_2$ are unique ring family (URF)-pair-related if
	\begin{enumerate}
		\item $C_1$ and $C_2$ share at least one common edge
		\item There exists a set of strictly {smaller} cycles $\calX$ such that $C_1 = C_2\oplus \bigoplus_{C'\in \calX}C'$.
	\end{enumerate}
	\label{def:URF_Relation}
\end{definition}

The key distinction here is the cycles $\calX$ used to exchange $C_1$ and $C_2$ are strictly smaller. The faces of the cube are of the same length, so Eq.~\eqref{eq:cube_face_intchg} does not satisfy the URF-pair-relation. 
For the graph composed of chained diamonds in Figure~\ref{fig:three_diamonds}, the large cycles are URF-pair-related because they are exchanged by adding shorter diamonds. More generally, cycles in a V-family satisfy the URF-pair-relation.

The URF-pair-relation is not transitive, so it is not an equivalence relation. Instead, the URFs are given by the transitive closure of the URF-pair-relation. 
{The URFs can be computed in polynomial time using the V-families.~\cite{kolodzikUnique2012}}

\section{Theory: Concepts}
\label{sec:Theory}

\begin{figure}[t]
	\centering
	\includegraphics{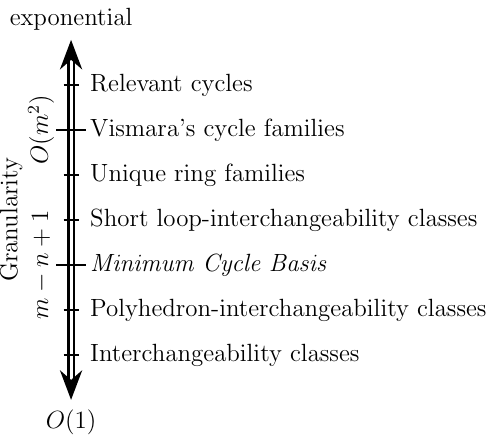}
	\caption{
		Relative granularity of each partition of the relevant cycles. The more granular partitions are refinements of the coarser ones. The axis represents the number of sets in each partition. Notably, a {\tt pi} class is a collection of {\tt sli} classes, and a {\tt sli} class is a collection of V-families. 
		An MCB is not a partition of the relevant cycles, but it is useful for comparing the granularity of each partition.
		A {\tt pi} class has a fixed number of cycles in each MCB;
		a {\tt sli} class contains at most one cycle in any given MCB. 
	}
	\label{fig:partition_granularity}
\end{figure}

In this section, we introduce two partitions of the relevant cycles and related theory. The definitions of these partitions are inspired by those in Refs.~\cite{gleissInterchangeability2000} and~\cite{kolodzikUnique2012}. We summarize the granularity of these partitions in Figure~\ref{fig:partition_granularity}.

At a coarse level, we define the polyhedron-interchangeability ({\tt pi}) relation, 
a modification of the interchangeability relation from Ref.~\cite{gleissInterchangeability2000}.
Unlike the interchangeability relation, the {\tt pi} relation is defined using the basis exchange property between MCBs.
This allows us to more precisely characterize the lack of uniqueness in MCBs.
We define the {\tt pi} relation and elaborate on its advantages in Section~\ref{sub:DICE_Def}. 
Also, we introduce a physical interpretation of pairs of cycles that are {\tt pi-interchangeable} as being shared faces of an abstract polyhedron. We describe this in Section~\ref{sub:Poly}. Thus, we suggest the intuition to interpret {\tt pi} classes as polyhedron clusters.



At a granular level, we define the short loop-interchangeability {(\tt sli)} relation, which modifies the URFs from Ref.~\cite{kolodzikUnique2012}. In particular, the {\tt sli} relation is equivalent to the URF-pair-relation except that cycles do not need to share edges. Our definition is more amenable to theoretical results, e.g., {\tt sli} is an equivalence relation. In computation, the {\tt sli} classes are used in place of cycles since they are polynomial in number. The {\tt sli} relation is introduced in Section~\ref{sub:TICE_def}.

\subsection{Polyhedron-interchangeability: definition and basic concepts}
\label{sub:DICE_Def}

Here, we define and investigate the {\tt pi} relation. 
\begin{definition}
	$C_1,C_2\in\RelCyc$ are \emph{polyhedron-interchangeable} if there exists an MCB $\calM_2$, $C_2\in\calM_2$, such that $\calM_1=(\calM_2\setminus\{C_2\})\cup\{C_1\}$ is also an MCB.
	\label{def:dir_intchg}
\end{definition}
\noindent We define the operator $C_1\pInt C_2$ to denote $C_1$ is polyhedron-interchangeable for $C_2$.

An equivalent definition was introduced in Ref.~\cite{gleissInterchangeability2000} as a ``stronger interchangeability relation'', but it was not analyzed in detail. In particular, they argue this relation should be avoided because it is not symmetric. However, we find this is incorrect and that {\tt pi} is an equivalence relation. Furthermore, the {\tt pi} relation has two important benefits:
\begin{itemize}
	\item \emph{(Interpretability)} Two cycles $C_1$ and $C_2$ such that $C_1{\pInt}\, C_2$ are related by the basis exchange property between MCBs. 
	This is not the case for interchangeability.
	$C_L$ and $C_R$ in Figure~\ref{fig:overlapping_diamonds} are interchangeable by Definition~\ref{def:intchg}, but they are not related by the exchange of cycles in MCBs.
	\item \emph{(Computability)} The {\tt pi} classes are computed in polynomial time using the individual cycle prototypes from the V-families. See Theorem~\ref{Thm:DirInt_Comp} and the following discussion. Interchangeability classes are also computable in polynomial time, but they require additional theory~\cite{bergerComputing2017}.
\end{itemize}



\subsubsection{Theoretical properties}
\label{sub:DICE_Props}

\begin{figure*}
	\centering
	\includegraphics[width=6.5in]{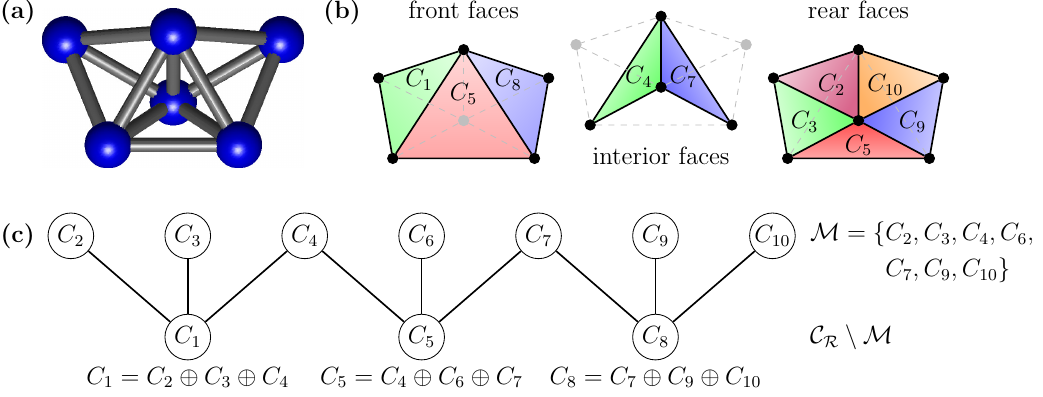}
	\caption{
		An example showing relevant cycles and the {\tt pi} classes.
		\textbf{(a)} Graph with 6 nodes composed of 3 overlapping tetrahedra. 
		\textbf{(b)} Depiction of the relevant cycles $\RelCyc=\{C_1,\hdots,C_{10}\}$. 
		\textbf{(c)} Bipartite graph representation used to compute the {\tt pi} classes.
	}
	\label{fig:DICE_alg}
\end{figure*}

Here, we state our theoretical results for the {\tt pi} relation and their implications. 


\begin{theorem}
	{\tt pi} is an equivalence relation on $\mathcal{C}_{\mathcal{R}}$.
	\label{Thm:DirInt_Eq_Rel}
\end{theorem}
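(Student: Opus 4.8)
The plan is to reduce the statement to a standard structural fact about minimum-weight bases of a matroid. First I would record that $\pInt$ relates only cycles of equal length: if $C_1\pInt C_2$ is witnessed by $\calM_2$ with $\calM_1=(\calM_2\setminus\{C_2\})\cup\{C_1\}$ an MCB, then $\mathsf{Cost}(\calM_1)-\mathsf{Cost}(\calM_2)=|C_1|-|C_2|$, which must vanish since both bases are minimal. Hence it suffices to prove that $\pInt$ is an equivalence relation on the set $R_\ell$ of relevant cycles of each fixed length $\ell$. Reflexivity is immediate: every relevant cycle lies in some MCB (Definition~\ref{def:rel}), and the trivial swap $(\calM\setminus\{C\})\cup\{C\}=\calM$ witnesses $C\pInt C$. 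Symmetry is also immediate: if $\calM_1=(\calM_2\setminus\{C_2\})\cup\{C_1\}$ is an MCB with $C_1\ne C_2$, then equality of sizes forces $C_1\notin\calM_2$, so $C_1\in\calM_1$, $C_2\notin\calM_1$, and $(\calM_1\setminus\{C_1\})\cup\{C_2\}=\calM_2$ is an MCB, giving $C_2\pInt C_1$.

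The substance is transitivity, and I would handle it by passing to the quotient space $Q_\ell=V_{\le\ell}/V_{<\ell}$, where $V_{\le\ell}$ and $V_{<\ell}$ are the spans inside $\CycSp$ of all cycles of length $\le\ell$ and $<\ell$, and $\pi$ is the projection. By Corollary~\ref{cor:rel_def_eq}, a relevant cycle $C\in R_\ell$ is not a sum of shorter cycles, so $\pi(C)\ne 0$. Using the exchange/cost machinery already in place — Lemma~\ref{Lem:Bas_Ex} together with the argument of Lemma~\ref{Lem:Relevant_MCB_Test} — I would show that for any MCB $\calM$ the cycles $\calM^{<\ell}:=\{B\in\calM:|B|<\ell\}$ form a basis of $V_{<\ell}$ and the images $\pi(\calM^\ell)$ of the length-$\ell$ cycles $\calM^\ell$ form a basis of $Q_\ell$; conversely, any set $\mathcal{A}\subseteq R_\ell$ with $\pi(\mathcal{A})$ a basis of $Q_\ell$ is the length-$\ell$ part of some MCB (extend a fixed $\calM_0^{<\ell}\cup\mathcal{A}$ greedily by longer cycles; the result has the length profile common to all MCBs, hence is minimal). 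This identifies the length-$\ell$ parts of MCBs with the bases of the linear matroid $\mathcal{Q}$ on ground set $R_\ell$ whose independent sets are those mapping to independent sets of $Q_\ell$.

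Next I would translate $\pInt$ into matroid language. Unwinding Definition~\ref{def:dir_intchg} through Lemmas~\ref{Lem:Bas_Ex} and~\ref{Lem:Relevant_MCB_Test}, for distinct $C_1,C_2\in R_\ell$ the statement $C_1\pInt C_2$ is equivalent to the existence of a basis $\mathcal{A}=\calM^\ell$ of $\mathcal{Q}$ with $C_2\in\mathcal{A}$ and $C_2\in\expand{\calM}{C_1}$; since shorter cycles die under $\pi$, this says exactly that $C_2$ lies in the fundamental circuit of $C_1$ with respect to $\mathcal{A}$, i.e. that $C_1$ and $C_2$ lie in a common circuit of $\mathcal{Q}$ (the reverse direction follows by extending $K\setminus\{C_1\}$ to a basis for any common circuit $K$). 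With this reformulation, transitivity of $\pInt$ is precisely the classical theorem that the relation ``equal, or contained in a common circuit'' is an equivalence relation on a matroid, with classes equal to the connected components; this is proved by strong circuit elimination.

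I expect the main obstacle to be exactly this transitivity step. The hypotheses $C_1\pInt C_2$ and $C_2\pInt C_3$ are witnessed by two a priori different MCBs, and in general no single MCB exhibits all three expansions simultaneously — the cube graph of Figure~\ref{fig:cube}, where swapping $F_i$ for $F_j$ forces a basis omitting $F_j$, shows that one genuinely must route through an intermediate cycle. The matroid reduction isolates this difficulty as strong circuit elimination on $\mathcal{Q}$; alternatively one could argue directly at the level of cycles by combining the two witnessing MCBs and applying Lemma~\ref{Lem:Relevant_MCB_Test} to a carefully chosen exchange, but this would amount to reproving the circuit-elimination step by hand.
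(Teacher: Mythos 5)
Your proof is correct, but it takes a genuinely different route from the paper's. The paper argues entirely inside the cycle space over $GF(2)$: it introduces \emph{mutual relevance} (extendability of a set of cycles to a common MCB), characterizes it in Lemma~\ref{Lem:mut_rel_2} as ``no subset sums to strictly shorter cycles,'' and proves transitivity by explicitly merging the two witnessing sets $\calX\cup\{C_2\}$ and $\calY$ into one mutually relevant set $\calZ_k$, embedding it in an MCB, and performing a single exchange to exhibit $C_3\in\expand{\calM}{C_1}$. Your argument abstracts the same content into the quotient matroid $\mathcal{Q}$ on $R_\ell$: mutual relevance is exactly independence in $\mathcal{Q}$, the length-$\ell$ parts of MCBs are exactly its bases (your realizability claim does need the ``common length profile'' fact, but that follows from Lemma~\ref{Lem:SmallCycs2MCB}, which shows $\calM_{\le w}$ spans $V_{\le w}$ for every MCB), and $\pInt$ becomes the common-circuit relation, whose transitivity is Tutte's classical connectivity theorem. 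What your route buys is transparency --- it makes Theorem~\ref{Thm:DirInt_Comp} and the rank corollary immediate, since the {\tt pi} classes of length $\ell$ are just the connected components of $\mathcal{Q}$ --- at the cost of outsourcing the one hard step (transitivity of the common-circuit relation, via strong circuit elimination) to an external theorem, which you correctly identify as the crux. The paper instead reproves that step by hand in the binary/linear setting, where symmetric differences and explicit expansions make the elimination argument concrete; its proof is longer but self-contained. The two proofs are the same mathematics in different languages, and your observation that a single MCB cannot in general witness both hypotheses simultaneously is exactly the difficulty the paper's iterative merging of $\calX$ and $\calY$ is designed to circumvent.
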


The proofs of Theorem~\ref{Thm:DirInt_Comp} and the other results in Section~\ref{sub:DICE_Props} are in Appendix~\ref{app:DICE_Proofs}.

As an equivalence relation, {\tt pi} partitions the relevant cycles into equivalence classes. 
Although {\tt pi} satisfies the mathematical definition of an equivalence relation, we do not consider a pair of {\tt pi} cycles to be equivalent. For instance, the faces of the cube are {\tt pi}, but they are visually distinct. We find the {\tt pi} classes are better interpreted as polyhedron clusters as discussed in Section~\ref{sub:Poly}.

Definition~\ref{def:dir_intchg} for {\tt pi} is valuable because it explicitly uses the basis exchange property. 
However, Definition~\ref{def:dir_intchg} is hard to test in practice.
Thus, it is also useful to have an equivalent, algebraic statement of this definition, which we state below. 




\begin{lemma}
	Let $C_1,C_2\in\RelCyc$ where $|C_1|=|C_2|$. $C_1\pInt C_2$ 
	if and only if there exist cycles $\calX\subseteq\RelCyc$, $C_1,C_2\notin \calX$, such that
	\begin{enumerate}
		\item there exists an MCB $\calM_2$ such that $\calX\cup\{C_2\}\subseteq\calM_2$ and,
		\item $C_1$ is given by the sum
		\begin{equation}
			C_1 = C_2\oplus\bigoplus_{C'\in\calX}C',\label{eq:dir_int_C1_Ex}
		\end{equation}
		i.e., the expansion of $C_1$ in $\calM_2$ is given by $\expand{\calM_2}{C_1}=\calX\cup\{C_2\}$.
	\end{enumerate}
	\label{Lem:dir_intchg_2}
\end{lemma}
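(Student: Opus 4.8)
The plan is to prove the two directions of the equivalence separately, in each case translating between the geometric statement of Definition~\ref{def:dir_intchg} (a basis exchange between two MCBs) and the algebraic conditions~1--2 through the expansion operator $\expand{\calM_2}{\cdot}$. The two supporting results I would lean on are the Basis exchange Lemma~\ref{Lem:Bas_Ex} and, crucially, the second statement of the Relevance criterion Lemma~\ref{Lem:Relevant_MCB_Test}: the latter is exactly the tool that upgrades a mere basis exchange into an exchange between \emph{minimum} cycle bases, once one knows the two swapped cycles have equal length. The standing hypothesis $|C_1|=|C_2|$ is what activates it.

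For the reverse direction ($\Leftarrow$), I would start from the hypothesized set $\calX$ and MCB $\calM_2$. Condition~2 gives $\expand{\calM_2}{C_1}=\calX\cup\{C_2\}$, so in particular $C_2\in\expand{\calM_2}{C_1}$. Since $C_1\in\RelCyc$, $\calM_2$ is an MCB, and $|C_2|=|C_1|$, Lemma~\ref{Lem:Relevant_MCB_Test}(2) applies verbatim with $C'=C_2$ and yields that $\calM_1=(\calM_2\setminus\{C_2\})\cup\{C_1\}$ is itself an MCB. This $\calM_2$, together with the swap, is precisely the witness required by Definition~\ref{def:dir_intchg}, so $C_1\pInt C_2$. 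This direction should be essentially immediate once the lemmas are in hand.

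For the forward direction ($\Rightarrow$), I would take the MCB $\calM_2\ni C_2$ guaranteed by $C_1\pInt C_2$, for which $\calM_1=(\calM_2\setminus\{C_2\})\cup\{C_1\}$ is also an MCB, and set $\calX\coloneq\expand{\calM_2}{C_1}\setminus\{C_2\}$. Condition~2 then holds by the very definition of the expansion operator, \emph{provided} $C_2$ actually occurs in $\expand{\calM_2}{C_1}$. This is the one place that needs a genuine argument: if $C_2\notin\expand{\calM_2}{C_1}$, then $C_1$ would lie in the span of $\calM_2\setminus\{C_2\}$, making $\calM_1$ linearly dependent and contradicting that $\calM_1$ is a basis; hence $C_2\in\expand{\calM_2}{C_1}$. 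The remaining bookkeeping is routine: $\calX\subseteq\calM_2\subseteq\RelCyc$ because every MCB is contained in the union of all MCBs (Definition~\ref{def:rel}), which gives both $\calX\subseteq\RelCyc$ and condition~1 ($\calX\cup\{C_2\}\subseteq\calM_2$); moreover $C_2\notin\calX$ by construction, while $C_1\notin\calX$ because $C_2\in\expand{\calM_2}{C_1}$ forces $C_1\notin\calM_2$ (a basis element expands to itself), the only exception being the reflexive case $C_1=C_2$, which is handled by simply taking $\calX=\emptyset$.

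The main obstacle, such as it is, is the linear-independence step in the forward direction establishing $C_2\in\expand{\calM_2}{C_1}$; everything else reduces to a single application of Lemma~\ref{Lem:Relevant_MCB_Test}(2) and tracking set membership. I would also remark that the equal-length hypothesis is what makes the reverse direction go through, and that in the forward direction it is automatic, since two MCBs differing in exactly one cycle must have equal cost and therefore $|C_1|=|C_2|$.
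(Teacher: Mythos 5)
Your proof is correct and follows essentially the same route as the paper, which establishes this statement via the slightly more general Lemma~\ref{Lem:dir_intchg_3}: both directions rest on the same two ingredients, namely the linear-independence argument showing $C_2\in\expandd{\calM_2}{C_1}$ (otherwise $\calM_1$ would be dependent) and a single application of Lemma~\ref{Lem:Relevant_MCB_Test}, Statement 2, to upgrade the basis exchange to an MCB exchange. Your direct proof simply skips the paper's extra step (Lemma~\ref{Lem:SmallCycs2MCB}) of pushing auxiliary shorter cycles into the MCB, which is unnecessary here because condition~2 already places the entire expansion inside $\calM_2$.
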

\noindent Note, the cycles in $\calX$ are bounded in length, $|C'|<|C_1|$ for $C'\in\calX$, by Statement 1 of Lemma~\ref{Lem:Relevant_MCB_Test}. Moreover, $\calX$ satisfies the conditions for $C_1,C_2$ to be interchangeable as in Ref.~\cite{gleissInterchangeability2000} (Definition~\ref{def:intchg}) with the added constraint that $\calX\cup\{C_2\}$ belongs to a MCB. 
Thus, {\tt pi} implies interchangeability. The reverse is not true, e.g., $C_L$ and $C_R$ in Figure~\ref{fig:overlapping_diamonds} are interchangeable but not {\tt pi}-interchangeable.

In brief, Lemma~\ref{Lem:dir_intchg_2} states a cycle $C_1$ is {\tt pi}-related to the equal length cycles in its expansion with respect to a MCB $\calM_2$. One may wonder, do we need many MCBs to construct the {\tt pi} classes? Fortunately, it is sufficient to generate a single MCB, as stated below.

\begin{theorem}
	Let $\calM$ be a given MCB. Let $C_2\in\calM$ and $C_1\in\RelCyc{\setminus}\calM$. If $C_2\in\expandd{\calM}{C_1}$ and $|C_1|=|C_2|$ then $C_1\pInt C_2$.
	The {\tt pi} classes are given by the transitive closure of these relations.\nolinebreak 
	\label{Thm:DirInt_Comp}
\end{theorem}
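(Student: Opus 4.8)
\emph{Plan.} The statement has two halves: that each listed relation is genuinely {\tt pi}, and that the transitive closure of these relations recovers the full {\tt pi}-classes. The first half is immediate from the Relevance Criterion. Under the hypotheses, $C_1$ is relevant and $C_2\in\expandd{\calM}{C_1}$ with $|C_2|=|C_1|$, so Statement~2 of Lemma~\ref{Lem:Relevant_MCB_Test} gives that $(\calM\setminus\{C_2\})\cup\{C_1\}$ is an MCB; taking $\calM_2=\calM$ in Definition~\ref{def:dir_intchg} yields $C_1\pInt C_2$. Consequently one inclusion of the second half is free: every generated relation is {\tt pi}, and since {\tt pi} is an equivalence relation (Theorem~\ref{Thm:DirInt_Eq_Rel}) hence transitive, the transitive closure of the generated relations sits inside {\tt pi}, so each transitive-closure class lies in a single {\tt pi}-class. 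The real content is the reverse inclusion: any two {\tt pi}-related cycles must be linked by a chain of generated relations.

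For the reverse inclusion I would filter by length. Because $C_1\pInt C_2$ forces $|C_1|=|C_2|$, each {\tt pi}-class consists of cycles of one common length, and the generated relations also join only equal-length cycles; so I fix $\ell$ and argue within the length-$\ell$ relevant cycles. Let $S_{<\ell}$ and $S_{\le\ell}$ be the spans in $\CycSp$ of all cycles of length $<\ell$ and $\le\ell$; by Statement~1 of Lemma~\ref{Lem:Relevant_MCB_Test} these equal $\Span\{B\in\calM:|B|<\ell\}$ and $\Span\{B\in\calM:|B|\le\ell\}$. Passing to the quotient $Q_\ell=S_{\le\ell}/S_{<\ell}$, the images of length-$\ell$ relevant cycles span a $GF(2)$-space and define a binary matroid $N_\ell$. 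Writing $\calM_\ell:=\{B\in\calM:|B|=\ell\}$ and reducing an expansion modulo $S_{<\ell}$, the image of a length-$\ell$ cycle $C$ is the sum of the images of $\expandd{\calM}{C}\cap\calM_\ell$; thus $\calM_\ell$ maps to a basis of $N_\ell$, and $\expandd{\calM}{C}\cap\calM_\ell$ is precisely the fundamental circuit of $C$ (minus $C$) with respect to that basis. A short linear-algebra check shows the converse: replacing $\calM_\ell$ by any other $N_\ell$-basis while keeping the shorter and longer cycles of $\calM$ gives a cycle basis of the same total length, hence another MCB.

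With this dictionary I would identify {\tt pi} on length-$\ell$ cycles with connectivity of $N_\ell$. Using Lemma~\ref{Lem:dir_intchg_2} and the converse just noted, $A\pInt B$ holds exactly when $A$ and $B$ lie in a common circuit of $N_\ell$: a witnessing MCB $\calM_2$ for $A\pInt B$ has a length-$\ell$ part that is an $N_\ell$-basis whose fundamental circuit of $A$ contains $B$; conversely any common circuit supplies a basis exchange that lifts to a pair of MCBs by the converse above. Since $N_\ell$ is binary, its ``common circuit'' relation is already an equivalence relation, namely the partition into connected components, consistent with {\tt pi} being an equivalence relation. Finally, the generated relations connect each non-basis cycle to every other element of its fundamental circuit with respect to the single basis $\calM_\ell$, and for a binary matroid every circuit is a symmetric difference of the fundamental circuits of one basis; hence those fundamental circuits already generate all of $N_\ell$'s connectivity. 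Taking transitive closures then matches the generated-relation components with the $N_\ell$-components, i.e. with the length-$\ell$ {\tt pi}-classes, closing the reverse inclusion.

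The main obstacle I expect is the structural claim that the length-$\ell$ parts of MCBs are exactly the bases of $N_\ell$, together with the faithful translation of Definition~\ref{def:dir_intchg} into a circuit exchange in $N_\ell$. The delicate points are that expansions only guarantee $|C'|\le\ell$ rather than strict inequality, which is why one must work in the quotient $Q_\ell$ to suppress the shorter cycles, and that each abstract circuit-exchange step must be lifted back to an honest pair of MCBs differing by a single equal-length swap. The matroid-connectivity-from-one-basis fact is standard for binary matroids and can be cited or proved in one line via symmetric differences.
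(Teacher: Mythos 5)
Your proposal is correct, but it reaches the hard direction (that the single-MCB relations generate all of {\tt pi}) by a genuinely different route than the paper. The paper works directly with cycle expansions: given $C_1\pInt C_2$, it invokes Lemma~\ref{Lem:dir_intchg_3} to write $C_1=C_2\oplus\bigoplus_{C\in\calX}C\oplus\smCyc$ with $\calX\cup\{C_2\}$ mutually relevant, and then runs an explicit greedy chain-building procedure: at each step it finds an equal-length basis cycle $B_j\in\calM$ in the expansion of the current partial sum (feasible because mutually relevant sets cannot sum to shorter cycles, Lemma~\ref{Lem:mut_rel_2}) and a further cycle $X_j\in\calX\cup\{C_1\}$ whose expansion also contains $B_j$, linking the two through $B_j$ until $C_1$ is reached. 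You instead pass to the quotient $Q_\ell=S_{\le\ell}/S_{<\ell}$, identify the length-$\ell$ slices of MCBs with the bases of the induced binary matroid $N_\ell$ (the standard threshold characterization of minimum-weight bases), translate $\pInt$ into the common-circuit relation, and close with the fact that fundamental circuits with respect to a single basis generate matroid connectivity. Both arguments are sound; yours is more conceptual and buys Theorem~\ref{Thm:DirInt_Eq_Rel} essentially for free (matroid connectivity is an equivalence relation), whereas the paper's is elementary and self-contained, and its mutual-relevance machinery is reused elsewhere (Lemma~\ref{Lem:DICE_Irreducible}, the {\tt sli} results). Two points you should not wave off as standard: (i) the forward half of your dictionary needs the observation that a successful exchange forces $C_2\in\expandd{\calM_2}{C_1}$, which the paper proves inside Lemma~\ref{Lem:dir_intchg_3}; and (ii) connectivity-from-one-basis is a little more than one line --- after writing a circuit as a symmetric difference of fundamental circuits you must still argue that those fundamental circuits have a connected intersection graph, e.g.\ because a split into two groups with disjoint unions would decompose the circuit into two disjoint nonempty dependent sets, contradicting minimality.
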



Theorem~\ref{Thm:DirInt_Comp} reduces the task of computing the {\tt pi} classes to breadth-first search. As input, we compute an MCB $\calM$ and the relevant cycles $\RelCyc$. Then, we expand the relevant cycles in $\calM$. Using this, we create a bipartite graph, where nodes represent the relevant cycles split by if they are in $\calM$. Two cycles $C_1\in\calM$ and $C_2\in\RelCyc\setminus\calM$ are joined by an edge if $C_2\in\expand{\calM}{C_1}$. The {\tt pi} classes are the connected components of this bipartite graph. See Figure~\ref{fig:DICE_alg}~(c) for an instance of this bipartite graph.


%

The {\tt pi} relation can be used to construct a path from one MCB to another by only swapping one pair of cycles at a time. 
\begin{lemma}
	Let $\calM_1,\calM_2$ be distinct MCBs. There exists $C_1\in\calM_1\setminus\calM_2$ and $C_2\in\calM_2\setminus\calM_1$ such that $\calM_1'\,{=}\,(\calM_1\setminus\{C_1\})\cup\{C_2\}$ is an MCB.
	\label{Lem:DICE_Irreducible}
\end{lemma}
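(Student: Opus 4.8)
The plan is to reduce the whole statement to a single well-chosen basis exchange, using only the two exchange results already established: the Basis exchange lemma (Lemma~\ref{Lem:Bas_Ex}) to produce candidate swaps, and the Relevance criterion (Lemma~\ref{Lem:Relevant_MCB_Test}) to guarantee that a swap between equal-length cycles stays minimal. The organizing device will be to look at the symmetric difference $\calM_1\symdiff\calM_2$ and focus on a cycle of \emph{minimum} length in it; minimality is what will force the removed and inserted cycles to have exactly equal length, which is the hypothesis Lemma~\ref{Lem:Relevant_MCB_Test} needs to conclude that the result is again an MCB.

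First I would record a small linear-independence observation used twice: if $\calB$ is a basis and $C\notin\calB$ while $C$ belongs to some other basis $\calB'$, then $\expand{\calB}{C}\not\subseteq\calB'$. Indeed $C=\bigoplus_{B\in\expand{\calB}{C}}B$ with $C\notin\expand{\calB}{C}\subseteq\calB$, so $\expand{\calB}{C}\subseteq\calB'$ together with $C\in\calB'$ would give the nontrivial relation $C\oplus\bigoplus_{B\in\expand{\calB}{C}}B=\emptyset$ among elements of the basis $\calB'$, a contradiction. Next I set $\ell_0=\min\{|B|:B\in\calM_1\symdiff\calM_2\}$, which exists since $\calM_1\neq\calM_2$. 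I claim $\ell_0$ is attained by some $C_2\in\calM_2\setminus\calM_1$: take any length-$\ell_0$ cycle $C$ in $\calM_1\symdiff\calM_2$; if $C\in\calM_2\setminus\calM_1$ we are done, and otherwise $C\in\calM_1\setminus\calM_2$, in which case expanding $C$ in $\calM_2$ and applying the observation yields some $C_2\in\expand{\calM_2}{C}\cap(\calM_2\setminus\calM_1)$, with $|C_2|\le|C|=\ell_0$ by Statement~1 of Lemma~\ref{Lem:Relevant_MCB_Test} and $|C_2|\ge\ell_0$ by minimality, so $|C_2|=\ell_0$.

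Now with $C_2\in\calM_2\setminus\calM_1$ of minimal length $\ell_0$ fixed, I expand $C_2$ in $\calM_1$ and again use the observation to obtain $C_1\in\expand{\calM_1}{C_2}\cap(\calM_1\setminus\calM_2)$. Statement~1 of Lemma~\ref{Lem:Relevant_MCB_Test} gives $|C_1|\le|C_2|=\ell_0$, while $C_1\in\calM_1\symdiff\calM_2$ and minimality of $\ell_0$ give $|C_1|\ge\ell_0$, so $|C_1|=|C_2|$. Since $C_2$ is relevant (it lies in the MCB $\calM_2$) and $C_1\in\expand{\calM_1}{C_2}$ with $|C_1|=|C_2|$, Statement~2 of Lemma~\ref{Lem:Relevant_MCB_Test} shows $\calM_1'=(\calM_1\setminus\{C_1\})\cup\{C_2\}$ is an MCB, with $C_1\in\calM_1\setminus\calM_2$ and $C_2\in\calM_2\setminus\calM_1$ as required.

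The hard part is not the final exchange but the bookkeeping that makes it legal: I need the cycle being removed to lie genuinely in $\calM_1\setminus\calM_2$ \emph{and} to match $C_2$ in length. A naive choice of $C_2$ can leave every equal-length cycle in its $\calM_1$-expansion sitting inside $\calM_2$, and a naive length bound only yields an inequality. Both difficulties are resolved by the single global-minimality choice over $\calM_1\symdiff\calM_2$, whose one genuinely nonobvious step is showing the minimum is realized on the $\calM_2\setminus\calM_1$ side (the claim proved above); once that is in hand, the two inequalities from Lemma~\ref{Lem:Relevant_MCB_Test} pinch the lengths to equality and the conclusion follows immediately.
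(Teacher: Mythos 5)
Your proof is correct, but it takes a genuinely different route from the paper's. The paper fixes an \emph{arbitrary} $C_2\in\calM_2\setminus\calM_1$, moves the equal-length cycles of $\expand{\calM_1}{C_2}$ to the left-hand side to exhibit a subset of equal-length cycles summing to shorter ones, and then invokes the mutual-relevance machinery (Definition~\ref{def:mut_rel} and Lemma~\ref{Lem:mut_rel_2}) to conclude that these cycles cannot all sit inside $\calM_2$ --- which directly produces an equal-length $C_1\in\expand{\calM_1}{C_2}\setminus\calM_2$. You instead avoid mutual relevance entirely: your only tools are the elementary linear-independence observation about expansions across bases and the two statements of Lemma~\ref{Lem:Relevant_MCB_Test}, and you compensate by choosing $C_2$ at the \emph{minimum} length occurring in $\calM_1\symdiff\calM_2$ so that the two inequalities pinch $|C_1|=|C_2|$. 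Each approach buys something: the paper's argument is stronger per step, since it shows \emph{every} $C_2\in\calM_2\setminus\calM_1$ admits a swap partner, whereas yours produces only one pair at the minimal length; on the other hand, yours is more self-contained, needing nothing from Appendix~A.1, and it still supports the iteration used after the lemma (each swap strictly shrinks $\calM_1\symdiff\calM_2$, so repeating it reaches $\calM_2$). Your identification of the length-matching as the crux, and of global minimality as the fix, is exactly right.
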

In this procedure, each cycle in $\calM_1$ also belongs to $\calM_2$ or it is {\tt pi}-interchangeable for a cycle in $\calM_2\setminus\calM_1$. This results in a bijective mapping from $\calM_1$ and $\calM_2$ between {\tt pi}-interchangeable cycles. From this, we obtain the following result.
\begin{corollary}
	Let $\calM_1,\calM_2$ be MCBs and $\mathcal{PI}\subseteq\RelCyc$ be a {\tt pi} class. Then, $\calM_1$ and $\calM_2$ have the same number of cycles in $\mathcal{PI}$:
	\begin{equation}
		|\calM_1\cap\mathcal{PI}| = |\calM_2\cap\mathcal{PI}|.
	\end{equation}
\end{corollary}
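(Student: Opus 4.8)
The plan is to induct on the distance between the two minimum cycle bases, measured by $d \coloneq |\calM_1 \setminus \calM_2|$, and to show that each elementary swap supplied by Lemma~\ref{Lem:DICE_Irreducible} preserves the count $|\calM \cap \mathcal{PI}|$. First I would record the standard fact that every MCB has the same cardinality $\nu$ (the dimension of the cycle space), so that $|\calM_1 \setminus \calM_2| = |\calM_2 \setminus \calM_1| = d$; in particular $d=0$ forces $\calM_1 = \calM_2$, which is the trivial base case. For the inductive step with $d \ge 1$, apply Lemma~\ref{Lem:DICE_Irreducible} to obtain cycles $C_1 \in \calM_1 \setminus \calM_2$ and $C_2 \in \calM_2 \setminus \calM_1$ such that $\calM_1' \coloneq (\calM_1 \setminus \{C_1\}) \cup \{C_2\}$ is again an MCB.

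The crucial observation is that the swapped pair is {\tt pi}-interchangeable. Indeed, $\calM_1$ is an MCB containing $C_1$, and replacing $C_1$ by $C_2$ yields the MCB $\calM_1'$, which is exactly the condition in Definition~\ref{def:dir_intchg} witnessing $C_2 \pInt C_1$. By Theorem~\ref{Thm:DirInt_Eq_Rel}, {\tt pi} is an equivalence relation, so $C_1$ and $C_2$ lie in a common {\tt pi} class. Because the {\tt pi} classes partition $\RelCyc$, either both $C_1$ and $C_2$ belong to the fixed class $\mathcal{PI}$, or neither does. In the first case the swap removes one element of $\mathcal{PI}$ and adds another; in the second it touches no element of $\mathcal{PI}$. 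Either way,
\begin{equation}
	|\calM_1' \cap \mathcal{PI}| = |\calM_1 \cap \mathcal{PI}|.
\end{equation}

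Finally I would verify that the swap strictly decreases the distance to $\calM_2$: since $C_1 \notin \calM_2$ is deleted and $C_2 \in \calM_2$ is inserted, we have $|\calM_1' \setminus \calM_2| = d-1$. The induction hypothesis applied to the pair $\calM_1', \calM_2$ then gives $|\calM_1' \cap \mathcal{PI}| = |\calM_2 \cap \mathcal{PI}|$, and chaining this with the displayed equality yields $|\calM_1 \cap \mathcal{PI}| = |\calM_2 \cap \mathcal{PI}|$, completing the induction. I do not anticipate a genuine obstacle here, as Lemma~\ref{Lem:DICE_Irreducible} does the heavy lifting; the only point requiring care is confirming that the swapped cycles are {\tt pi}-related (rather than merely interchangeable in the weaker sense of Definition~\ref{def:intchg}), which follows directly because the lemma produces an actual MCB-to-MCB exchange and so matches Definition~\ref{def:dir_intchg} verbatim.
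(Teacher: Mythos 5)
Your proof is correct and follows essentially the same route as the paper: the paper likewise iterates Lemma~\ref{Lem:DICE_Irreducible} to walk from $\calM_1$ to $\calM_2$ one swap at a time, noting that each swapped pair is {\tt pi}-interchangeable by Definition~\ref{def:dir_intchg} and hence lies in a common {\tt pi} class, so the per-class count is invariant. Your explicit induction on $d=|\calM_1\setminus\calM_2|$ and the check that each swap decreases $d$ merely formalize what the paper states in prose.
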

\noindent This introduces a notion of rank for the {\tt pi} classes, given by the number of cycles in the {\tt pi} class that belong to a MCB $|\calM\cap\mathcal{PI}|$.

\subsubsection{{\tt pi} classes as polyhedron clusters}
\label{sub:Poly}


The partitions of the relevant cycles 
were introduced as a means of addressing the lack of uniqueness in MCBs. In the examples considered so far, lack of uniqueness can be caused by symmetries or by three-dimensional data. Here, we will discuss the latter. Specifically, we will discuss how {\tt pi} classes may be used to identify three-dimensional structures.

First, consider a graph composed of a convex polyhedron with face cycles $F_1,\hdots,F_k$ ordered by decreasing length $|F_1|\ge|F_2|\ge\hdots\ge|F_k|$.
For instance, our graph could be one of the platonic solids, e.g., the tetrahedron or cube. 
We observe, that each edge in the polyhedron participates in two face cycles. Thus, all edges cancel in the sum $F_1\oplus\hdots\oplus F_k$. In particular, the face cycles sum to the empty set
\begin{equation}
	F_1\oplus \hdots\oplus F_k=\emptyset.
	\label{eq:poly_null_sum}
\end{equation}
Next, we re-arrange Eq.~\eqref{eq:poly_null_sum} to express $F_1$ as the sum of the remaining face cycles
\begin{equation}
	F_1 = F_2\oplus (F_3\oplus\hdots \oplus F_{k}).\label{eq:dir_int_poly}
\end{equation}
See Figure~\ref{fig:cube} for the case of the cube. 
Eq.~\eqref{eq:dir_int_poly} resembles Eq.~\eqref{eq:dir_int_C1_Ex} in the definition of the {\tt pi} relation given by Lemma~\ref{Lem:dir_intchg_2}.
Indeed, if we add two constraints (i) $|F_1|=|F_2|$ and (ii) $F_2,\hdots,F_{k}$ form an MCB, then $F_1$ and $F_2$ are {\tt pi}-interchangeable by Lemma~\ref{Lem:dir_intchg_2}.

\begin{figure}[t]
	\centering
	\includegraphics{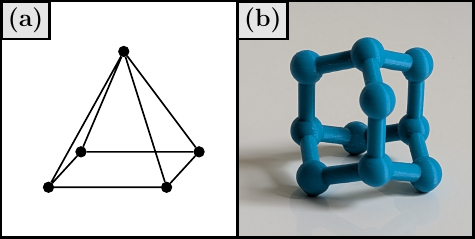}
	\caption{\textbf{(a)} The square pyramid, a polyhedron whose faces do not form a {\tt pi} class. 
		The unique MCB of this graph is the set of triangular faces. 
		\textbf{(b)} Carbon skeleton of adamantane, $\rm C_{10}H_{16}$, the smallest diamondoid, or a small molecule with the same structure as diamond. The faces of adamantane are a {\tt pi} class, 
		but they are not flat as in a classical polyhedron.}
	\label{fig:poly_counter}
\end{figure}

We remark, not all polyhedra can be used to obtain a pair of face cycles $F_1,F_2$ such that $F_1\pInt F_2$. 
For example, if $F_1$ is strictly longer than the other faces $|F_1|>|F_i|$, $i=2,\hdots,k$, then $F_1$ is not relevant as the sum of smaller cycles by Eq.~\eqref{eq:dir_int_poly}. This is the case for the square pyramid, where the square base is the largest cycle in Figure~\ref{fig:poly_counter} (a). Nevertheless, polyhedra are useful for constructing examples for the {\tt pi} relation.

The {\tt pi} classes can be used to identify three-dimensional structures. If $C_1,C_2$ are {\tt pi}-interchangeable, then we can use Lemma~\ref{Lem:dir_intchg_2} to obtain a set of cycles $\calP=\{C_1,C_2\}\cup\calX$ with null sum
\begin{equation}
	C_1\oplus C_2 \oplus\bigoplus_{C'\in\calX}C' = \emptyset.
\end{equation}
This matches the sum of face cycles for polyhedra, Eq.~\eqref{eq:poly_null_sum}. Thus, we propose to interpret these sets $\calP$ with null sums obtained by Lemma~\ref{Lem:dir_intchg_2} as polyhedra.

It is not necessarily the case that we can draw these null sums as polyhedra, e.g., adamantane in Figure~\ref{fig:poly_counter} (b) does not have flat faces. Instead, the sets $\calP$ obtained via {\tt pi} relations might be more appropriately defined as abstract polyhedra~\cite{mcmullenAbstract2002} with possibly curved faces and edges. 

\begin{figure}[t]
	\centering
	\includegraphics{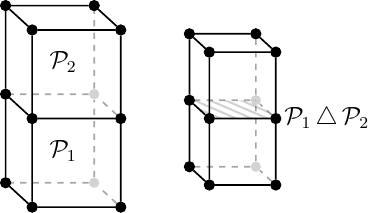}
	\caption{Graph composed of two cubes $\calP_1$ and $\calP_2$. Both $\calP_1$ and $\calP_2$ are sets of six face cycles. The symmetric difference $\calP_1\symdiff\calP_2$ (depicted right) gives the ten faces of the larger rectangular prism containing $\calP_1$ and $\calP_2$. The interior face cycle (shaded with gray stripes) belonging to both $\calP_1$ and $\calP_2$ is not present in $\calP_1\symdiff\calP_2$.}
	\label{fig:polyhedron_pair}
\end{figure}

We remark, a {\tt pi} class is often not a single polyhedron. For instance, if we glue two cubes together so that they share a face as shown in Figure~\ref{fig:polyhedron_pair}, then all square faces are {\tt pi}-interchangeable by transitivity. Thus, in the same way that a biconnected component is a cluster of loops, we argue that a {\tt pi} class should be interpreted as a cluster of polyhedra.

Furthermore, we may extend the polyhedra obtained by a {\tt pi} class to a vector space. As before, the addition operator of this vector space is the symmetric difference, and the scalars are the binary GF(2) field. This ``polyhedron space'' is simply the null space of the relevant cycles, i.e., if polyhedra $\calP_1$ and $\calP_2$ have null sums then so does their symmetric difference
\begin{equation}
	\bigoplus_{C'\in\calP_1\symdiff\calP_2} C' = \bigoplus_{C'\in\calP_1}C'\oplus\bigoplus_{C''\in\calP_2}C''.
\end{equation}
A polyhedron space is likely useful for graphs representing material structures, where there are three intrinsic spatial dimensions. It appears pertinent to define a notion of ``minimum polyhedron basis'' to serve as a summary statistic for this space. We lack the algorithms and data to justify such a construction. 

\subsection{Short loop-interchangeability: definition, equivalence classes, \& computation}

\label{sub:TICE_def}








Here, we introduce the second {\tt sli} relation used to partition the relevant cycles.
To motivate this, we discuss where the {\tt pi} classes are less effective as a descriptive tool.

In Section~\ref{sub:Poly}, the {\tt pi} relation was related to three-dimensional structures, i.e., polyhedra with non-flat faces.
Consider the graph shown in Figure~\ref{fig:2ManyPoly}~(a). 
The polyhedra are sampled using an MCB $\calM$ by merging each cycle $C\in\RelCyc\setminus\calM$ with its expansion $\expand{\calM}{C}$ as in Figure~\ref{fig:2ManyPoly}~(b).
These polyhedra describe three-dimensional structures like beads of a bracelet, as in Figure~\ref{fig:2ManyPoly}~(c).

Visually, the third polyhedron $\calP_3$ appears redundant, where its two beads are a combination of the other polyhedra. 
This is not resolved by treating the polyhedra as a vector space since $\calP_3\neq\calP_1\symdiff\calP_2$. 
As the bracelet grows in length, the number of visually redundant polyhedra grows exponentially with the number of relevant cycles.

\begin{figure}[t]
	\centering
	\includegraphics{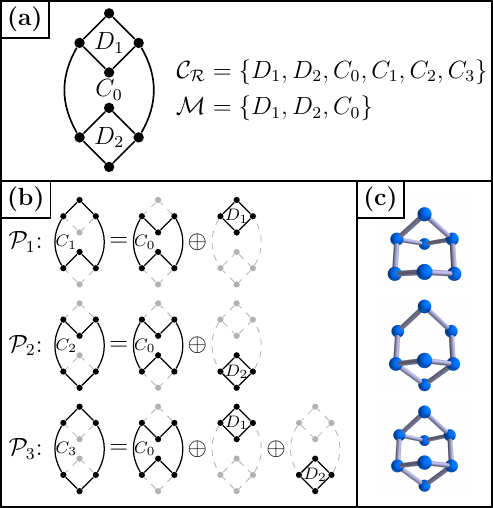}
	\caption{
		\textbf{(a)} Bracelet graph composed of two diamonds chained together. 
		The planar faces form an MCB $\calM$. 
		\textbf{(b)}~Three polyhedra $\calP_1\,{=}\,\{C_1{,}C_0{,}D_1\}, \calP_2\,{=}\,\{C_2{,}C_0{,}D_2\},$ and $\calP_3=\{C_3{,}C_0{,}D_1{,}D_2\}$ constructed using the cycles not in $\calM$ and their expansions in $\calM$.
		\textbf{(c)}~Three-dimensional representation of the polyhedra as the union of their face cycles.
	}
	\label{fig:2ManyPoly}
\end{figure}

To address this, 
we observe the cycles in the examples above vary in length. 
We argue that larger cycles that differ by shorter ones should be treated as equivalent. 
For instance, we would group the circumference cycles around the carbon nanotube into an equivalence class without sampling polyhedra. 
We formalize this equivalence with the {\tt sli} relation.


\begin{definition}
	$C_1,C_2\in\mathcal{C}_{\mathcal{R}}$, $|C_1|=|C_2|$, are \emph{short loop-interchangeable} if
	\begin{equation}
		C_1=C_2\oplus\smCyc,
		\label{eq:triv_intchg}
	\end{equation}
	i.e., there exist cycles $\calY$, $|C| < |C_1|$ for $C\in\calY$, such that
	\begin{equation}
		C_1 = C_2\oplus\bigoplus_{C\in\calY}C.
	\end{equation}
	\label{def:triv_intchg}
\end{definition}
\noindent We use the operator $C_1\slInt C_2$ to denote $C_1$ is short loop-interchangeable for $C_2$.

The numerous cycles around the circumference of the carbon nanotube in Figure~\ref{fig:tubes}~(a) form a single {\tt sli} class.

{\tt sli} is a stronger relation than {\tt pi}.
A pair of {\tt pi}-related cycles may differ by equal length cycles. Therefore, the faces of a polyhedron with uniform lengths (e.g., the cube) form a {\tt pi} class but not a {\tt sli} class. 
We sample polyhedra from {\tt pi} classes that do not reduce to a {\tt sli} class. This is discussed in more detail in Section~\ref{sub:DICE_TICE_Comp}.

\begin{figure}[t]
	\centering
	\includegraphics{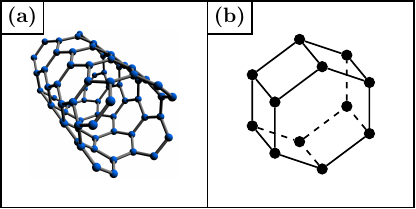}
	\caption{
		\textbf{(a)} Small carbon nanotube. The large cycles around the circumference of the nanotube are both a {\tt sli} class and a URF. 
		\textbf{(b)}~Hexagonal prism graph. The two hexagonal faces form a {\tt sli} class but not a URF. 
	}
	\label{fig:tubes}
\end{figure}

Definition~\ref{def:triv_intchg} was first introduced in the dissertation by Gleiss~\cite{gleissShort2001} as a stronger interchangeability relation. Two of the results shown here, Lemma~\ref{Lem:TrInt_Eq_Rel} and Corollary~\ref{Cor:tr_intchg_rank}, were first shown there. 
We include them for completeness. 
Here, we add a method of computing the {\tt sli} classes, Lemma~\ref{Lem:TrEx_Alg}. 
Also, our two-fold partition using both {\tt pi} and {\tt sli} classes is novel.
The proofs of these results are in Appendix~\ref{app:TICE_Proofs}

Our approach of equivocating cycles that differ by shorter cycles is inspired by the URFs in Ref.~\cite{kolodzikUnique2012}. 
The {\tt sli} relation is weaker than the URF-pair-relation.
Specifically, cycles do not need to share edges to be {\tt sli}-interchangeable. 
For example, the two hexagonal cycles around the perimeter of the prism shown in Figure~\ref{fig:tubes}~(b) are {\tt sli}-interchangeable but not URF-pair-related. 
In MD data, we find these two definitions behave similarly. 
We use the {\tt sli} relation because it is simpler, making it easier to develop theory.

\begin{lemma}
	{\tt sli} is an equivalence relation on $\mathcal{C}_{\mathcal{R}}$.
	\label{Lem:TrInt_Eq_Rel}
\end{lemma}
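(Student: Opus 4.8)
The plan is to verify the three defining properties of an equivalence relation --- reflexivity, symmetry, and transitivity --- directly from Definition~\ref{def:triv_intchg}, leaning on the fact that $\oplus$ makes every cycle its own additive inverse ($C'\oplus C'=\emptyset$) and that the witness set $\calY$ is allowed to be an \emph{arbitrary} finite set of strictly shorter cycles, with no relevance, independence, or shared-edge requirement imposed.

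First I would handle reflexivity. For any relevant cycle $C$, take the witness set $\calY=\emptyset$, so that $\bigoplus_{C'\in\calY}C'=\emptyset$ and $C=C\oplus\emptyset$; the length requirement on the members of $\calY$ holds vacuously, and $|C|=|C|$ is immediate, so $C\slInt C$. Next, symmetry: suppose $C_1\slInt C_2$ via a witness set $\calY$, so $|C_1|=|C_2|$ and $C_1=C_2\oplus\bigoplus_{C'\in\calY}C'$ with every $|C'|<|C_1|$. Adding $\bigoplus_{C'\in\calY}C'$ to both sides and using the self-inverse property collapses the right-hand side to $C_2$, giving $C_2=C_1\oplus\bigoplus_{C'\in\calY}C'$. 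Since $|C_1|=|C_2|$, the \emph{same} set $\calY$ witnesses $C_2\slInt C_1$, so the relation is symmetric.

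The main work --- and the one place a little care is needed --- is transitivity. Suppose $C_1\slInt C_2$ and $C_2\slInt C_3$ with witness sets $\calY_1$ and $\calY_2$. Then $|C_1|=|C_2|=|C_3|$, and substituting the expansion of $C_2$ into that of $C_1$ yields $C_1=C_3\oplus\bigoplus_{C'\in\calY_2}C'\oplus\bigoplus_{C'\in\calY_1}C'$. Cycles common to $\calY_1$ and $\calY_2$ appear twice and cancel, so the combined sum equals $\bigoplus_{C'\in\calY_1\symdiff\calY_2}C'$ --- exactly the set-level cancellation argument used in the V-family lemma earlier in the text. Taking $\calY_3=\calY_1\symdiff\calY_2\subseteq\calY_1\cup\calY_2$ as the witness, every member is strictly shorter than $|C_1|=|C_3|$, and $C_1=C_3\oplus\bigoplus_{C'\in\calY_3}C'$, giving $C_1\slInt C_3$.

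The only subtlety to flag is that the two witness sets may overlap, so the correct object to produce is the set symmetric difference $\calY_1\symdiff\calY_2$ rather than the union; this guarantees repeated cycles cancel while every surviving member remains strictly shorter than the common length. Because all three cycles have equal length, the three length thresholds coincide, so no member of $\calY_3$ can exceed the bound. Beyond this bookkeeping the argument is routine --- no appeal to relevance of the members of $\calY$ or to minimality is required, which is precisely why dropping the shared-edge condition of the URF-pair-relation upgrades the relation to a genuine equivalence.
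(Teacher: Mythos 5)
Your proposal is correct and follows essentially the same route as the paper's proof in Appendix~\ref{app:TICE_Proofs}: reflexivity via the empty witness set, symmetry by adding the witness cycles to both sides, and transitivity by substituting one expansion into the other and taking the symmetric difference $\calY\symdiff\calY'$ of the witness sets so that repeated cycles cancel. No gaps.
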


By Lemma~\ref{Lem:TrInt_Eq_Rel}, the {\tt sli} relation partitions the relevant cycles into equivalence classes. 
To compute these classes, we redefine {\tt sli} using an MCB.

\begin{lemma}
	\label{Lem:TrEx_Alg}
	Let $\calM$ be a given MCB. $C_1,C_2\in\RelCyc$, $|C_1|=|C_2|$, are {\tt sli}-interchangeable if and only if their expansions in $\calM$ are equal up to shorter cycles.
\end{lemma}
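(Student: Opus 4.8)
The plan is to translate both sides into statements about the expansion operator $\expand{\calM}{\cdot}$ and to exploit that this operator is linear over $GF(2)$, so that $\expand{\calM}{C_1\oplus C_2}=\expand{\calM}{C_1}\symdiff\expand{\calM}{C_2}$. Writing $\ell\coloneq|C_1|=|C_2|$, the goal is to show that $C_1\slInt C_2$ holds if and only if $\expand{\calM}{C_1}$ and $\expand{\calM}{C_2}$ contain exactly the same cycles of length $\ell$, which is the precise meaning of their expansions being ``equal up to shorter cycles.''

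First I would record the key auxiliary fact, which is essentially the computation already carried out in the proof of Corollary~\ref{cor:rel_def_eq}: a cycle $D\in\CycSp$ can be written as a sum of cycles each strictly shorter than $\ell$ if and only if every cycle in $\expand{\calM}{D}$ is strictly shorter than $\ell$. The ($\Leftarrow$) direction is immediate, since $D=\bigoplus_{B\in\expand{\calM}{D}}B$ already exhibits $D$ as such a sum. For ($\Rightarrow$), if $D=\bigoplus_{C'\in\calY}C'$ with $|C'|<\ell$, then $\expand{\calM}{D}\subseteq\bigcup_{C'\in\calY}\expand{\calM}{C'}$ by linearity, and Statement 1 of Lemma~\ref{Lem:Relevant_MCB_Test} bounds each $B\in\expand{\calM}{C'}$ by $|B|\le|C'|<\ell$.

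Next I would apply this fact to $D=C_1\oplus C_2$. By Definition~\ref{def:triv_intchg}, $C_1\slInt C_2$ is exactly the statement that $C_1\oplus C_2$ is a sum of cycles each shorter than $\ell$, which by the auxiliary fact is equivalent to every cycle in $\expand{\calM}{C_1\oplus C_2}$ being shorter than $\ell$. Using linearity once more, $\expand{\calM}{C_1\oplus C_2}=\expand{\calM}{C_1}\symdiff\expand{\calM}{C_2}$, so the condition becomes: the symmetric difference of the two expansions contains no cycle of length $\ell$.

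Finally I would convert this into the ``equal up to shorter cycles'' statement. By Statement 1 of Lemma~\ref{Lem:Relevant_MCB_Test}, every cycle appearing in $\expand{\calM}{C_1}$ or $\expand{\calM}{C_2}$ has length at most $\ell$, so restricting the symmetric difference to its length-$\ell$ part yields exactly the symmetric difference of the length-$\ell$ subsets of the two expansions; this is empty precisely when those two subsets coincide, which is the claim. I do not expect any single step to be a genuine obstacle: the only point requiring care is the bookkeeping in the auxiliary fact's ($\Rightarrow$) direction, where one must invoke the length bound of Lemma~\ref{Lem:Relevant_MCB_Test} rather than assuming the cycles in the expansion inherit the lengths of the generating cycles $\calY$ directly.
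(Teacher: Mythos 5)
Your proposal is correct and follows essentially the same route as the paper's proof: both directions reduce to linearity of the expansion operator together with the length bound from Statement 1 of Lemma~\ref{Lem:Relevant_MCB_Test}. The only cosmetic difference is that you package the argument around $D=C_1\oplus C_2$ and re-derive inline the content of the paper's Lemma~\ref{Lem:SmallCycs2MCB} (that a sum of cycles shorter than $\ell$ expands in $\calM$ into cycles shorter than $\ell$), which the paper instead cites directly.
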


\noindent Cycles in Vismara's families differ by shorter cycles. Therefore, the relevant cycles can be broken into {\tt sli} classes in polynomial time using Vismara's cycle families.

Note, if a cycle $C_1$ belongs to the MCB $\calM_1$, then its expansion in $\calM_1$ is $\expand{\calM_1}{C_1}=\{C_1\}$. 
If $C_2\slInt C_1$ then the expansion of $C_2$ in $\calM_1$ is $C_1$ and shorter cycles by Lemma~\ref{Lem:TrEx_Alg}. 
Thus, we may exchange $C_2$ for $C_1$ in $\calM_1$ to obtain a new MCB by Lemma~\ref{Lem:Relevant_MCB_Test}. 
We state this as a corollary.

\begin{corollary}
	Let $C_1,C_2$ be cycles such that $C_1{\slInt}\, C_2$. 
	Let $\calM_1$ be an MCB where $C_1{\in}\,\calM_1$. Then, $\calM_2=(\calM_1\setminus\{C_1\})\cup\{C_2\}$ is an MCB.
	\label{Cor:TrInt_Exchanges}
\end{corollary}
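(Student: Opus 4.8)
The plan is to reduce the statement directly to the relevance criterion of Lemma~\ref{Lem:Relevant_MCB_Test}, using the expansion-based characterization of {\tt sli}-interchangeability in Lemma~\ref{Lem:TrEx_Alg}. Since only a single element of $\calM_1$ is being swapped, the work is almost entirely bookkeeping on top of the basis-exchange machinery already established; the goal is to exhibit $C_2$ as a relevant cycle whose expansion in $\calM_1$ contains $C_1$ as an element of equal length, at which point the earlier lemma delivers the conclusion.

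First I would record that $\calM_1$ is a basis containing $C_1$, so the expansion of $C_1$ with respect to $\calM_1$ is the singleton $\expand{\calM_1}{C_1}=\{C_1\}$. Next, since {\tt sli} is an equivalence relation by Lemma~\ref{Lem:TrInt_Eq_Rel}, the hypothesis $C_1\slInt C_2$ yields $C_2\slInt C_1$, and Lemma~\ref{Lem:TrEx_Alg} then says that $\expand{\calM_1}{C_2}$ agrees with $\expand{\calM_1}{C_1}$ up to cycles strictly shorter than $|C_1|=|C_2|$. Combining this with the singleton expansion of $C_1$, I would conclude $\expand{\calM_1}{C_2}=\{C_1\}\cup\calY'$, where every cycle in $\calY'$ is strictly shorter than $C_2$; in particular $C_1\in\expand{\calM_1}{C_2}$ and $|C_1|=|C_2|$.

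With this in hand, I would apply statement~2 of Lemma~\ref{Lem:Relevant_MCB_Test} to the relevant cycle $C_2$, the MCB $\calM_1$, and the element $C'=C_1\in\expand{\calM_1}{C_2}$ of equal length $|C_1|=|C_2|$. The lemma then gives immediately that $\calM_2=(\calM_1\setminus\{C_1\})\cup\{C_2\}$ is an MCB, which is exactly the claim.

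I do not expect a serious obstacle, since every ingredient is already proved. The one point requiring care is passing from ``the two expansions agree up to shorter cycles'' to the stronger assertion that $C_1$ \emph{itself} lies in $\expand{\calM_1}{C_2}$: because the discrepancy between the expansions consists only of cycles strictly shorter than $|C_1|$, the cycle $C_1$ cannot belong to that discrepancy, and hence (being in $\expand{\calM_1}{C_1}$) must also appear in $\expand{\calM_1}{C_2}$. This is the step where symmetry of {\tt sli} and the singleton expansion of $C_1$ are both essential, and the one I would write out most carefully so that the equal-length hypothesis of Lemma~\ref{Lem:Relevant_MCB_Test} is genuinely verified.
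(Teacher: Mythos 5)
Your proposal is correct and follows essentially the same route as the paper: both arguments use the singleton expansion $\expand{\calM_1}{C_1}=\{C_1\}$, invoke Lemma~\ref{Lem:TrEx_Alg} to place $C_1$ (plus only strictly shorter cycles) in $\expand{\calM_1}{C_2}$, and then conclude via Statement~2 of Lemma~\ref{Lem:Relevant_MCB_Test}. Your extra care about symmetry of {\tt sli} and about why $C_1$ itself must appear in $\expand{\calM_1}{C_2}$ is a sound elaboration of a step the paper leaves implicit.
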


Also, observe $C_2$ is not in the MCB $\calM_1$ since it is already in the span of $C_1$ and shorter cycles in $\calM_1$. 
Therefore, $C_1$ is the only cycle from its {\tt sli} class that belongs to $\calM_1$:

\begin{corollary}
	Let $\calM$ be an MCB and $\calS\subseteq\RelCyc$ a {\tt sli} class. At most one cycle in $\calS$ belongs to $\calM$, i.e., $|\calM\cap\calS|\le 1$. 
	\label{Cor:tr_intchg_rank}
\end{corollary}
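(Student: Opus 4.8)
The plan is to argue by contradiction, reusing the exchange property packaged in Corollary~\ref{Cor:TrInt_Exchanges} rather than re-deriving anything from scratch. Suppose, toward a contradiction, that two \emph{distinct} cycles $C_1,C_2$ both lie in $\calM\cap\calS$. Since $\calS$ is an equivalence class of the {\tt sli} relation (Lemma~\ref{Lem:TrInt_Eq_Rel}), the fact that $C_1$ and $C_2$ belong to the same class immediately gives $C_1\slInt C_2$; here I would invoke symmetry of the equivalence relation so that the direction of the operator $\slInt$ is immaterial. In particular $|C_1|=|C_2|$, which is part of Definition~\ref{def:triv_intchg}.

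Next I would apply Corollary~\ref{Cor:TrInt_Exchanges} with $\calM_1=\calM$ and the pair $C_1\slInt C_2$: since $C_1\in\calM$, the corollary yields that $\calM'=(\calM\setminus\{C_1\})\cup\{C_2\}$ is again an MCB, hence a cycle basis of dimension $\nu$. The punchline is then a cardinality count. Because $C_2$ is \emph{already} an element of $\calM$ and $C_2\neq C_1$, we have $C_2\in\calM\setminus\{C_1\}$, so the union collapses: $\calM'=(\calM\setminus\{C_1\})\cup\{C_2\}=\calM\setminus\{C_1\}$. This set contains only $\nu-1$ cycles and therefore cannot span the cycle space, contradicting that $\calM'$ is a basis. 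The contradiction forces $C_1=C_2$, i.e.\ $|\calM\cap\calS|\le 1$.

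The main obstacle is bookkeeping rather than substance: one must notice precisely that when $C_2\in\calM$ the exchange does not produce a genuinely new basis but a strictly smaller set, and that this collapse is exactly where the contradiction lives. As a cross-check (and a fallback if one prefers not to route through Corollary~\ref{Cor:TrInt_Exchanges}), I would note the direct expansion argument via Lemma~\ref{Lem:TrEx_Alg}: for any $C_i\in\calM$ the expansion is the singleton $\expand{\calM}{C_i}=\{C_i\}$, so the requirement that $C_1$ and $C_2$ have expansions in $\calM$ equal up to strictly shorter cycles forces their unique length-$|C_1|$ components $\{C_1\}$ and $\{C_2\}$ to coincide, again giving $C_1=C_2$. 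I would present the exchange-based version as primary, since it reuses Corollary~\ref{Cor:TrInt_Exchanges} verbatim and needs no analysis of the internal structure of the expansions.
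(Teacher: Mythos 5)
Your proposal is correct and rests on the same fact the paper uses: by Lemma~\ref{Lem:TrEx_Alg} the expansion of $C_2$ in $\calM$ is $C_1$ plus strictly shorter cycles, so $C_2$ lies in the span of $\calM\setminus\{C_2\}$ and cannot itself be a basis element — which is exactly your fallback argument and is the paper's (one-line) proof. Your primary route through Corollary~\ref{Cor:TrInt_Exchanges} plus the cardinality collapse is just a repackaging of the same observation and is equally valid.
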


Note, the intersection may be empty, i.e., $|\calM\cap\calS|=0$. For instance, consider the cube where each face is its own {\tt sli} class, and only five faces belong to each MCB.

\section{Algorithms for sampling {\tt pi} and {\tt sli} classes}
\label{sec:algorithm}

In this section, we review and upgrade algorithms for computing the relevant cycle families, {\tt pi} classes, and {\tt sli} classes. In Section~\ref{sub:Vis_Fam}, we modify the relevant cycle families from Ref.~\cite{vismaraUnion1997} to be easier to compute in our setting. In Section~\ref{sub:MCB_comp}, we {show how a MCB can be extracted from these families using the algorithms from Ref.~\cite{kavithaFaster2004}.} In Section~\ref{sub:MCB_COB}, we show how to compute the expansion of cycles with respect to a MCB. In Section~\ref{sub:DICE_TICE_Comp}, we compute the {\tt pi} and {\tt sli} classes using the MCB representation of the cycle families.
%
%

\subsection{Modified Vismara's cycle families}
\label{sub:Vis_Fam}


\subsubsection{Definition}

{Here, we describe a method for computing candidate families of relevant cycles. 
We modify the construction of the V-families as introduced by Vismara~\cite{vismaraUnion1997} by rooting at an edge belonging to a fundamental cycle basis rather than a node. 
Compare Figures~\ref{fig:Vis_Fams} and~\ref{fig:mod_vis_fams}.
We call the modified cycle families V'-families.}
\begin{definition}[V'-family]
	Let $G$ be a graph and $T$ a given spanning tree of $G$. Let $e_j\in E(G)\setminus E(T)$ be an enumeration of the edges outside of the spanning tree $T$.
	\begin{itemize}
		\item The odd V'-family with descriptors $e_j = (u_j,v_j)$ and node $p$, where $d(u_j,p)=d(v_j,p)$, is the set of cycles with largest-indexed edge $e_j$ in $E(G)\setminus E(T)$ composed of $e_j$ and shortest path pairs from $u_j$ and $v_j$ to $p$. See Figure~\ref{fig:mod_vis_fams}, left.
		\item The even V'-family with descriptors $e_j=(u_j,v_j)$ and edge $(p,q)$, where $d(u_j,p)=d(v_j,q)$, is the set of cycles with largest-indexed edge $e_j$ composed of $e_j$, $(p,q)$, and shortest path pairs from $u_j$ to $p$ and from $v_j$ to $q$. See Figure~\ref{fig:mod_vis_fams}, right.
	\end{itemize}
	\label{def:mod_vis_fams}
\end{definition}

\begin{figure}[t]
	\centering
	\includegraphics{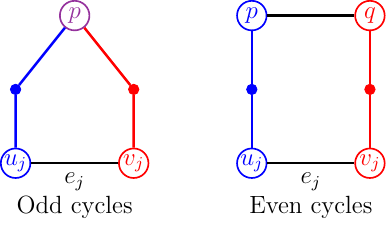}
	\caption{(Left) A cycle of length 5 whose descriptors are the largest-indexed edge $e_j\in E(G)\setminus E(T)$ for some spanning tree $T$ and the node $p$ antipodal to $e_j$. 
	(Right) A cycle of length 6 whose descriptors are its largest-indexed edge $e_j=(u,v)$ and antipodal edge $(p,q)$. 
	Nodes are colored red or blue if they are closer to $u$ or $v$ respectively. 
	The left node $p$ is purple because it is equidistant to $u$ and $v$.}
	\label{fig:mod_vis_fams}
\end{figure}

This has the following benefits:
\begin{itemize}
	\item \textbf{Node labeling.} The shortest paths for the V'-families are computed using breadth-first search starting at the root edge $e_j=(u_j,v_j)\in E(G)\setminus E(T)$. By rooting at an edge, we may recursively label whether a node is closer to $u_j$ or $v_j$. 
	See the blue and red nodes in Figure~\ref{fig:mod_vis_fams}.
	
	\item \textbf{Computation time.} 
	Repeatedly traversing the graph to construct V'-families is the primary bottleneck of our algorithm.
	To compute the V'-families, breadth-first search is run once per root edge in $E(G)\setminus E(T)$, the number of which is equal to the dimension of the cycle space $\nu$. 
	For the original Vismara families, breadth-first search is run once per node. 
	In our simulations of hydrocarbon pyrolysis, the dimension of the cycle space ranges from $0.01 n \le \nu \le 0.2 n$. 
	This reduces the number of times we traverse the graph by a scaling factor that depends on initial conditions.
	 
	For diamond, with mean degree $\langle k\rangle=4$, the dimension of the cycle space is $\nu = m - n + 1 \sim n$ since $n\langle k\rangle = 2m$. Thus, even for reasonably dense materials, the number of times we traverse the graph does not increase. This result does not extend to arbitrarily dense graphs where the assumption $m=O(n)$ fails.
	
	\item \textbf{Simple descriptors.} 
	By modifying the original V-families, the descriptors of even cycles change from a node and connected triple to a pair of edges. Connected triples are harder to work with, e.g.,
	the number of edges $m$ is given whereas the number of connected triples as a function of the degree sequence $\{k_i\}$ is given by $\sum_i \binom{k_i}{2}$.
\end{itemize}

All relevant cycles belong to a V'-family, but not all V'-families contain relevant cycles.
The method for filtering out irrelevant V'-families is discussed later in Section~\ref{sub:DICE_TICE_Comp}. 

\subsubsection{An algorithm for computing V'-families}



\begin{algorithm*}
	\footnotesize
	\KwIn{$\bullet\ G$ = an undirected, unweighted graph,\newline
		$\bullet\ \calF$ = fundamental cycle basis of $G$ with cycles associated with edges ${e_j}\in E(G)\setminus E(T),$ 
		\newline\phantom{$\bullet\ \calF$ = }
		$j=1,\hdots,\nu$, where $T$ is a spanning tree of $G$}
	\KwOut{$\bullet$ V'-families represented by their descriptors,\newline
		$\bullet\ D_j=$ a directed, acyclic network for computing shortest paths to $e_{j}$ for $j=1,\hdots,\nu$,\newline
		$\bullet$ node properties {$\tt ancestor_j$}, {$\tt distance_j$}, {$\tt num\_paths_j$} for $j=1,\hdots,\nu$\;}
	
	\For{${e_j}=({\tt u_j,v_j})$ {\normalfont in} $E(G)\setminus E(T)$}{
	Initialize root nodes $\tt ancestor_j[u_j]\gets u_j, distance_j[u_j]\gets 0, num\_paths_j[u_j]\gets 1,$ and $\tt ancestor_j[v_j]\gets v_j, distance_j[v_j]\gets 0, num\_paths_j[v_j]\gets 1$\;
	For each remaining node $\tt x$, mark $\tt observed[x]\gets False$ and $\tt valid[x]\gets True$\;
	Initialize (FIFO) $\tt queue\gets [u_j,v_j]$\;
	\For{{\tt p} {\normalfont in} $\tt queue$}{
		\# {\tt p} is a descriptor for an odd V'-family if it has two ancestors $\tt u_j$ and $\tt v_j$\;
		\uIf{$\tt ancestor_j[p]=\{u_j,v_j\}$ {\normalfont\textbf{and}} $\tt valid[p]$}{
			Count shortest paths from $\tt p$ to $\tt u_j$: $\tt n_1=\sum_{x, p\to x\in D_j}num\_paths_j[x]\cdot \textbf{1}[ancestor_j[x]=u_j]$\;
			Count shortest paths from $\tt p$ to $\tt v_j$: $\tt n_2=\sum_{x, p\to x\in D_j}num\_paths_j[x]\cdot \textbf{1}[ancestor_j[x]=v_j]$\;
			\uIf{$\tt n_1 n_2>0$}{
				Store the odd V'-family with descriptors $\tt e_{j}$ and $\tt p$ representing $\tt n_1 n_2$ cycles of length $\tt 2distance_j[p]{+}1$\;
			}
		}
		\For{$\tt q$ {\normalfont neighboring} $\tt p$ {\normalfont in} $G$}{
			\# {\tt p} is {\tt q}'s parent and {\tt q} is unobserved, initialize {\tt q}'s node properties\;
			\uIf{{\normalfont\textbf{not}} $\tt observed[q]$ }{
				Mark $\tt observed[q]\gets True$ and add $\tt q$ to $\tt queue$\;
				$\tt ancestor_j[q]\gets ancestor_j[p]$\;
				$\tt distance_j[q]\gets distance_j[p]+1$\;
				\uIf{${\tt(p,q)} = {\tt e_k}\in E(G)\setminus E(T)$  {\normalfont where} $k>j$}{
					$\tt num\_paths_j[q]\gets 0$\;
				}\uElse{
					$\tt num\_paths_j[q]\gets num\_paths_j[p]$\;
					Add edge $\tt q\to p$ to $D_j$\quad\# child to parent\;
				}
			}
			\# {\tt p} is {\tt q}'s parent and {\tt q} has been observed, update {\tt q}'s node properties\;
			\uElseIf{$\tt distance_j[q]=distance_j[p]+1$}{
				$\tt ancestor_j[q]\gets ancestor_j[q]\cup ancestor_j[p]$\;
				\uIf{$\tt ancestor_j[p] = \{u_j,v_j\}$}{
					Mark {\tt q} invalid\;
				}
				\uIf{$\tt num\_paths_j[p]>0$ {\normalfont\textbf{and} \textbf{not}} ${\tt(p,q)} = e_k\in E(G)\setminus E(T)$  {\normalfont where} $k>j$}{
					$\tt num\_paths_j[q] \gets num\_paths_j[q]+num\_paths_j[p]$\;
					Add edge $\tt q\to p$ to $D_j$\;
				}
			}
			\# ({\tt p,q}) is a descriptor of an even V'-family if {\tt p} and {\tt q} have different ancestors and are equidistant to $\tt e_j$, $\tt ancestor_j[p]=u_j$ to break symmetry\;
			\uElseIf{$\tt ancestor_j[p]=u_j$ {\normalfont\bf and} $\tt ancestor_j[q]=v_j$}{
				\uIf{{\normalfont \textbf{not}} ${\tt(p,q)} = e_k\in E(G)\setminus E(T)$  {\normalfont where} $k>j$}{
					\uIf{$\tt num\_paths_j[p]\cdot num\_paths_j[q]>0$}{
						Include the even C'-family with descriptors $\tt e_{j}$ and $\tt (p,q)$ representing $\tt num\_paths_j[p]\cdot num\_paths_j[q]$ cycles of length $\tt distance_j[p]+distance_j[q]+2$
					}
				}
			}
		}
		}
	}
	\caption{Compute V'-families }
	\label{alg:vis_fam}
\end{algorithm*}

\begin{figure}[t]
	\centering
	\includegraphics{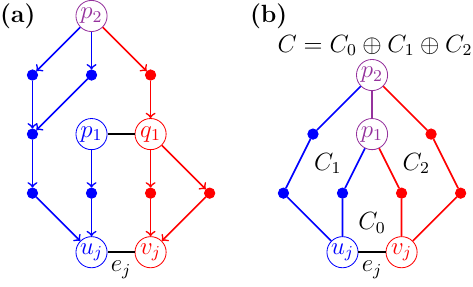}
	\caption{\textbf{(a)} The directed acyclic graph with edges pointing along shortest paths towards the root edge $e_j=(u_j,v_j)$. 
	There are two V'-families: one with descriptors $e_j$ and $(p_1,q_1)$ representing 2 cycles of length 6, and one with descriptors $e_j$ and $p_2$ representing 4 cycles of length 9.	
	\textbf{(b)} An odd V'-family with descriptors $e_j$ and $p_2$ where $p_2$ is a descendant of $p_1$ equidistant to $u_j$ and $v_j$. The paths from $p_2$ to $u_j$ and $v_j$ in the outer cycle $C$ are exchanged for paths through $p_1$ by adding $C_1$ and $C_2$ to $C$ respectively. This shows $C$ is not relevant because it is the sum of shorter cycles $C_0,C_1,C_2$.
	}
	\label{fig:ModVisFamInfo}
\end{figure}


We assume a spanning tree $T$ is given and the edges $E(G)\setminus E(T)$ are enumerated. 
Algorithm~\ref{alg:vis_fam} computes the V'-families by looping over each edge $e_j\in E(G)\setminus E(T)$.



Algorithm~\ref{alg:vis_fam} outputs V'-families represented by their descriptors from Definition~\ref{def:mod_vis_fams}.
The largest-indexed edge $e_j=(u_j,v_j)$ is given, and the $p$ and $(p,q)$ descriptors are computed. 
Nodes $p$ for odd V'-families are those that are equidistant to $u_j$ and $v_j$, obtained in lines 7-11 of Algorithm~\ref{alg:vis_fam}. 
Edges $(p,q)$ for even V'-families are such that $p$ is closer to $u_j$ than $v_j$, $q$ is closer to $v_j$ than $u_j$, and $d(p,u_j)=d(q,v_j)$, obtained in lines 32-35.

\begin{figure*}
	\centering
	\includegraphics[width=6.5in]{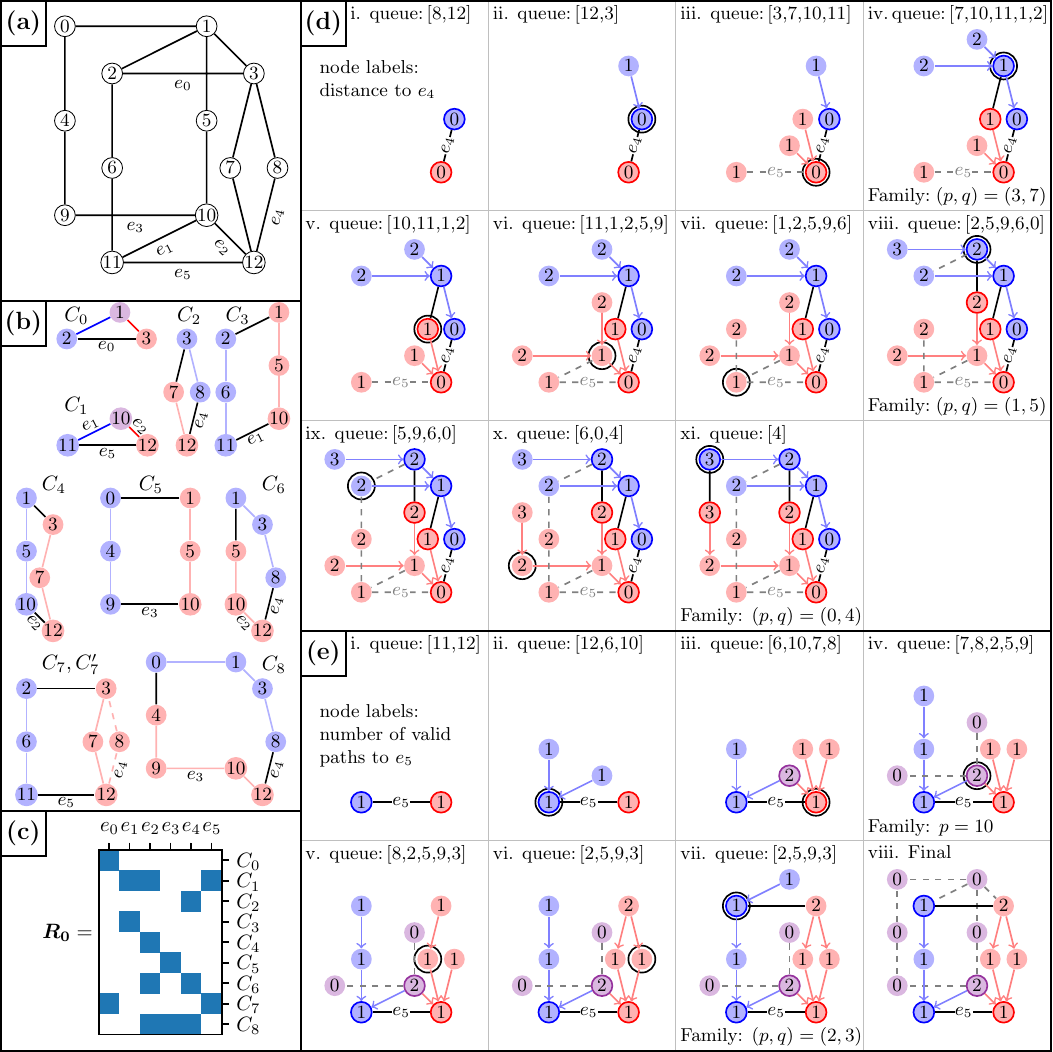}
	\caption{\textbf{(a)} A graph composed of a six-sided loop joined to a triangular prism with two paths $(3,7,12)$ and $(3,8,12)$ connecting $3$ and $12$. The enumerated edges $e_0,\hdots,e_5$ are used to construct a fundamental cycle basis. 
		\textbf{(b)} V'-families sorted by length 
		output by Algorithm~\ref{alg:vis_fam} iterated over $e_0,\hdots,e_5$. 
	The only V'-family with multiple cycles is $\{C_7,C_7'\}$.
	The secondary cycle $C_7'$ is plotted using dashed edges. 
	\textbf{(c)} Matrix of representative cycles from each V'-family as fundamental cycle basis vectors.
	\textbf{(d)} Depiction of Algorithm~\ref{alg:vis_fam} rooted at $e_4$. Each frame represents a step through the central for loop over nodes $\tt p$ (circled in black). Steps ${\tt p}=5,6,4$ are omitted.
	Node colors denote ancestory, node labels denote distance to $e_4$, and directed edges point along valid shortest paths to $e_4$.
	\textbf{(e)} Depiction of Algorithm~\ref{alg:vis_fam} rooted at $e_5$, where node labels denote the number of valid shortest paths to $e_5$.
	}
	\label{fig:VisFamEx}
\end{figure*}


The cycles in a V'-family are constructed by joining the pair of descriptors by shortest paths as in Figure~\ref{fig:mod_vis_fams}. Algorithm~\ref{alg:vis_fam} computes shortest paths from all nodes to $e_j = (u_j,v_j)$ using breadth-first search. Here, we define the distance to $e_j$ as
\begin{equation}
	d(x,e_j) = \min(d(x,u_j),d(x,v_j)),
\end{equation}
and a shortest path from $x$ to $e_j$ as a minimum length path from $x$ to either $u$ or $v$.
As a data structure, we store a directed acyclic graph whose edges point along shortest paths to $e_j$ -- see Figure~\ref{fig:ModVisFamInfo} (a).
We call a neighbor $q$ of node $p$ a child of $p$ if it is further from the root edge $e_j$. 
Edges in the directed acyclic graph point backwards from child to parent, which are assigned in lines 22 and 30.

We compute three additional node properties that further characterize the V'-families:



\begin{itemize}
		\item \textbf{Ancestor.} 
		A node's ancestor is the node $u_j$ or $v_j$ in the root edge $e_j$ closest to it. 
		If a node is equidistant to $u_j$ and $v_j$, then both $u_j$ and $v_j$ are its ancestors.
		The ancestors of a child node $q$ is the union of its parent's ancestors, computed in lines 16 and 25.
		Descendants of a node with both ancestors produce irrelevant cycles that can be decomposed into smaller cycles. See, Figure~\ref{fig:ModVisFamInfo} (b). We mark these descendants invalid in lines 26-27.
		
		\item \textbf{Distance.} 
		A node $q$ will first be observed by a neighbor $p$ lying on a shortest path to $e_j$. Thus, $q$ is one edge further from $e_j$ than its parent node $p$, as in line 15. This distance is used to compute the lengths of cycles in a family, as in lines 11 and 35. 
		\item \textbf{Number of paths.} 
		We count shortest paths from each node $q$ to $e_j$ as the sum of paths through its parents. See lines 19 and 26.
		This is used to compute the number of cycles in a V'-family, which is equal to the number of shortest path pairs to $u_j$ and $v_j$. 
		The size of a cycle family is computed in lines 8-11 and 35.
\end{itemize}

As a final note, in each loop of Algorithm~\ref{alg:vis_fam} we compute cycles with largest-indexed edge $e_j$. We reject any path that includes an edge $e_k\in E(G)\setminus E(T)$ with index $k>j$ as in lines 16-17, 23-24, and 29. We still traverse edges $e_k$, $k>j$, in Algorithm~\ref{alg:vis_fam} to accurately compute the distance and ancestor node properties.

As a post-processing step, we sample a representative cycle from each V'-family. We order these cycles, and the associated V'-families, by length, i.e., $|C_i|\le |C_j|$ for $i\le j$. See Figure~\ref{fig:VisFamEx} (b). 
We convert each cycle $C_i$ into a fundamental cycle basis vector using the non-tree edges $e_j\in E(G)\setminus E(T)$ that lie in $C_i$. 
Then, we construct a matrix whose rows are representative cycles from each V'-family as fundamental cycle basis vectors:
\begin{equation}
	\bm{R_0}(i,:) = (\bm{1}[e_1{\in} C_i], \bm{1}[e_2{\in} C_i], \hdots, \bm{1}[e_\nu{\in} C_i]),
	\label{eq:R0_def}
\end{equation}
where $C_i$ represents family $i$. 
See Figure~\ref{fig:VisFamEx} (c).

\subsubsection{The computational cost}

The complexity of algorithms in this section depend on the size of $\bm{R_0}$. The number of columns in $\bm{R_0}$ is equal to the dimension of the cycle space $\nu$. The number of rows in $\bm{R_0}$ is equal to the number of V'-families. $\bm{R_0}$ has at least $\nu$ linearly independent rows that can be used to construct an MCB.
The number of V'-families is bounded by
\begin{equation*}
	\begin{split}
		\nu \,&{\le}\, \textbf{\# of V'-families} \\&{\le}\,\big|E(G){\setminus}E(T)\big|\,(\#\text{V'-families per edge})\,{\le}\,\nu^{\mathrlap{2}}.
	\end{split}
\end{equation*}
We justify that Algorithm~\ref{alg:vis_fam} outputs at most $\nu$ cycles in each of its $\nu$ iterations below. This bound is tight, where the complete bipartite graph has $O(\nu^2)=O(n^4)$ cycle families. 

To bound the number of V'-families per iteration of Algorithm~\ref{alg:vis_fam}, consider a breadth-first spanning tree $T_j$ rooted at $e_j$. 
Note, $T_j$ is not the spanning tree $T$ used to construct a fundamental cycle basis.
$T_j$ is a subgraph of the directed acyclic graph $D_j$ used to construct paths to $e_j$, as in Figure~\ref{fig:ModVisFamInfo}, where each node has {out-degree} 1. 
The edge descriptor $(p,q)$ for an even V'-family lies outside $T_j$. 
The node descriptor $p$ for an odd V'-family has disjoint shortest paths to $u_j$ and $v_j$, so it has out-degree greater than or equal to 2 in $D_j$. 
Hence, one of these outgoing edges lies outside of $T_j$.
In both cases, we associate an edge outside of the tree $T_j$ with the descriptor $p$ or $(p,q)$, and so the number of V'-families with root edge $e_j$ is bounded by the number of non-tree edges $\nu$.

In summary, $\bm{R_0}$ is a $\{0,1\}^{O(\nu^2)\times\nu}$ matrix. 
Assuming dense matrix operations, the computational cost of the algorithms in the following sections is $O(\nu^4)$, i.e., the cost of na\"ively multiplying $\bm{R_0}$ by a $\nu\times\nu$ matrix.
However, we find $\bm{R_0}$ is not too large and typically very sparse. 
The number of rows in this matrix is typically closer to the dimension of the cycle space. 
In a loop cluster sampled from a carbon-rich simulation of hydrocarbon pyrolysis (initial composition $\rm C_{10}H_{16}$), the dimension of the cycle space is $\nu=1304$ and the number of cycle families is $1604$.
This observation agrees with other chemical systems where the number of relevant cycles is approximately equal to $\nu$, see Table 8 in Ref.~\cite{mayEfficient2014}.
Also, relevant cycles are typically short, bounding the number of fundamental cycle basis edges $e_j\in E(G)\setminus E(T)$ they have. 
As a result, their fundamental cycle basis vectors and the matrix $\bm{R_0}$ are very sparse. The $1604\times 1304$ $\bm{R_0}$ matrix described above has $7280$ non-zero entries. 
In the case where $\bm{R_0}$ is dense, breadth-first search may be used to compute matrix vector products $\bm{R_0}x$ in $O(\nu^2)$ time, as discussed in Section 5.2 in Ref.~\cite{mehlhornMinimum2010}, which can be used to reduce the complexity of matrix multiplication to $O(\nu^3)$.

Using our Python codes on a Ryzen 9 9900X CPU, the computation of the matrix $\bm{R_0}$ for the system mentioned above takes approximately 5 seconds.
For comparison, a sparse matrix-matrix product $\bm{R_0 W}$ is computed in less than 0.5 seconds. Indeed, Algorithm~\ref{alg:vis_fam} is the main bottleneck of our approach. The complexity of a single iteration of Algorithm~\ref{alg:vis_fam} is equal to the complexity of breadth-first search, i.e., $O(m)$. The total cost of Algorithm~\ref{alg:vis_fam} is $O(\nu m)$. 
For the $\rm C_{10}H_{16}$ system, $\nu m = 0.32 n^2$.
For diamond, as a proxy for dense material systems, $\nu m\approx 2n^2$. Therefore, it is reasonable to write the complexity as $O(n^2)$ for material networks. This procedure can be further accelerated by running Algorithm~\ref{alg:vis_fam} in parallel, because no communication between iterations over $e_j$ in Algorithm~\ref{alg:vis_fam} is required.

\subsection{Minimum cycle basis}
\label{sub:MCB_comp}


Our tests for relevance and for the {\tt pi} and {\tt sli} relations rely on expanding cycles in terms of an MCB. See Lemma~\ref{Lem:Relevant_MCB_Test}, Theorem~\ref{Thm:DirInt_Comp}, and Lemma~\ref{Lem:TrEx_Alg} respectively. 
Here, we construct an MCB.

There is a large body of literature dedicated to the computation of MCBs. See Refs.~\cite{kavithaCycle2009,mehlhornMinimum2010} for a review. 
We opt for the approach in Ref.~\cite{kavithaFaster2004}, which we review below. This approach is computationally efficient, easy to implement in our present setting, and can be used to expand cycles in terms of the MCB.

This approach relies on an orthogonality argument using the inner product of binary vectors in the GF(2) field:
\begin{equation}
	\langle A,B\rangle_\mathcal{F} := \bigoplus_i a_i b_i.
\end{equation} 
We emphasize that this inner product is defined in terms of a fundamental cycle basis $\calF$. In this context, a cycle $C$ should be interpreted as a binary vector in $\{0,1\}^\nu$ where entry $i$ indicates whether $C$ contains the $i$th edge outside of the spanning tree used to construct $\calF$.
See Figure~\ref{fig:FCB}~(b) for a reminder of this decomposition.


This construction of an MCB is iterative. Suppose cycles $B_1,\hdots, B_{j-1}$ belonging to an MCB are given. 
To obtain the next basis cycle, we utilize a \emph{witness vector} defined as a non-zero vector $S_j\in\{0,1\}^\nu$ satisfying the orthogonality condition
\begin{equation}
	\langle B_i, S_j\rangle_\calF = 0,\quad i<j.
	\label{eq:Wit_Orth}
\end{equation}
{We construct the vectors $S_j$ later in this section.}
The next cycle $B_j$ in the MCB is given by the optimization problem
\begin{equation}
	\min_{B\in\CycSp} |B|,\text{ such that }\langle{B,S_j}\rangle_\calF =1
	\label{eq:len_const_opt}
\end{equation}
where $\CycSp$ is the set of all cycles. 
We obtain this cycle from the first row of the matrix $\bm{R_0}$ from Eq.~\eqref{eq:R0_def} with inner product 1.

The last ingredient of this algorithm is to construct the witness vectors $S_j$. 
These vectors are computed dynamically, where the initial vectors $S_j,j=1,\hdots,\nu$, are the unit vector with value $1$ at coordinate $j$.
The witness vectors are updated each time a new basis vector $B_j$ is constructed. 
The future witness vectors are assumed to be orthogonal to the previous cycles, i.e., $\langle B_j,S_k\rangle=0$ for $j<k$.
At step $j$ if $\langle B_j,S_k\rangle=1$ for $k>j$ then we update $S_k$ as
\begin{align}
	S_k \gets S_k\oplus S_j.
\end{align}
This ensures $\langle B_j,S_k\rangle=0$ for $k>j$.

\begin{algorithm}[t]
	\KwIn{$\bullet\ G = $ simple, connected graph \newline$\phantom{\bullet\ G=}$ with $\nu$ independent cycles\newline
		$\bullet$ Matrix of cycles $\bm{R_0}$ from V'-\newline $\phantom{\bullet}$ families as fundamental cycle\newline $\phantom{\bullet}$ basis vectors
		}
	\KwOut{$\bullet\ $MCB $\calM = \{B_1,\hdots,B_\nu\}$\newline 
		$\bullet$ Witness vectors $\{S_1,\hdots,S_\nu\}$}
	Initialize $S_i$ to the standard unit vector in $\{0,1\}^\nu$ that is $1$ at coordinate $i$ and $0$ elsewhere\;
	\For{$j=1,\hdots,\nu$}{
		\# Basis cycle $B_j$ as first cycle $C_k$ from $\bm{R_0}$ with $\langle C_k,S_j\rangle_\calF=1$\;
		$B_j\gets$ row $k$ of $\bm{R_0}$ where $k$ is the first non-zero entry of $\bm{R_0} S_j$\;
		\For{$k=j+1,\hdots,\nu$}{
			\# Ensure $\langle C_j,S_k\rangle_\calF=0$ for $j<k$\;
			\uIf{$\langle C_j,S_k\rangle_\calF=1$}{
				$S_k\gets S_k\oplus S_j$\;
			}
		}
	}
	\caption{Compute MCB}
	\label{alg:MCB}
\end{algorithm}

To implement the approach above, we utilize the V'-families constructed in Section~\ref{sub:Vis_Fam}. This is shown in Algorithm~\ref{alg:MCB}. The cycles $B_j$ are drawn from the matrix $\bm{R_0}$ whose row vectors are the expansions of representative cycles from each family expanded with respect to our fundamental cycle basis $\calF$. If $C_k$ is the cycle represented by the $k$th row of $\bm{R_0}$, then the inner product $\langle C_k, S_j\rangle_\calF$ is the $k$th entry in the matrix-vector product $\bm{R_0} S_j$. Also, the cycles in $\bm{R_0}$ are sorted by length. Thus, the optimization problem in Eq.~\eqref{eq:len_const_opt} is solved by the row vector in $\bm{R_0}$ corresponding to the first non-zero entry in $\bm{R_0} S_j$, as in line 4 of Algorithm~\ref{alg:MCB}.

The main operations in Algorithm~\ref{alg:MCB} are matrix-vector products $\bm{R_0}S_j$, inner products $\langle C_j,S_k\rangle$, and vector additions $S_k\oplus S_j$.
If the entries are dense, the time complexity of Algorithm~\ref{alg:MCB} is $O(\nu^4)$. However, the actual cost of these operations is much less because $\bm{R_0}$ is very sparse. Additionally, the matrix-vector products $\bm{R_0} S_j$ only need to be computed up to the first non-zero entry.

\begin{lemma}
	The cycles $B_1,\hdots,B_\nu$ computed by Algorithm~\ref{alg:MCB} form an MCB.
\end{lemma}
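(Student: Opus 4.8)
The plan is to recognize Algorithm~\ref{alg:MCB} as an instance of the witness-based greedy scheme (de Pina / Kavitha et al.) and to verify two things: that $\{B_1,\dots,B_\nu\}$ is a cycle basis, and that it attains the minimum total length. Both rest on an invariant maintained by the witness updates, which I would prove first by induction on $j$: after step $j$ finishes, $\langle B_i,S_k\rangle_\calF=0$ for every $i\le j$ and every $k>j$. The inductive step uses $S_k\gets S_k\oplus S_j$: the guard $\langle B_j,S_k\rangle_\calF=1$ triggers the update that forces $\langle B_j,S_k\rangle_\calF=0$ afterward, while bilinearity together with $\langle B_i,S_j\rangle_\calF=0$ for $i<j$ (already true when $B_j$ is selected) shows the earlier relations survive. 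In particular, at the instant $B_j$ is chosen, $S_j$ satisfies Eq.~\eqref{eq:Wit_Orth}. I would also observe that the updates are invertible elementary operations on the collection $\{S_1,\dots,S_\nu\}$, so this collection stays a basis of $\{0,1\}^\nu$; hence each $S_j$ is nonzero when used and the constraint in Eq.~\eqref{eq:len_const_opt} is satisfiable.

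For the basis claim I would assemble the $\nu\times\nu$ matrix $M$ with $M_{ij}=\langle B_i,S_j\rangle_\calF$. The invariant gives $M_{ij}=0$ for $i<j$, and the selection rule gives $M_{jj}=\langle B_j,S_j\rangle_\calF=1$, so $M$ is lower triangular with unit diagonal and invertible over $GF(2)$. Linear independence follows: if $\bigoplus_i c_iB_i=\emptyset$ then $c^\top M=0$, forcing $c=0$. Since $\nu=\dim\CycSp$, the independent set $\{B_1,\dots,B_\nu\}$ is a cycle basis.

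Before the minimality argument I would settle the subtle point that selecting $B_j$ from the sorted rows of $\bm{R_0}$ really solves Eq.~\eqref{eq:len_const_opt} over all of $\CycSp$. Let $\ell$ be the minimum length of any cycle feasible against $S_j$, and let $B^\star$ achieve it. Then $B^\star$ is relevant: otherwise $B^\star=\bigoplus_k D_k$ with all $|D_k|<\ell$, and since $\bigoplus_k\langle D_k,S_j\rangle_\calF=1$ some $D_k$ would be a strictly shorter feasible cycle, contradicting minimality of $\ell$. Being relevant, $B^\star$ lies in a V'-family whose representative $B'$ in $\bm{R_0}$ has the same length $\ell$ and differs from $B^\star$ by strictly shorter cycles (cf.\ Eq.~\eqref{eq:Ce_Ex_Len}); the same parity argument as above shows $\langle B',S_j\rangle_\calF=1$, for otherwise feasibility would again be inherited from a shorter cycle. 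Thus $\bm{R_0}$ contains a feasible cycle of length $\ell$, and the first non-zero entry of $\bm{R_0}S_j$ returns one of optimal length.

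The structural heart is minimality, which I would prove by an exchange induction: for each $j$ there is an MCB $\calM_j$ with $\{B_1,\dots,B_j\}\subseteq\calM_j$. Take $\calM_0$ any MCB and assume $\calM_{j-1}$. Expanding $B_j$ in $\calM_{j-1}$ (Definition~\ref{def:ex_op}) and pairing with $S_j$ gives $\bigoplus_{C\in\expand{\calM_{j-1}}{B_j}}\langle C,S_j\rangle_\calF=1$; since $\langle B_i,S_j\rangle_\calF=0$ for $i<j$, some $C\in\expand{\calM_{j-1}}{B_j}\setminus\{B_1,\dots,B_{j-1}\}$ has $\langle C,S_j\rangle_\calF=1$. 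By Lemma~\ref{Lem:Bas_Ex} the swap $\calM_j=(\calM_{j-1}\setminus\{C\})\cup\{B_j\}$ is a basis still containing $B_1,\dots,B_j$. Because $C$ is feasible for Eq.~\eqref{eq:len_const_opt}, minimality of $B_j$ gives $|B_j|\le|C|$, so ${\sf Cost}(\calM_j)={\sf Cost}(\calM_{j-1})-|C|+|B_j|\le{\sf Cost}(\calM_{j-1})$; minimality of $\calM_{j-1}$ forces equality, making $\calM_j$ an MCB. After $\nu$ steps $\calM_\nu$ is an MCB containing the $\nu$ independent cycles $B_1,\dots,B_\nu$, so $\calM_\nu=\{B_1,\dots,B_\nu\}$ is itself an MCB. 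The step I expect to demand the most care is the candidate-set sufficiency of the previous paragraph, since the greedy exchange is only valid once one knows the restricted search over $\bm{R_0}$ never misses a genuine length-constrained minimizer.
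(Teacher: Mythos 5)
Your proof is correct and its core --- the exchange induction showing each prefix $\{B_1,\hdots,B_j\}$ extends to an MCB $\calM_j$ by swapping in $B_j$ for an equal-length cycle $B\in\expand{\calM_{j-1}}{B_j}$ that is non-orthogonal to $S_j$ --- is exactly the argument the paper gives. The only differences are that you additionally verify the witness invariant, the nonvanishing of the $S_j$, the linear independence of the output, and (most usefully) that restricting the length-constrained minimization of Eq.~\eqref{eq:len_const_opt} to the rows of $\bm{R_0}$ misses no genuine minimizer, a point the paper's proof takes for granted.
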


\begin{proof}
	The correctness of Algorithm~\ref{alg:MCB} is proved via induction. Suppose $B_1,\hdots,B_{j-1}$ for $j\ge 1$ belong to an MCB $\calM_{j-1}$. The following identity is obtained by expanding $B_j$ in $\calM_{j-1}$ and taking the inner product with $S_j$
	\begin{equation}
		1 = \langle B_j,S_j\rangle_\calF = \enspace\bigoplus_{\mathclap{\calB\in\expandd{\calM_{j-1}}{B_j}}}\enspace \langle B,S_j\rangle_\calF.
	\end{equation}
	The cycles $B_1,\hdots,B_{j-1}$ are orthogonal to $S_j$. Hence, there exists $B\in\calM_{j-1}\setminus\{B_1,\hdots,B_{j-1}\}$ in the expansion of $B_j$ in $\calM_{j-1}$ such that $\langle B,S_j\rangle_\calF=1$. $|B|\le |B_j|$ because $B$ is in the expansion of $B_j$ in $\calM_{j-1}$. $|B_j|\le|B|$ because $B_j$ is a minimum length cycle such that $\langle B_j,S_j\rangle_\calF=1$. Therefore, $\calM_j=(\calM_{j-1}\setminus\{B\})\cup\{B_j\}$ is an MCB by Lemma~\ref{Lem:Relevant_MCB_Test}. By induction, $\calM_\nu=\{B_1,\hdots,B_\nu\}$ is an MCB.
\end{proof}

\subsection{Change of basis}
\label{sub:MCB_COB}

Next, we describe our procedure to compute the expansion of a cycle $C$ in terms of an MCB $\calM=\{B_1,\hdots,B_\nu\}$ computed by Algorithm~\ref{alg:MCB}.
We assume $C$ is given in vector form corresponding to its expansion with respect to the fundamental cycle basis $\calF$ used to construct $\calM$. 
Let the expansion of $C$ with respect to $\calM$ be given by
\begin{equation}
	C = b_1 B_1\oplus\hdots\oplus b_\nu B_\nu,\quad b_i\in\{0,1\},\label{eq:C_exp_MCB_0}
\end{equation}
where the coefficients $b_i$ are unknown.
We will compute these coefficients using the witness vectors $S_i$.

\begin{algorithm}[t]
	\KwIn{$\bullet$ Matrix of cycles $\bm{R_0}$ from V'-\newline $\phantom{\bullet}$ families as fundamental cycle\newline $\phantom{\bullet}$ basis vectors\newline
		$\bullet$ MCB $\calM=\{B_1{,}\hspace{1pt}.\hspace{1pt}.\hspace{1pt}.\hspace{1pt},B_\nu\}$ 
		\newline
		$\bullet$ Witness vectors $S_1,\hdots,S_\nu$}
	\KwOut{$\bullet$ Matrix of cycles $\bm{R_1}$ from\newline $\phantom{\bullet}$ V'-families as MCB vectors}
	Initialize 
	$\tilde{S}_i\gets S_i$ for $i=1,\hdots,\nu$\;
	\For{$k = \nu,\nu-1,\hdots,2$}{
		\# $\tilde{S}_k$ is known, update $\tilde{S_j}$ for $j<k$\;
		\For{$j = 1,\hdots,k-1$}{
			\uIf{$\langle B_k, S_j\rangle_\calF=1$}{
				$\tilde{S_j}\gets \tilde{S}_j\oplus\tilde{S}_k$\;
			}
		}
	}
	$\bm{W} \gets [\tilde{S}_1,\hdots,\tilde{S}_\nu]$\;
	$\bm{R_1} \gets \bm{R_0} \bm{W}$\;
	\caption{Compute MCB representation}
	\label{alg:COB}
\end{algorithm}

We start with the coefficient $b_\nu$. By construction, $S_\nu$ is orthogonal to every cycle in $\calM$ except $B_\nu$. Thus, 
\begin{equation}
	\langle C,S_\nu\rangle_\calF = b_\nu.
	\label{eq:bnu}
\end{equation}

For the coefficient $b_{\nu-1}$ we use the witness vector $S_{\nu-1}$ which is orthogonal to $B_1,\hdots,B_{\nu-2}$ by Eq.~\eqref{eq:Wit_Orth}. We take the inner product of Eq.~\eqref{eq:C_exp_MCB_0} with $S_{\nu-1}$ and substitute $\langle C,S_\nu\rangle_\calF$ for $b_\nu$ using Eq.~\eqref{eq:bnu}
\begin{equation}
	\langle C,S_{\nu-1}\rangle_\calF = b_{\nu-1} \oplus \langle C,S_\nu\rangle_\calF \langle B_\nu,S_{\nu-1}\rangle_\calF.
\end{equation}
Then, we rearrange and solve for $b_{\nu-1}$
\begin{equation}
	b_{\nu-1} = \langle C,S_{\nu-1}\rangle_\calF \oplus \langle C,S_\nu\rangle_\calF \langle B_\nu,S_{\nu-1}\rangle_\calF.\label{eq:bnum1}
\end{equation}
We move the inner product $\langle C_\nu,S_{\nu-1}\rangle_\calF$ into the right term of $\langle C,S_\nu\rangle_\calF$ by linearity. After doing so -- and combining similar terms -- we find that Eq.~\eqref{eq:bnum1} simplifies into a single inner product
\begin{align}
	b_{\nu-1} &= \langle C, \tilde{S}_{\nu-1}\rangle_\calF,\\ \tilde{S}_{\nu-1} &= S_{\nu-1}\oplus\langle B_\nu,S_{\nu-1}\rangle_\calF S_\nu.
\end{align}
We call $\tilde{S}_{\nu-1}$ a \emph{modified witness vector}.

In general, the coefficients $b_j$ can be computed via the inner product of $C$ with a modified witness $\tilde{S}_j$. This can be verified by induction where Eq.~\eqref{eq:bnu} is the base case. If we assume that $b_k = \langle C,\tilde{S}_k\rangle$ for $k>j$ then the inner product of Eq.~\eqref{eq:C_exp_MCB_0} with $S_j$ is
\begin{equation}
	\langle C,S_j\rangle_\calF = b_j\oplus\bigoplus_{{k=j+1}}^\nu \underbrace{\langle C,\tilde{S}_k\rangle_\calF}_{b_k} \langle B_k,S_j\rangle_\calF .
\end{equation}
Using the linearity of the inner product, we can combine terms to obtain $b_j$ and the associated modified witness $\tilde{S}_j$
\begin{align}
	b_j &= \langle C,\tilde{S}_j\rangle_\calF,\\ \tilde{S}_j&= S_j\oplus\bigoplus_{{k=j+1}}^\nu\langle B_k,S_j\rangle_\calF \tilde{S}_k.\label{eq:mod_wit}
\end{align}

Once the modified witness vectors $\tilde{S}_j$ are computed using Eq.~\eqref{eq:mod_wit}, it is useful to collect them as the columns of a matrix
\begin{equation}
	\bm{W} = [\tilde{S}_1, \tilde{S}_2,\hdots,\tilde{S}_\nu].
\end{equation}
Then if we left-multiply $\bm{W}$ by our cycle $C$ (as a vector expanded in terms of a fundamental cycle basis) we obtain its expansion in terms of $\calB$
\begin{equation}
	C \cdot\bm{W} = [\langle C{,}\hspace{1pt}\tilde{S}_1\rangle_\calF{,}\hspace{1pt}\langle C{,}\hspace{1pt}\tilde{S}_2\rangle_\calF{,}\hspace{1pt}.\hspace{1pt}.\hspace{1pt}.\hspace{1pt}{,}\hspace{1pt}\langle C{,}\hspace{1pt}\tilde{S}_\nu\rangle_\calF].
\end{equation}
Recall, in Section~\ref{sub:Vis_Fam} we computed the matrix $\bm{R}_0$ whose rows are representative cycles from the V'-families as fundamental cycle basis vectors. If we right-multiply this matrix by $\bm{W}$,
\begin{equation}
	\bm{R}_1 = \bm{R}_0 \bm{W},
	\label{eq:R1_comp}
\end{equation}
then we obtain the expansion of these cycles in terms of our MCB $\calM$.

Algorithm~\ref{alg:COB} computes the matrix $\bm{R_1}$. Lines 2-8 compute the modified witness vectors according to Eq.~\eqref{eq:mod_wit}. The inner products and vector sums in these lines are $O(\nu^3)$. In line 10, $\bm{R_1}$ is computed by Eq.~\eqref{eq:R1_comp}. 
Since $\bm{R_0}$ is a $O(\nu^2)\times \nu$ matrix, a simple implementation of this matrix multiplication is $O(\nu^4)$. In practice, Algorithm~\ref{alg:COB} is very efficient due to sparsity.

\subsection{{\tt pi} and {\tt sli} classes}
\label{sub:DICE_TICE_Comp}

Here we present a method for computing the {\tt pi} and {\tt sli} classes of an arbitrary unweighted, undirected graph. 
In Figure~\ref{fig:DICE_TICE_alg}~(a-d), we show the major steps for this procedure for the graph shown in Figure~\ref{fig:VisFamEx}~(a). 
We describe these steps below.

\begin{figure*}[t]
	\centering
	\includegraphics{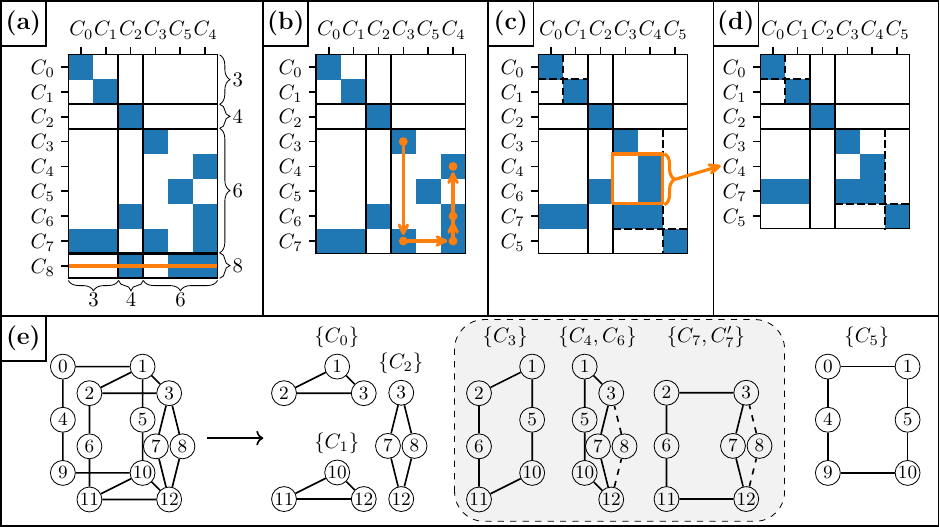}
	\caption{\textbf{(a)} Input matrix of representative cycles from each V'-family as MCB vectors. The blocks of this matrix separate cycles of differing lengths. \textbf{(b)} 
		Reduced matrix without the irrelevant cycle $C_8$ since it is the sum of shorter cycles. 
		The orange arrows depict the procedure for traversing rows and columns to compute the {\tt pi} classes. \textbf{(c)} The same matrix where the rows and columns are sorted by {\tt pi} classes. The block matrices for the {\tt pi} classes are separated by dashed lines. \textbf{(d)} Output matrix $\bm{R}$ with $C_4$ and $C_6$ merged into a single row representing a {\tt sli} class, since $C_4=C_6\oplus\smCyc$. \textbf{(e)} The graph (left) and its relevant cycles separated into {\tt sli} classes (right). The representative and secondary cycles in the {\tt sli} classes are drawn using solid and dashed edges. The gray box highlights the large {\tt pi} class $\{C_3,C_4,C_6,C_7,C_7'\}$ with multiple {\tt sli} classes.
	}
	\label{fig:DICE_TICE_alg}
\end{figure*}

\textbf{Inputs.} The inputs for our algorithm are (i) the V'-families output by Algorithm~\ref{alg:vis_fam} and (ii) the matrix $\bm{R_1}$ of representative cycles from each V'-family as MCB vectors from Algorithm~\ref{alg:COB}. 
We sort the rows and columns of $\bm{R_1}$ by increasing cycle length. 
We utilize the block structure of this matrix, where the row and column cycles have fixed length. See Figure~\ref{fig:DICE_TICE_alg} (a). 
This block matrix is lower triangular because the MCB representation of a cycle only includes equal or shorter length cycles -- see Lemma~\ref{Lem:Relevant_MCB_Test}.

\textbf{Remove irrelevant cycles.} By Lemma~\ref{Lem:Relevant_MCB_Test}, Statement 2, a cycle is relevant if and only if its MCB representation includes an equal length cycle. In Figure~\ref{fig:DICE_TICE_alg}~(a), the eight-sided cycle $C_8$ is not relevant as the sum of two six-sided cycles $C_4,C_5$ and a four-sided cycle $C_2$. Algorithmically, we loop through each row to find the largest cycle in each MCB vector to check if the row is relevant and remove it otherwise. 

\textbf{{\tt pi} classes.} 
From Lemma~\ref{Lem:dir_intchg_2}, a relevant cycle is {\tt pi}-interchangeable for the equal length cycles in its MCB representation. These relations are identified by the non-zero entries in the diagonal blocks of $\bm{R_2}$, e.g., $C_7$ is {\tt pi}-interchangeable for $C_3$ and $C_4$ in Figure~\ref{fig:DICE_TICE_alg}~(b).
From Theorem~\ref{Thm:DirInt_Comp}, the {\tt pi} classes are computed using the transitive closure of these relations.
This is done by traversing the rows and columns in a diagonal block of $\bm{R_2}$ using a queue -- see the orange arrows in Figure~\ref{fig:DICE_TICE_alg}~(b). 


We permute the rows and columns of $\bm{R_2}$ to group the {\tt pi} classes together.
We also permute rows within a {\tt pi} class by
\begin{enumerate}
	\item number of equal length cycles in the MCB vector, then by
	\item lexicographic ordering of the MCB vectors read \emph{right-to-left}.
\end{enumerate}
In Figure~\ref{fig:DICE_TICE_alg}~(c), the cycle $C_7$ follows $C_6$ because it has one more equal length cycle in its MCB representation. The cycle $C_6$ follows $C_4$ since $[0,1,0,1,0,0]$ follows $[0,1,0,0,0,0]$ in lexocographic ordering. We denote this sorted matrix by $\bm{R_3}$. 

\textbf{{\tt sli} classes.} 
Two cycles belong to the same {\tt sli} class if their MCB vectors are equal up to shorter cycles by Lemma~\ref{Lem:TrEx_Alg}. By using reverse lexicographic ordering, cycles in a {\tt sli} class will appear as consecutive rows of $\bm{R_3}$. See the cycles $C_4$ and $C_6$ in Figure~\ref{fig:DICE_TICE_alg}~(c). We merge cycles in a {\tt sli} class into a single row, and we keep track of the relevant V'-families they represent. We denote this final matrix as $\bm{R}$.

\textbf{Time-Complexity.} Most steps of this procedure can be done using a single pass through the working matrix $\bm{R_i}$. This has time-complexity $O(\text{nnz}(\bm{R_i}))=O(\text{nnz}(\bm{R_1}))$ where $\text{nnz}(\bm{X})$ is the number of nonzero entries in $\bm{X}$. 
The largest exception to this is sorting the rows of each {\tt pi} class using reverse lexicographic ordering. 
Using dense operations, this has $O(\nu^3\log\nu)$ time-complexity, where $\bm{R_2}$ has $O(\nu^2)$ rows and comparing two rows takes $O(\nu)$ time. Using sparse operations, this has $O(\text{nnz}(\bm{R_1})\log\nu)$ time-complexity, where the time to compare rows reduces to the average number of nonzero entries per row.
For our MD simulations, this procedure is an order of magnitude faster than computing the relevant cycle families and an MCB.

\textbf{Output.} The output relevant cycle matrix $\bm{R}$ can be interpreted as follows. Each row of this matrix is a representative cycle for a {\tt sli} class as an MCB vector. The choice of representative cycle only changes these MCB vectors by shorter cycles. Moreover, the entries in the diagonal blocks of $\bm{R}$ do not depend on the choice of representative cycles.

The {\tt pi} classes form diagonal blocks in $\bm{R}$. The rows in these blocks are sorted so that the cycles belonging to the MCB appear first. Because of this, the format of each block is the identity matrix followed by additional rows $[I,X']'$. The polyhedra are obtained via these additional rows $X$. The number of polyhedra obtained is equal to the dimension of the (left) null space of $\bm{R}$, given by the number of rows minus the number of columns in $\bm{R}$. 

For instance, from the row corresponding to $C_7$ in Figure~\ref{fig:DICE_TICE_alg}~(c), we obtain the polyhedron
\begin{equation}
	C_7 = C_3\oplus C_4\oplus\smCyc.
\end{equation}
Here, $\{C_3,C_4,C_7\}$ are the large faces of the triangular prism. The smaller cycles used to construct the polyhedron depend on which representative cycles are sampled from each family. For the cycles given, the full polyhedron is $\{C_0,C_1,C_3,C_4,C_7\}$.
{Codes for computing the decomposition of this graph are given in the notebook {\tt cycle\_decomposition.ipynb} on Github~\cite{ruthCyclic2025}.}

If a {\tt pi} class has multiple polyhedra, then the polyhedra sampled depend on the MCB used to represent the columns of $\bm{R}$. For instance, for the graph shown in Figure~\ref{fig:polyhedron_pair}, this procedure may identify either two cubes or a cube and a large rectangular prism. 
Intuitively, the most desirable polyhedra are the smallest ones, i.e., the cubes in Figure~\ref{fig:polyhedron_pair}.
We leave the challenge of computing smallest polyhedra for future work.

\section{Theory and algorithms for sampling minimum cycle bases}
\label{sec:sampling}

\subsection{Uniform random sampling of minimum cycle bases}
\label{sub:rand_MCB}


\begin{figure}[t]
	\centering
	\includegraphics[]{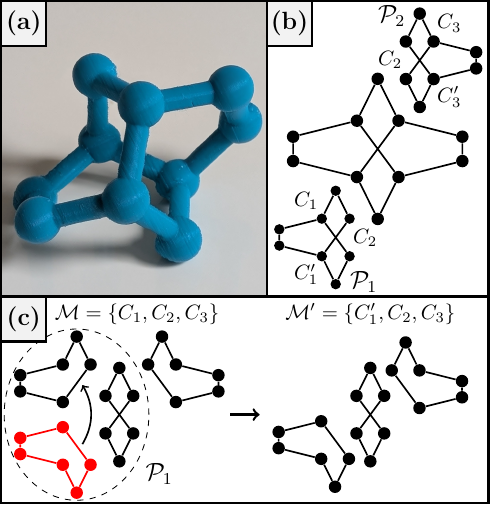}
	\caption{\textbf{(a)}~3D print of the carbon skeleton of twistane, a molecule with formula $\rm C_{10}H_{16}$. \textbf{(b)}~The carbon skeleton of twistane as a graph and its smallest polyhedra $\calP_1=\{C_2,C_3,C_3'\}$ and $\calP_2=\{C_1,C_1',C_2\}$. 
	\textbf{(c)}~Transition from the MCB $\calM=\{C_1,C_2,C_3\}$ to $\calM'=\{C_1',C_2,C_3\}$ by swapping $C_1'$ for $C_1$ using the polyhedron $\calP_1$. 
	}
	\label{fig:randMCB}
\end{figure}

Here, we introduce a method for sampling MCBs uniformly at random using a Markov Chain Monte Carlo approach.
We assume the reader is familiar with the basic theory of Markov chains~\cite{durrettEssentials2012}. 

\subsubsection{The basic algorithm}
A given MCB $\calM$ is randomly modified as follows.
\begin{enumerate}
	\item Randomly select a cycle $C'\in\RelCyc\setminus\calM$ to exchange with a cycle in $\calM$.
	\item Construct a polyhedron using $C'$ and $\calM$, i.e., $\calP = \{C'\}\cup\expand{\calM}{C'}$.
	\item Randomly select a cycle $C{\in}\calP$, $|C|{=}|C'|$, and return 
	$\calM'=(\calM\setminus\{C\})\cup\{C'\}$. 
	If $C=C'$, return the same basis $\calM=\calM'$.
\end{enumerate} 
See Figure~\ref{fig:randMCB}~(c) for an example of this procedure in a graph with multiple polyhedra. These steps are repeated towards equilibrium.

To validate this approach, we make the following observation.
\begin{prop}
	The above Markov chain is ergodic.
\end{prop}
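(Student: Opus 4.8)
The plan is to treat this as a finite-state Markov chain whose state space is the set of all MCBs of $G$ and to verify the three standard ingredients of ergodicity: the transitions stay inside the state space, the chain is aperiodic, and the chain is irreducible. Finiteness is immediate, since $G$ has finitely many cycles and hence finitely many MCBs.

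First I would confirm the chain is well-defined on MCBs. For a step from a state $\calM$, the proposed cycle $C'\in\RelCyc\setminus\calM$ is relevant, so by Statement 2 of Lemma~\ref{Lem:Relevant_MCB_Test} the maximal length appearing in $\expand{\calM}{C'}$ equals $|C'|$. Hence the candidate set $\{C\in\calP:|C|=|C'|\}$ is nonempty (it always contains $C'$), and for every such $C$ the set $(\calM\setminus\{C\})\cup\{C'\}$ is again an MCB by the same lemma. Thus every transition lands in the state space. Aperiodicity then follows at once: because $C'$ itself lies in $\calP$ with $|C'|=|C'|$, the outcome $C=C'$ occurs with positive probability and leaves $\calM$ fixed, so every state carries a self-loop and has period $1$.

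The heart of the argument is irreducibility, which I would prove by inducting on the size of the symmetric difference $|\calM_1\symdiff\calM_2|$ between a current and a target MCB. Lemma~\ref{Lem:DICE_Irreducible} supplies cycles $C_1\in\calM_1\setminus\calM_2$ and $C_2\in\calM_2\setminus\calM_1$ with $(\calM_1\setminus\{C_1\})\cup\{C_2\}$ an MCB, an exchange that strictly decreases the symmetric difference and so drives $\calM_1$ toward $\calM_2$ in finitely many steps. The crux is to show this abstract exchange is realized by a positive-probability step of the chain, namely with $C'=C_2$ and $C=C_1$. Two facts are needed. First, $|C_1|=|C_2|$: since all MCBs have the same cost, removing $C_1$ and adding $C_2$ must preserve total length. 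Second, $C_1\in\expand{\calM_1}{C_2}$: this is the converse direction of the basis-exchange property of Lemma~\ref{Lem:Bas_Ex}, for if $C_1$ were absent from $\expand{\calM_1}{C_2}$ then $C_2$ would lie in the span of $\calM_1\setminus\{C_1\}$ and the swap could not yield a basis, contradicting Lemma~\ref{Lem:DICE_Irreducible}. Given these, $C_1\in\calP=\{C_2\}\cup\expand{\calM_1}{C_2}$ with $|C_1|=|C_2|$, so selecting $C'=C_2$ and then $C=C_1$ carries positive probability.

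I expect the main obstacle to be precisely this translation step. Lemma~\ref{Lem:DICE_Irreducible} guarantees that a length-preserving single-cycle exchange exists, but the chain only performs exchanges of the restricted form ``remove a cycle lying in $\expand{\calM}{C'}$.'' The one nonroutine verification is therefore pinning down, via the converse of the basis-exchange property, that the exchanged cycle $C_1$ must belong to $\expand{\calM_1}{C_2}$; once this is established, iterating the lemma produces a finite positive-probability path from $\calM_1$ to any $\calM_2$, giving irreducibility. Combining finiteness with irreducibility and aperiodicity then yields ergodicity.
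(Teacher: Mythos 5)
Your proof is correct and follows essentially the same route as the paper: aperiodicity from the positive holding probability, and irreducibility by iterating Lemma~\ref{Lem:DICE_Irreducible} to shrink the symmetric difference with the target MCB. Your extra verification that the exchanged cycle $C_1$ lies in $\expand{\calM_1}{C_2}$ with $|C_1|=|C_2|$ — so that the abstract exchange is realized as a positive-probability move of the chain — is a detail the paper's two-paragraph proof leaves implicit, but it is already supplied by the paper's own proof of Lemma~\ref{Lem:DICE_Irreducible}, which explicitly constructs $C_1\in\expand{\calM_1}{C_2}$ of equal length.
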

\begin{proof}
	First, we need the above procedure to be aperiodic. This is true because there is nonzero holding probability, i.e., the probability we return to the same state $P(\calM'=\calM)$. These holding probabilities cause the MCB to leave its current state at a random (geometric) time, which breaks any otherwise periodic behavior in the system.
	
	Second, the system must be irreducible, i.e., any MCB must be reachable by any other. Consider two distinct MCBs $\calM_1$ and $\calM_2$. By Lemma~\ref{Lem:DICE_Irreducible}, we can exchange a cycle in $\calM_1$ for another in $\calM_2$ to obtain a new MCB $\calM_1{\to}\calM_1'$ that shares an additional cycle with $\calM_2$. We may repeat this procedure to obtain $\calM_2$ from $\calM_1$.
\end{proof}

By ergodicity, this Markov chain approaches a stationary distribution over the set of MCBs. This stationary distribution is the uniform distribution because the Markov chain has symmetric transition probabilities
\begin{equation}
	P(\calM{\to}\calM'){=}P(\calM'{\to}\calM){=}\frac{1}{|\RelCyc{\setminus}\calM|}\frac{1}{|\calP|}.
\end{equation}
The first term $1/|\RelCyc{\setminus}\calM|=1/(|\RelCyc|{-}\nu)$ is the constant probability with which we choose a given cycle $C'\in\RelCyc{\setminus}\calM$ in step 1 of the procedure above. 
The second term is the probability we select the cycle $C\in\calP$ to exchange for $C'$. 
The same polyhedron $\calP$ is used in both directions $\calM\to\calM'$ and $\calM'\to\calM$.

\begin{algorithm}[t]
	\KwIn{$\calM=$ input MCB\newline
		$\bm{R}=$ representative cycles for\newline $\phantom{\bm{R}=}$ {\tt sli} classes as vectors in $\calM$} 
	\KwOut{$\calM_r=$ random MCB}
	\For{$i=1,\hdots,N_{\rm steps}$}{
		Randomly sample $C'\notin\calM$ from $\bm{R}$\;
		Using the row for $C'$ in $\bm{R}$ construct the polyhedron $\calP=\{C'\}\cup\expand{\calM}{C'}$\;
		Sample $C\in\calP, |C|=|C'|,$ with probability proportional to $1/|\calS(C)|$\;
		\uIf{$C\neq C'$}{
			$\calM\gets(\calM\setminus\{C\})\cup\{C'\}$\;
			\# Update $\bm{R}$ to new basis\;
			\For{Row cycle $C_i=\bm{R}[i,:]$}{
				\uIf{$C\in\expand{\calM}{C_i}$}{
					Set $\bm{R}[i,:]$ to $\expandd{\calM'}{C_i}=$  
					$ (\expandd{\calM}{C_i}\setminus\{C\})\symdiff\expandd{\calM'}{C}$\;
				}
			}
		}
	}
	
	$\calM_r\gets [\ ]$ \# empty list\;
	\For{$C\in\calM$}{
		Randomly sample a cycle from the {\tt sli} class $\calS(C)$ and add it to $\calM_r$\;
	}
	\caption{Random MCB}
	\label{alg:rand_MCB}
\end{algorithm}



\subsubsection{The efficient algorithm}

We simplify this procedure by drawing cycles from the matrix $\bm{R}$ computed in Section~\ref{sub:DICE_TICE_Comp}. 
Each cycle in $\bm{R}$ represents a {\tt sli} class. 
The MCB we will sample from $\bm{R}$ is used to select a set of {\tt sli} classes to draw cycles from. 
It is valuable to sample cycles from $\bm{R}$ since there is only a polynomial number of {\tt sli} classes. Cycles can be efficiently sampled from a {\tt sli} class directly, which we discuss at the end of this subsection. 


By Corollary~\ref{Cor:TrInt_Exchanges}, we can exchange pairs of cycles within a {\tt sli} class arbitrarily between MCBs. 
Thus, the total number of MCBs we can obtain using valid {\tt sli} classes $\calS_1,\hdots,\calS_\nu$ is $|\calS_1|\cdots|\calS_\nu|$. 
We want the stationary distribution to be proportional to the number of MCBs that $\calM$ drawn from $\bm{R}$ represents,
\begin{equation}
	\pi_\calM \propto \prod_{C\in\calM} |\calS(C)|
	\label{eq:rand_stat_dist}
\end{equation}
where $\calS(C)$ is the {\tt sli} class containing $C$.

To achieve the stationary distribution given by Eq.~\eqref{eq:rand_stat_dist}, we bias the transition probabilities to satisfy the detailed balance condition
\begin{equation}
	P(\calM\to\calM')\pi_\calM = P(\calM'\to\calM)\pi_{\calM'}.\label{eq:rand_det_bal}
\end{equation}
We re-arrange Eq.~\eqref{eq:rand_det_bal} to obtain the desired ratio between the forward and backward transition probabilities
\begin{equation}
	\frac{P(\calM\to\calM')}{P(\calM'\to\calM)} = \frac{\pi_{\calM'}}{\pi_\calM} = \frac{|\calS(C')|}{|\calS(C)|}
\end{equation}
where $\calM'=(\calM\setminus\{C\})\cup\{C'\}$. This is achieved by removing the cycle $C$ from our polyhedron with probability proportional to $1/|\calS(C)|$, as in Algorithm~\ref{alg:rand_MCB}, line 4. Then, the transition probabilities are given by
\begin{equation}
	\begin{split}
	&P(\calM\to\calM') \\&\quad= \frac{1}{\text{dim}(\text{coker}(\bm{R}))}\frac{1/|\calS(C)|}{\sum_{C'\in\calP}1/|\calS(C')|}
	\end{split}\label{eq:rand_trans_prob}
\end{equation}
where $\text{dim}(\text{coker}(\bm{R}))$ is the dimension of the left null space of $\bm{R}$, also the number of row cycles in $\bm{R}$ that do not belong to $\calM$.
Only the numerator $1/|\calS(C)|$ changes in Eq.~\eqref{eq:rand_trans_prob} between the forward ($\calM{\to}\calM'$) and reverse ($\calM'{\to}\calM$) exchange. The same polyhedron $\calP$ is used in both directions.


After performing the exchange $\calM\to\calM'$ we update the rows of $\bm{R}$ to be vectors in $\calM'$. We only need to update the representation of a cycle $C_i=\bm{R}[i,:]$ if its expansion $\expandd{\calM}{C_i}$ includes the cycle $C$ swapped for $C'$. If $C\in\expandd{\calM}{C_i}$, we replace $C$ with its expansion in $\calM'$
\begin{equation}
	\expandd{\calM'}{C_i} = (\expandd{\calM}{C_i}\setminus\{C\})\symdiff\expandd{\calM'}{C}.
\end{equation}

\begin{figure}[t]
	\centering
	\includegraphics{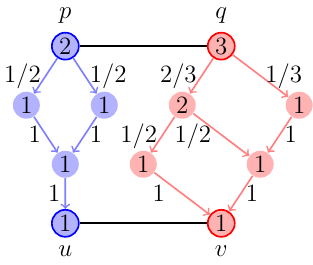}
	\caption{
		Procedure used to randomly sample a cycle from an even cycle family. 
		A cycle is sampled by joining two random paths $p\to u$ and $q\to v$ to the descriptors $(p,q)$ and $(u,v)$. 
		Node labels represent the number of valid shortest paths from a node to $(u,v)$. 
		Edge labels represent the probability a node is selected to construct a shortest path.
}
	\label{fig:rand_fam_backtrack}
\end{figure}

Last, we describe how cycles are sampled from each {\tt sli} class. A {\tt sli} class is the union of disjoint V'-families output by Algorithm~\ref{alg:vis_fam}. 
We choose a V'-family with probability proportional to the number of cycles it has. 
A cycle is sampled from the V'-family by backtracking from either the node $p$ or edge $(p,q)$ to the root edge $(u,v)$. 
At each step, the probability we step through a given node is proportional to the number of paths it has -- see Figure~\ref{fig:rand_fam_backtrack}. The final MCB is obtained by sampling a random cycle from each {\tt sli} class. The whole procedure is summarized in Algorithm~\ref{alg:rand_MCB}.







\subsection{Pairwise intersections in a minimum cycle basis}
\label{sub:pair_intersect}

\begin{figure}[t]
	\centering
	\includegraphics[trim={0 .6cm 0 .6cm},clip]{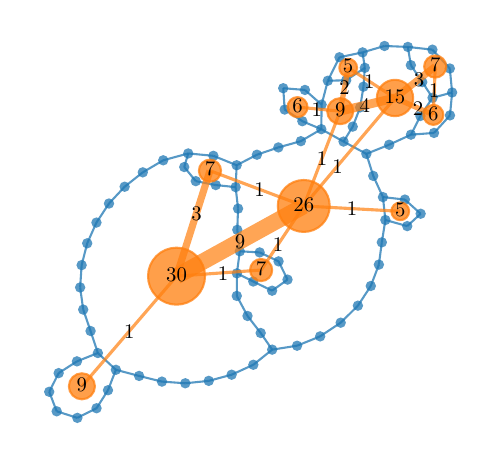}
	\caption{Ring cluster and its dual graph from a $\rm C_8H_{18}$ simulation at 4000K and 40.5GPa. Blue nodes and edges denote carbon atoms and bonds that belong to the cluster. Orange nodes represent carbon rings in the dual graph. Orange edges represent a pair of rings that are joined by a path. The weights of the orange nodes and edges are equal to the number of edges in the corresponding ring or path.}
	\label{fig:dual_graph}
\end{figure}

The overarching goal of this manuscript is to develop mathematical theory that is capable of describing ring clusters in hydrocarbon pyrolysis. So far, we have discussed how to obtain the relevant cycles and how to partition these cycles into a polynomial number of {\tt pi} and {\tt sli} classes. These serve as the building blocks of our ring clusters. In this section, we present new theory for how these cycles intersect. 
This consists of two main results, the proofs of which are contained in Appendix~\ref{app:intersect_proofs}

\begin{enumerate}
	\item We characterize the general format for how pairs of cycles in an MCB may intersect. See Theorem~\ref{Thm:MCB_Intersect}. 
	\item We introduce a method for modifying a MCB $\calM\to\calM'$ such that pairs of cycles in the new MCB $C_i,C_j\in\calM'$ intersect over a single path $P_{i,j}=C_i\cap C_j$. See Theorem~\ref{Thm:MCB_Pair_Swap}. This makes it possible to represent the loop cluster as a graph of cycles, also referred to as a \emph{dual graph}. The nodes of the dual graph are the cycles $\calM'=\{C_1,\hdots,C_\nu\}$, and the edges are the non-empty paths of intersections $\{P_{i,j}\}$. See Figure~\ref{fig:dual_graph}. 
\end{enumerate}


\begin{figure}
	\centering
	\includegraphics{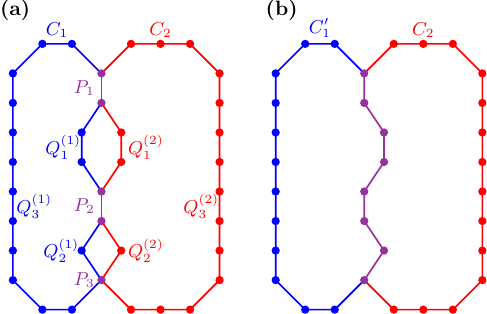}
	\caption{
		\textbf{(a)} Two cycles $C_1,C_2$ that intersect over three paths. Blue nodes belong to $C_1$, red nodes to $C_2$, and purple nodes to both.
		If we traverse $C_1$ clockwise and $C_2$ counterclockwise, then the intersection paths appear in the same order $P_1\,{\to}\,P_2\,{\to}\,P_3$ and with the same orientation.
		The shorter separating paths have equal lengths $\big|Q_1^{(1)}\big|=\big|Q_1^{(2)}\big|$ and $\big|Q_2^{(1)}\big|=\big|Q_2^{(2)}\big|$. \textbf{(b)}~A cycle $C_1'$ that can be exchanged for $C_1$ such that $C_1'$ and $C_2$ intersect over a single path. $C_1'$ is constructed from $C_1$ by swapping the paths $Q_1^{(1)}$ for $Q_1^{(2)}$ and $Q_2^{(1)}$ for $Q_2^{(2)}$}
		\label{fig:Intersect_Thm}
\end{figure}


\begin{theorem}
	Let $\calM$ be an MCB and $C_1,C_2\in\calM$, $|C_1|\le|C_2|$, such that $C_1$ and $C_2$ share at least one vertex.
	There exist circulations of $C_1$ and $C_2$ such that: 
	\begin{enumerate}
		\item $C_1$ and $C_2$ intersect over $k$ paths $P_1,P_2,\ldots,P_k$ that appear with the same order and orientation in both cycles. 
		
		\item Let $Q_{i}^{(j)}$ for $i\,{=}\,1{,}...{,}\,k$, $j\,{=}\,1,2,$ be the path in $C_j$ joining $P_i$ to $P_{i+1}$, where $P_{k+1}\,{\coloneq}\, P_1$. These paths are equal in length between $C_1$ and $C_2$, possibly except for the largest path pair which is ordered last:
		\begin{align}
			|Q^{(1)}_i| &\!=\! |Q^{(2)}_i|, &i=1,\!...,k-1,
			\label{eq:sm_path_equal}\\
			|Q^{(j)}_i| &\!\le\! |C_1|/2,\! &i{=}1,\!...,\!k{-}1, j{=}1,\!2,
			\label{eq:sm_path_len}\\
			|Q^{(2)}_k| &\!=\! \mathrlap{|Q^{(1)}_k|+|C_2|-|C_1|,}
			\label{eq:lg_path_equal}\\
			|Q^{(j)}_k| &\!\ge\! |C_j|/2,\! &j=1,2.
			\label{eq:lg_path_len}
		\end{align}
	\end{enumerate}
	\label{Thm:MCB_Intersect}
\end{theorem}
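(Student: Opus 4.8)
The plan is to work throughout with the symmetric difference $C_e \coloneq C_1\oplus C_2$ and to exploit that, since $C_1,C_2\in\calM$, their expansions are trivial: $\expand{\calM}{C_1}=\{C_1\}$ and $\expand{\calM}{C_2}=\{C_2\}$. First I would pin down the intersection. Because $C_1$ and $C_2$ are simple cycles, every shared vertex has degree two in each, so $C_1\cap C_2$ has maximum degree two; it contains no full cycle (that would force $C_1=C_2$), hence it is a disjoint union of $k\ge 1$ paths $P_1,\dots,P_k$. Deleting the shared paths from each $C_j$ leaves the private arcs; I label by $Q_i^{(j)}$ the arc of $C_j$ between two consecutive shared paths and set $D_i\coloneq Q_i^{(1)}\oplus Q_i^{(2)}$. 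The bookkeeping identities $|C_j|=\sum_i|P_i|+\sum_i|Q_i^{(j)}|$ drive every length computation; subtracting them gives $|C_2|-|C_1|=\sum_i(|Q_i^{(2)}|-|Q_i^{(1)}|)$.

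Next I would normalize the circulations so that the shared paths occur in the same cyclic order and with the same orientation along both cycles. Once this holds, $C_e$ splits as the disjoint union $\bigsqcup_i D_i$ and each pair $Q_i^{(1)},Q_i^{(2)}$ has the same two endpoints. I expect this normalization to be the main obstacle: a priori the shared paths could interleave, or be traversed with mismatched relative orientations, and then no choice of circulation would align them. I would rule this out using minimality of $\calM$ together with the basis-exchange property (Lemma~\ref{Lem:Bas_Ex}): a crossing or an orientation flip makes $C_e$ a single long cycle rather than a union of short gap cycles, and I would extract from it a cycle that, exchanged into $\calM$, strictly lowers $\mathsf{Cost}(\calM)$, a contradiction.

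With the normalization in hand, the heart of the argument is the master identity obtained by expanding $C_e=\bigoplus_i D_i$ in $\calM$: since the expansion operator is linear over $\oplus$, the iterated symmetric difference of the sets $\expand{\calM}{D_i}$ equals $\{C_1\}\symdiff\{C_2\}=\{C_1,C_2\}$. By Statement~1 of Lemma~\ref{Lem:Relevant_MCB_Test}, every cycle in $\expand{\calM}{D_i}$ has length at most $|D_i|$, so $C_1$ and $C_2$ can be contributed only by gaps with $|D_i|\ge|C_1|$. A gap with both sides short, $|Q_i^{(1)}|\le|C_1|/2$ and $|Q_i^{(2)}|\le|C_2|/2$, has, by Lemma~\ref{lem:shortest_paths}, both $Q_i^{(1)}$ and $Q_i^{(2)}$ equal to shortest paths between their common endpoints, whence $|Q_i^{(1)}|=|Q_i^{(2)}|$ and $|D_i|\le 2|Q_i^{(1)}|\le|C_1|$; such a gap contributes no cycle of length $|C_1|$ or $|C_2|$. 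I would then argue that exactly one gap remains "long," relabel it as index $k$, and conclude that every gap $i<k$ is short, which is precisely Eq.~\eqref{eq:sm_path_equal} and Eq.~\eqref{eq:sm_path_len}. Showing that the long gap is unique — equivalently, that the at-most-one long arc of $C_1$ and the at-most-one long arc of $C_2$ sit at the same index — is the second delicate point; I would settle it by the parity count above (a second long gap would force $C_1$ or $C_2$ to cancel out of the symmetric difference) reinforced by the same basis-exchange shortening argument.

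Finally, the remaining claims are pure bookkeeping. Substituting $|Q_i^{(1)}|=|Q_i^{(2)}|$ for $i<k$ into $|C_2|-|C_1|=\sum_i(|Q_i^{(2)}|-|Q_i^{(1)}|)$ collapses the sum to the single term $|Q_k^{(2)}|-|Q_k^{(1)}|$, which is Eq.~\eqref{eq:lg_path_equal}. The bound Eq.~\eqref{eq:lg_path_len} then follows from the uniqueness of the long gap: no shared path can exceed half of $C_1$ (otherwise $C_e$ would contain a single cycle shorter than $C_1$ whose $\calM$-expansion is $\{C_1,C_2\}$, contradicting Lemma~\ref{Lem:Relevant_MCB_Test} exactly as above), and every other gap is at most $|C_1|/2$, so the complementary last arc $Q_k^{(j)}$ must carry at least half of each cycle.
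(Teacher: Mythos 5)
Your algebraic framework (expanding $C_e=C_1\oplus C_2$ gap by gap, then invoking Lemma~\ref{lem:shortest_paths} and length bookkeeping) is a genuinely different route from the paper, which instead embeds both cycles via the distance map $\varphi(x)=(d(x,u),d(x,v))$ for two shared non-antipodal vertices and reads the entire path structure off the resulting overlapping rectangles. But your proposal has a genuine gap at exactly the point you flag as ``the main obstacle'': statement 1 of the theorem --- that the shared paths occur in the same cyclic order and with the same orientation in both cycles --- is asserted rather than proved. Your proposed mechanism (``a crossing or an orientation flip makes $C_e$ a single long cycle \ldots\ extract from it a cycle that, exchanged into $\calM$, strictly lowers $\mathsf{Cost}(\calM)$'') is not an argument. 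In the twisted configuration of Figure~\ref{fig:non-rel_overlap}~(a) the obstruction is not that some piece of $C_e$ can be exchanged into $\calM$ to lower its cost; it is that $C_1$ itself decomposes into strictly shorter cycles and so is not relevant, and seeing this requires knowing where the shared vertices sit \emph{metrically} on both cycles. That is precisely what the paper's distance-preservation machinery (Corollary~\ref{Cor:dist_pres} together with the injectivity of $\varphi$ in Lemma~\ref{Lem:cyc_embed}) supplies: both images are monotone closed curves in the $(d(\cdot,u),d(\cdot,v))$-plane traversed counter-clockwise, which forces order and orientation to agree. Everything downstream in your sketch --- in particular the claim that $Q_i^{(1)}$ and $Q_i^{(2)}$ share endpoints, without which ``both are shortest paths, hence equal length'' does not follow --- depends on this unproved normalization.

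Two smaller but real gaps remain even granting the normalization. First, uniqueness of the long gap: your parity count does not close the case where the (at most one) long arc of $C_1$ and the (at most one) long arc of $C_2$ sit at different indices, nor the boundary case $|Q_i^{(1)}|=|C_1|/2$, in which a ``short'' gap $D_i$ can have $|D_i|=|C_1|$ and hence legitimately contain $C_1$ in its $\calM$-expansion; the paper closes the analogous situation with the {\tt sli} relation (Corollary~\ref{Cor:TrInt_Exchanges}: swapping all short gaps at once would place two members of one {\tt sli} class into a single MCB, contradicting Corollary~\ref{Cor:tr_intchg_rank}). Second, the degenerate configurations --- $C_1\cap C_2$ a single vertex, and two shared vertices antipodal in $C_1$, where the distance embedding (and equally your ``same endpoints'' reasoning) breaks down --- require the separate treatment the paper gives them.
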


\begin{figure}[t]
	\centering
	\includegraphics{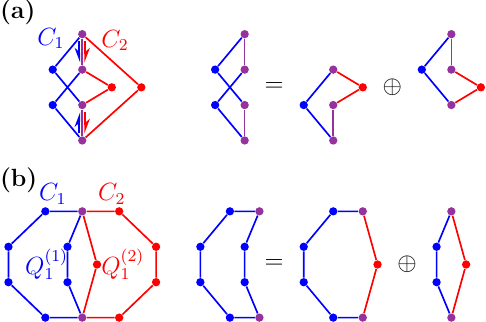}
	\caption{Two pairs of cycles that do not satisfy the format in Theorem~\ref{Thm:MCB_Intersect}. In both cases, the left cycle $C_1$ is not relevant because it is the sum of shorter cycles. 
	\textbf{(a)} Two cycles $C_1,C_2$ that intersect over two paths. 
	The red and blue arrows depict an example pair of orientations of $C_1$ and $C_2$. 
	There are no orientations of $C_1$ and $C_2$ where the intersection paths have the same orientation in $C_1$ and $C_2$. 
	Hence, $C_1$ appears twisted next to $C_2$.
	\textbf{(b)}~Two cycles $C_1,C_2$ that are separated by shorter paths $Q_1^{(1)}$ and $Q_1^{(2)}$ with different lengths $|Q_1^{(1)}|\ne |Q_1^{(2)}|$.}
	\label{fig:non-rel_overlap}
\end{figure}

In summary, Theorem~\ref{Thm:MCB_Intersect} states that a pair of cycles belonging to some MCB may intersect over a long path broken up by shorter cycles. 
See Figure~\ref{fig:Intersect_Thm} (a). 
In general, we may define the intersection of two cycles using a series of paths, which can be stored as a list of lists. 
We emphasize, that Theorem~\ref{Thm:MCB_Intersect} does not apply to arbitrary pairs of cycles. 
See Figure~\ref{fig:non-rel_overlap}.

Our theory allows us to go further. Specifically, we can stitch together a pair of cycles in an MCB so that they intersect over a single path.


%

\begin{theorem}
	Let $\calM$ be an MCB and let cycles $C_1,C_2\in\calM$ share at least two nodes. There exists a cycle $C_1'$ such that 
	\begin{enumerate}
		\item $\calM'=(\calM\setminus\{C_1\})\cup\{C_1'\}$ is an MCB and,
		\item $C_1'$ and $C_2$ intersect over a single path.
	\end{enumerate}
	\label{Thm:MCB_Pair_Swap}
\end{theorem}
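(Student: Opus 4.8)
The plan is to start from the structural description of how $C_1$ and $C_2$ meet, provided by Theorem~\ref{Thm:MCB_Intersect}, and to rebuild $C_1$ into $C_1'$ by importing the short separating arcs of $C_2$. Concretely, I would fix circulations of $C_1,C_2$ as in that theorem, with common paths $P_1,\ldots,P_k$ appearing in the same order and orientation, and separating paths $Q_i^{(1)}\subset C_1$ and $Q_i^{(2)}\subset C_2$, the largest pair ordered last. If $k=1$ the cycles already meet in a single path and there is nothing to do, so I assume $k\ge 2$ and set
$$C_1'\coloneq C_1\oplus\bigoplus_{i=1}^{k-1}E_i,\qquad E_i\coloneq Q_i^{(1)}\oplus Q_i^{(2)},$$
which replaces each short arc $Q_i^{(1)}$ of $C_1$ by the equal-length arc $Q_i^{(2)}$ of $C_2$.

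First I would dispatch the structural claims. Because the $P_i$ occur with the same cyclic order and orientation in both cycles, the walk $P_1,Q_1^{(2)},P_2,\ldots,Q_{k-1}^{(2)},P_k,Q_k^{(1)}$ closes into a simple cycle, and its intersection with $C_2$ is exactly the contiguous arc $P_1\cup Q_1^{(2)}\cup\cdots\cup P_k$, a single path, since $Q_k^{(1)}$ is internally disjoint from $C_2$. Length is preserved, $|C_1'|=|C_1|$, because $|Q_i^{(1)}|=|Q_i^{(2)}|$ for $i<k$ by Eq.~\eqref{eq:sm_path_equal}. Hence only the claim that $\calM'=(\calM\setminus\{C_1\})\cup\{C_1'\}$ is an MCB remains.

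For that claim I would use the following reduction: since the cost is preserved, by Lemma~\ref{Lem:Bas_Ex} it suffices to show $C_1\in\expand{\calM}{C_1'}$, equivalently $C_1\notin\expand{\calM}{E}$ where $E=\bigoplus_{i<k}E_i$. Now $|E_i|\le 2|Q_i^{(1)}|\le|C_1|$ by Eq.~\eqref{eq:sm_path_len}, so Statement~1 of Lemma~\ref{Lem:Relevant_MCB_Test} bounds every cycle appearing in $\expand{\calM}{E_i}$ by $|C_1|$. Whenever all these bounds are strict, $\expand{\calM}{E}$ contains only cycles shorter than $C_1$, so $C_1\notin\expand{\calM}{E}$ and the exchange is valid; this settles every case with $k\ge 3$, and the $k=2$ case whenever $|E_1|<|C_1|$.

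The main obstacle is the equality case, where the length-strictness argument breaks. The only way to have $|E_i|=|C_1|$ is $|Q_i^{(1)}|=|C_1|/2$ with $Q_i^{(1)},Q_i^{(2)}$ internally disjoint; combining this with $|Q_k^{(1)}|\ge|C_1|/2$ from Eq.~\eqref{eq:lg_path_len} and the total length budget forces $k=2$ and reduces $C_1\cap C_2$ to two vertices. Here I would exploit the triangle identity $C_1=C_1'\oplus E_1$: since $C_1$ is independent of $\calM\setminus\{C_1\}$, not both $C_1'$ and $E_1$ can lie in $\Span(\calM\setminus\{C_1\})$, so at least one of them is exchangeable for $C_1$ by Lemma~\ref{Lem:Bas_Ex}. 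Both $C_1'$ and $E_1$ meet $C_2$ in the single path $Q_1^{(2)}$, so I take whichever is exchangeable as the desired $C_1'$; minimality of $\calM$ then forces its length to equal $|C_1|$, so the swap preserves cost and yields an MCB. I expect this degenerate equal-length configuration—essentially a barallene-like triple $\{C_1,C_1',E_1\}$—to be the crux, since it is precisely where one must abandon the length estimate and argue directly from the exchange structure.
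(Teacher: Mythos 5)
Your proposal is correct and follows essentially the same route as the paper: the same construction $C_1'=C_1\oplus\bigoplus_{i<k}(Q_i^{(1)}\oplus Q_i^{(2)})$ with the exchange justified by the strict length bound on the short separating pairs (the paper packages this as $C_1'\slInt C_1$ plus Corollary~\ref{Cor:TrInt_Exchanges}, you unwind it directly through Lemmas~\ref{Lem:Relevant_MCB_Test} and~\ref{Lem:Bas_Ex}), and the same triangle argument $C_1=C_1'\oplus E_1$ in the antipodal two-vertex degenerate case, which the paper likewise isolates as the only situation where the length estimate fails.
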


\noindent The cycle $C_1'$ is obtained by exchanging paths in $C_1$ for $C_2$. See Figure~\ref{fig:Intersect_Thm}~(b) for an example of this procedure.

It is desirable that $C_1$ and $C_2$ intersect over a single path because a path can be stored as a single list. We propose to postprocess an MCB so that all pairs of cycles share at most one path, as in Algorithm~\ref{alg:merge_MCB}. Then, the cluster of cycles may be characterized using a set of cycles and the paths that connect them. This is the dual graph representation shown in Figure~\ref{fig:dual_graph}. We remark that the dual graph as shown in Figure~\ref{fig:dual_graph} is not lossless because it only includes path lengths, not the positions of these paths in the cycles.


\begin{algorithm}[t]
	\KwIn{MCB $\calM$}
	\KwOut{MCB $\calM'$, where all pairs $C_1,C_2\in\calM'$ intersect over a single path}
	Compute the intersection paths for all pairs of cycles $C_i,C_j\in\calM$\;
	\For{$i=1,\hdots,N_{\max}$}{
		Randomly sample a pair $C_i,C_j\in\calM$ that intersect over multiple paths\;
		Exchange $\calM\gets(\calM\setminus\{C_i\})\cup\{C_i'\}$ by Theorem~\ref{Thm:MCB_Pair_Swap} where $C_i',C_j$ intersect over a single path\;
		Recompute intersection paths between $C_i'$ and other cycles\;
	}
	\caption{Postprocess MCB}
	\label{alg:merge_MCB}
\end{algorithm}

At this point, we cannot guarantee that Algorithm~\ref{alg:merge_MCB} always outputs an MCB with every pair of cycles intersecting at most over a single path. It always succeeds on our hydrocarbon data in 10-20 iterations. We leave the investigation of this issue for future work.




\section{Applications to hydrocarbon pyrolysis systems}
\label{sec:results}

\begin{figure*}[t]
	\includegraphics[width=\linewidth]{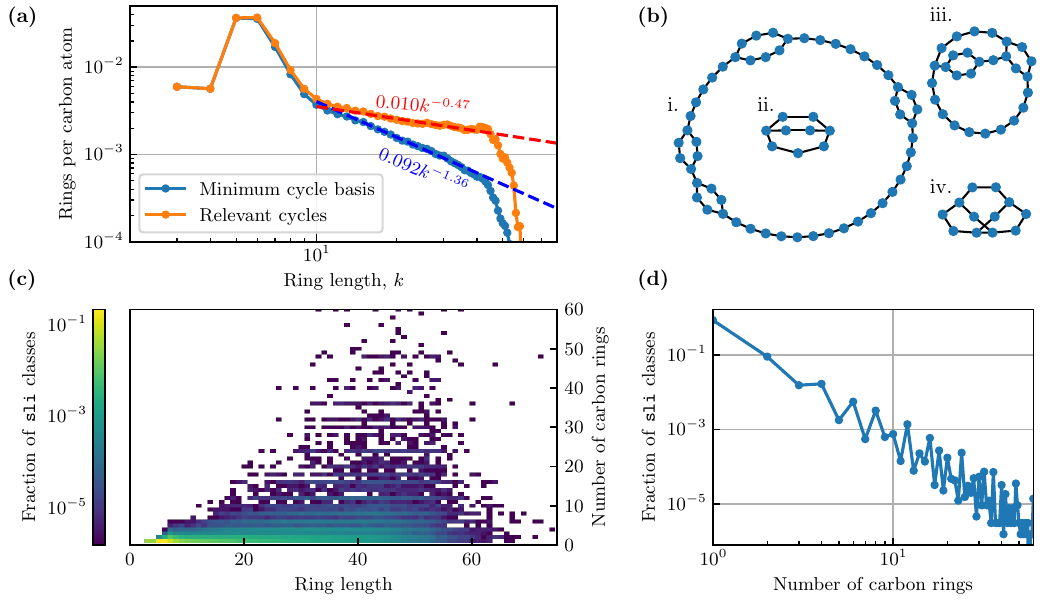}
	\caption{Analysis of cycles for a system initialized to adamantane with formula $\rm C_{10}H_{16}$. \textbf{(a)}~Rate distribution of carbon rings obtained via an MCB (blue) and the relevant cycles (orange). The dashed lines represent a power law fit of these distributions for rings of length $15\le k \le 40$. \textbf{(b)} Four examples of {\tt sli} classes: (i) 16 rings of length 45, (ii) 2 rings of length 7, (iii) 4 rings of length 19, and (iv) 3 rings of length 8. \textbf{(c)} Joint distribution of ring lengths and number of rings in {\tt sli} classes. \textbf{(d)} Distribution of the number of rings in {\tt sli} class.}
	\label{fig:TICE_Res}
\end{figure*}

\subsection{Generating data} 
Here, we apply the theory for the {\tt pi} and {\tt sli} relations to analyze loop clusters in hydrocarbon pyrolysis. 
This data comes from a single simulation using a ReaxFF force field~\cite{ashrafExtension2017} in LAMMPS~\cite{plimptonFast1995,thompsonLAMMPS2022,aktulgaParallel2012}. 
The simulation was initialized with 800 adamantane molecules, $\rm{C_{10}H_{16}}$, resulting in $N_{\rm C}=8000$ carbon and $N_{\rm H}=12800$ hydrogen atoms. 
The system was ramped to a temperature of 4000K and pressure of 40.5~GPa. 
This was followed by an NPT simulation for a total of 1.2 ns. 
Bonds were sampled using a length and duration criterion as discussed in previous works~\cite{ruthCyclic2025,dufour-decieuxPredicting2023}. 
The accuracy of the ReaxFF force field~\cite{ashrafExtension2017} at these thermodynamic conditions is questionable, but it provides sufficiently rich data to investigate our ring statistics. 

We started sampling after 0.6 ns to allow the system to reach equilibrium. 
Every 1.2 ps (500 samples total) we decompose the loop cluster into {\tt pi} and {\tt sli} classes as discussed in Section~\ref{sub:DICE_TICE_Comp}. 
From this decomposition we collect the following:
\begin{itemize}
	\item A random MCB.
	\item The length and number of rings within each {\tt sli} class.
	\item The length, number of rings, and number of independent polyhedra within each {\tt pi} class.
\end{itemize}

%
%

\subsection{Relevant cycles versus minimum cycle basis} 
\subsubsection{$\rm C_{10}H_{16}$ simulation data}
In Section~\ref{sub:rand_MCB}, we introduced Algorithm~\ref{alg:rand_MCB} for sampling MCBs uniformly at random. 
To highlight the value of this approach, we compare counts of large carbon rings from random MCBs and from the relevant cycles.
Specifically, we focus on the ring rate distribution, or the number of rings per carbon atom as a distribution over ring length -- see Figure~\ref{fig:TICE_Res}~(a).
This statistic has also been considered in computational studies of amorphous carbon~\cite{dernovMapping2025,deringerMachine2017}. 

Most rings are short with five or six carbon atoms. 
The amorphous structure of these hydrocarbon networks results in many large rings, as shown by the heavy tail of the ring rate distribution.
This tail approximately fits to a power law distribution $Ck^{-\alpha}$ with a cutoff.

Importantly, the power law constant $\alpha$ varies with the method of sampling loops.
When the loops are sampled from an MCB we obtain $\alpha\approx1.36$. 
In contrast, sampling loops from the relevant cycles results in a power of $\alpha\approx0.47$.

Without the cutoff, the power law tail $k^{-0.47}$ of the loop rate distribution for the relevant cycles is not summable.
We observe this cutoff increases with respect to both system size and the carbon-to-hydrogen ratio. 
Larger simulations are needed to see if this cutoff continues to increase with system size. 
If so, then the power law tail $k^{-0.47}$ will cause the relevant cycles to grow asymptotically faster than the dimension of the cycle space $\nu$:
\begin{equation}
	\frac{|\RelCyc|}{\nu}\to\infty \quad\text{as}\quad N_{\rm C}\to\infty.
\end{equation}
For our simulation, the time-averaged number of relevant cycles is $|\RelCyc|=1736$ and the average dimension of the cycle space is $\nu=1311$.

\subsubsection{Random geometric graphs}

\begin{figure}[t]
	\centering
	\includegraphics{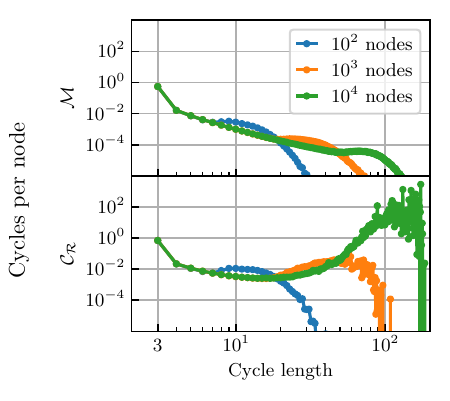}
	\caption{Analysis of the cycle rate distribution in random geometric graphs as a function of graph size. Cycles are sampled using an MCB $\calM$ (top) and the relevant cycles $\RelCyc$ (bottom). Nodes are assigned random positions in the unit cube $[0,1]^3$ and the cutoff radius for edges is chosen so that the expected degree is 3. The random geometric graph with $10^2$, $10^3$, and $10^4$ nodes is sampled $10^5$, $10^4$ and $10^3$ times respectively.}
	\label{fig:CycRateDist_RGG}
\end{figure}

To further investigate the scaling of $|\RelCyc|$ with system size, we consider random geometric graphs~\cite{penroseRandom2003}. A random geometric graph is a graph with $n$ nodes randomly positioned in the $d$-dimensional cube $[0,1]^d$. A pair of nodes are joined by an edge if their distance is less than a given fixed radius. The cycle statistics from these random graphs share some qualitative characteristics with our simulation because they have a spatial component. We choose a dimension of $d=3$ and periodic boundary conditions for sampling edges. 

We choose the radius so that the average degree is 3. For comparison, the average degree of the carbon skeleton in our ReaxFF simulation is 2.25. We choose a higher mean degree of 3 because it results in more large loops. This reduces the number of samples needed to observe enough large loops.

As in molecular dynamics simulation, the tail of the cycle rate distribution of MCBs in random geometric graphs roughly follows a power law. The cutoff for this power law distribution increases with system size. See Figure~\ref{fig:CycRateDist_RGG}, top. For the random geometric graph with $10^2$ nodes, the cycle rate distribution for the relevant cycles has the same shape as the cycle rate distribution given by MCBs. See the blue curve in Figure~\ref{fig:CycRateDist_RGG}, bottom. However, as system size increases, the number of large relevant cycles increases much faster than the number of large cycles in MCBs. For the random geometric graphs with $10^4$ nodes, we observed a shocking $9900$ relevant cycles per node. See the green curve in Figure~\ref{fig:CycRateDist_RGG}, bottom.

\subsubsection{A potential explosion of the relevant cycle count}

It is unclear if the ring rate distribution in molecular dynamics will follow this same trend with system size.
Given this ambiguity, we argue that the relevant cycles should not be used to count rings in large amorphous chemical systems. Our studies of random geometric graphs suggest the number of large rings may explode at scales achievable using multiple GPUs in a molecular dynamics simulation. Beyond the relevant cycles, many other unique sets of rings in the chemistry literature are also worst case exponential in size. See, e.g., the review~\cite{bergerCounterexamples2004}. This is not an issue for our approach, where the size of an MCB grows linearly with the number of edges.

\subsection{Sources of additional cycles} 
The {\tt pi} and {\tt sli} classes can be used to determine where the extra relevant cycles are coming from. For the given initial conditions, we find that the {\tt sli} classes are the most common. Over 99\% of {\tt pi} classes reduce to a single {\tt sli} class. Most {\tt sli} classes consist of a large ring with nested smaller rings -- see Figure~\ref{fig:TICE_Res}~(b.i.). In some cases, there is only a minor distinction between large and small rings, e.g., the two larger 7-rings in Figure~\ref{fig:TICE_Res}~(b.ii.) are separated by a smaller 6-ring. In other cases, the structure of a {\tt sli} class is more complex -- see Figure~\ref{fig:TICE_Res}~(b.iii-iv.).

In Figure~\ref{fig:TICE_Res}~(c), we show the joint distribution of the lengths and number of rings within a {\tt sli} class. 
From this distribution, we observe most (roughly 85\%) of the {\tt sli} classes consist of a single cycle, and the average size of the {\tt sli} classes increases with ring length. 
In Figure~\ref{fig:TICE_Res}~(d), we show the size distribution of the {\tt sli} classes, which roughly follows a power law distribution. 
Interestingly, the oscillations in this size distribution are not noise, and they can be seen as horizontal bands in the joint length size distribution in Figure~\ref{fig:TICE_Res}~(c). 
The number of cycles in a {\tt sli} class tends more often to a composite than a prime number.

In summary, the relevant cycles have an increased number of large cycles forming {\tt sli} classes. At the same time, these large cycles are highly dependent, e.g., the cycles in the {\tt sli} classes in Figure~\ref{fig:TICE_Res}~(b.i-iii.) share many edges. 
Our codes can be used to isolate and visualize these {\tt sli} classes.

\begin{figure*}[t]
	\centering
	\includegraphics[width=\linewidth]{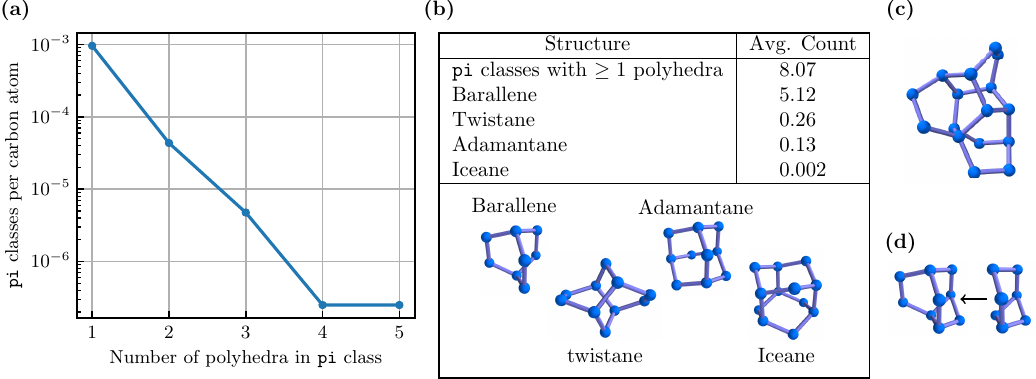}
	\caption{Analysis of polyhedra for a system initialized to adamantane with formula $\rm C_{10}H_{16}$. \textbf{(a)}~Size distribution of the {\tt pi} classes given by the number of polyhedra in the class. Only {\tt pi} classes with at least one polyhedron are considered, otherwise, they reduce to {\tt sli} classes. \textbf{(b)}~Average counts of common {\tt pi} classes per time step and their visualizations. \textbf{(c)}~The largest observed {\tt pi} class composed of five barellene polyhedra glued together by faces. \textbf{(d)}~A {\tt pi} class composed of a polyhedron that has a hexagonal face that is not uniquely defined because it is a {\tt sli} class.}
	\label{fig:DICE_Res}
\end{figure*}

\subsection{Polyhedra and diamond} 
Last, we investigate the {\tt pi} classes that do not reduce to {\tt sli} classes. As discussed in Section~\ref{sub:Poly}, these {\tt pi} classes can be described as polyhedra clusters. These are relatively rare, there are roughly eight of these {\tt pi} classes per time step. Since more theory is needed to describe these structures, we will consider individual polyhedra that are output by these codes in a case-by-case basis.
We label the {\tt pi} classes by the small hydrocarbons with the same carbon skeleton. Note, the {\tt pi} classes are not molecules but substructures within the loop cluster.

The majority of the {\tt pi} classes are composed of highly organized short rings. 
Roughly 75\% of these are composed of rings of length 6.
See Figure~\ref{fig:DICE_Res}~(b-d) for some examples of these {\tt pi} classes. 
Most of these {\tt pi} classes are small, consisting of a single polyhedron. See Figure~\ref{fig:DICE_Res}~(a). 
Sometimes, the faces of a polyhedron are not uniquely defined. For instance, the observed polyhedron in Figure~\ref{fig:DICE_Res}~(d) has two hexagonal cycles that can be chosen for its right face. However, polyhedra with non-unique faces are more rare.

The most obvious polyhedron to investigate is adamantane. 
Adamantane is the smallest diamondoid, a nanoscale structure with the same network topology as diamond, and it has been observed in petroleum~\cite{fangOrigin2013}.
Even though our simulation was initialized with adamantane molecules, adamantane was not common in our simulation. See Figure~\ref{fig:DICE_Res}~(b). 
 
Instead, the most commonly observed polyhedron is barallene\footnote{
	It is likely that many of these polyhedra more closely match the molecule bicyclo[2,2,2]octane with the same topology as barallene without double bonds. 
	However, we do not store bond order or infer double bonds. Hence, we use the simpler name barallene in both cases.
	}. 
Barallene can be used to form hexagonal diamond -- also called londsdaleite -- by gluing its faces to iceane. Iceane is relatively rare in our simulations, so it seems unlikely that this crystal structure will form. Instead, the majority of the larger {\tt pi} classes are composed of multiple barallene polyhedra glued together by faces. See twistane in Figure~\ref{fig:DICE_Res}~(b) and the larger structure in Figure~\ref{fig:DICE_Res}~(c).

\section{Discussion}
\label{sec:discussion}

\subsection{Sampling cycles (ring perception)} 
In Section~\ref{sub:rand_MCB}, we introduced a method for sampling MCBs uniformly at random, Algorithm~\ref{alg:rand_MCB}. 
This differs from other approaches because we do not sample a deterministic set of rings from our chemical data. 
Regardless, as a probabilistic structure, the random MCB is well-defined, and can be used to sample summary statistics from a chemical network. 
This approach results in fewer cycles, as discussed in Section~\ref{sec:results}. In particular, the size of the random MCB grows linearly with the number of bonds in the system.

A potential critique of using a random MCB is that it may not uncover all useful cycles.
Unfortunately, it is challenging to define such a set of cycles that is always small in size. 
The set of relevant cycles can be exponential in number, and it only includes cycles from MCBs. 
However, all cycles are within the span of a MCB by definition.
Thus, we argue it is better to interpret additional cycles as a consequence of the overlapping structure of a cycle cluster.

In Theorem~\ref{Thm:MCB_Intersect} in Section~\ref{sub:pair_intersect}, we fully characterized how a pair of cycles in a given MCB may intersect as a series of paths that appear with the same order and orientation in both cycles. This gives insight on what data structures are needed to properly store these pairwise intersections of cycles. Furthermore, in Algorithm~\ref{alg:merge_MCB} we introduced a method of modifying an MCB so that all pairs of cycles intersect over a single path. This permits a dual graph representation of the loop cluster, where nodes represent cycles in an MCB and edges represent their intersection paths.
These intersection paths are easy to store as a list of nodes.

Theorem~\ref{Thm:MCB_Intersect} is specialized to pairs of cycles in an MCB. Its proof largely relies on the fact that these cycles preserve distances. Because of this, Theorem~\ref{Thm:MCB_Intersect} can be easily adapted to the full set of relevant cycles. The largest set of cycles this proof technique applies to is the set of shortest path rings from Ref.~\cite{franzblau1991computation}.


%
%

\subsection{Polyhedra.} In Section~\ref{sub:Poly}, we related lack of uniqueness of MCBs to polyhedral structures. These generalized polyhedra were defined as sets of non-flat faces obtained via a cycle and its expansion with respect to an MCB
\begin{equation*}
	\calP=\{C\}\cup\expand{\calM}{C},\quad C\in\RelCyc\setminus\calM.
\end{equation*}
The {\tt pi} classes were interpreted as clusters of these polyhedra.

A potential application of these polyhedra is in crystallography. A crystal can be represented by a periodic tiling of these polyhedra glued together by faces, e.g., diamond is a tetrahedral tiling of adamantane. This representation has a benefit when compared to the periodic unit cell representation of a crystal; the {\tt pi} relation is defined for general graphs. There is no requirement that the chemical network or its substructures are large. Small {\tt pi} classes might be interpreted as nanocrystals. Also, the input graph does not need to be periodic. In an inhomogeneous crystal (e.g., a mixture of cubic and hexagonal diamond) it may be useful to analyze how these polyhedra vary within the material.

Before investigating these applications, a pair of limitations must be addressed. 
Algorithmically, the polyhedra are left-kernel vectors of the diagonal blocks of the matrix $\bm{R}$ from Section~\ref{sub:DICE_TICE_Comp}. These correspond to {\tt sli} classes that sum to shorter cycles. 
This notion of polyhedra is abstract.
In some cases, this may be ok, where the {\tt sli} classes can be interpreted as generalized cycles and visualized as the union of their cycles (e.g., Figure~\ref{fig:DICE_Res}~(d)). 
If not, then a cycle may be sampled from each {\tt sli} class to represent a polyhedron as a set of face cycles. Polyhedra sampled using this latter approach are not unique.

The second limitation is that these polyhedra are the basis of a vector space. As with cycles, we are only interested in small polyhedra. Ultimately, one would want to properly define a minimum polyhedron basis, and to find an algorithm to compute it. This task is likely complex. A greedy approach may be to bias Algorithm~\ref{alg:rand_MCB} to obtain a matrix $\bm{R}$ that outputs small polyhedra.

\section{Conclusion}
\label{sec:conclusion}

We reviewed and advanced theory and algorithms for identifying and grouping cycles in graphs. 
We proposed a new method for sampling MCBs uniformly at random. The {\tt pi} relation helps to extract generalized polyhedra, that may be start-ups for forming crystalline structures. The {\tt sli} classes may comprise large numbers of relevant cycles as those around the perimeter carbon nanotubes. We propose to include only one representative of each {\tt sli} class in an MCB.
We also provide theory for the format in which pairs of cycles in an MCB intersect, and we introduce a postprocessing step that makes these intersections single paths. 
This enables a dual graph representation for a cluster of cycles. We propose using this dual graph approach to model complex material networks. 
We illustrate this theory and algorithms by applying them to a reactive simulation of hydrocarbon pyrolysis MD simulation data initialized with 800 adamantane, $\rm C_{10}H_{16}$, molecules.



\section*{Acknowledgments}
This work was partially supported by ONR MURI Grant No. N000142412547 (M.C.), AFOSR MURI Grant No. FA9550-20-1-0397 (M.C.), and NSF GRFP Grant No. DGE 2236417 (P.R.).

\appendix

\section{Proofs for Section~\ref{sec:Theory}}
\subsection{Mutual relevance and useful lemmas}
\label{app:additional_lems}

To start,
Lemma~\ref{Lem:SmallCycs2MCB} below generalizes Statement 1 of Lemma~\ref{Lem:Relevant_MCB_Test} to sets of cycles. This lemma is useful for many of the results in this appendix.

\begin{lemma}
	Let $\calY$ be a set of cycles bounded in length by $k\in\mathbb{N}$, $|C|\le k$ for $C\in\calY$. Let $\calM$ be a given MCB. There exists a subset $\calY'\subseteq\calM$ also bounded in length by $k$, $|C'|\le k$ for $C'\in\calY'$, such that
	\begin{equation}
		\bigoplus_{C'\in\calY'}C' = \bigoplus_{C\in\calY}C.
		\label{eq:Y_replace_Lem}
	\end{equation}
	\label{Lem:SmallCycs2MCB}
\end{lemma}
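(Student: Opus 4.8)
The plan is to expand each cycle of $\calY$ \emph{individually} with respect to $\calM$, rather than expanding the aggregate sum. This distinction is the crux: the total sum $\bigoplus_{C\in\calY}C$ may have length far exceeding $k$ (for instance, the sum of two disjoint cycles of length $k$ has length $2k$), so applying the relevance criterion to the sum itself would only bound its expansion by the length of the sum, not by $k$.

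First I would invoke Statement 1 of Lemma~\ref{Lem:Relevant_MCB_Test} on each $C\in\calY$ separately: it guarantees that every cycle in $\expand{\calM}{C}$ has length at most $|C|\le k$. Writing $C=\bigoplus_{B\in\expand{\calM}{C}}B$ and summing over all of $\calY$ then yields
\begin{equation*}
	\bigoplus_{C\in\calY}C = \bigoplus_{C\in\calY}\ \bigoplus_{B\in\expand{\calM}{C}}B,
\end{equation*}
a double sum of cycles all drawn from $\calM$ and all of length at most $k$.

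Next I would collect terms over $GF(2)$. For each $B\in\calM$, let $n_B$ denote the number of cycles $C\in\calY$ whose expansion contains $B$, and set
\begin{equation*}
	\calY' = \{B\in\calM : n_B \text{ is odd}\}.
\end{equation*}
Because $B\oplus B=\emptyset$, every basis cycle appearing an even number of times cancels in the double sum, so $\bigoplus_{B\in\calY'}B=\bigoplus_{C\in\calY}C$, which is exactly Eq.~\eqref{eq:Y_replace_Lem}. By construction $\calY'\subseteq\calM$, and each $B\in\calY'$ lies in $\expand{\calM}{C}$ for at least one $C\in\calY$, so $|B|\le|C|\le k$; hence $\calY'$ respects the length bound.

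I do not expect any serious obstacle here: once the decision to expand termwise is made, the result follows from Lemma~\ref{Lem:Relevant_MCB_Test} plus elementary $GF(2)$ cancellation, with no estimates beyond the single inequality $|B|\le|C|\le k$. The only point requiring care is precisely the one flagged above—resisting the temptation to expand the aggregate sum, which would break the length control.
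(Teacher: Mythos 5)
Your proof is correct and is essentially the paper's own argument: the paper also defines $\calY'$ as the expansion of the sum (equivalently, your odd-multiplicity set), notes the inclusion $\calY'\subseteq\bigcup_{C\in\calY}\expand{\calM}{C}$, and derives the length bound from Statement 1 of Lemma~\ref{Lem:Relevant_MCB_Test} applied to each $C\in\calY$ individually. The point you flag as the crux --- that the bound must come from the individual expansions rather than from applying the relevance criterion to the aggregate sum --- is exactly the mechanism the paper uses.
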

\begin{proof}
	Let $\calY$ be a set of cycles bounded in length by $k$ and $\calM$ an MCB. By Definition~\ref{def:ex_op} of the expansion operator, the set of cycles
	\begin{equation}
		\calY' = \expand{\calM}{\bigoplus_{C\in\calY} C}
	\end{equation}
	is the unique subset $\calY'\subseteq\calM$ that satisfies Eq.~\eqref{eq:Y_replace_Lem}. We want to show the cycles $\calY'$ are also bounded in length by $k$. We observe, the cycles $\calY'$ satisfy the inclusion
	\begin{equation}
		\calY' \subseteq \bigcup_{{C\in\calY}} \expand{\calM}{C}.
	\end{equation}
	Thus, the cycles $\calY'$ are bounded in length by
	\begin{equation}
		|C'|\le|C|\le k, \quad C\in\calY,C'\in\expand{\calM}{C}
	\end{equation}
	where the first inequality is Statement 1 of Lemma~\ref{Lem:Relevant_MCB_Test}.
\end{proof}

Next, we introduce mutual relevance, which generalizes the notion of relevance to sets of cycles. We will use mutual relevance in our proofs regarding the {\tt pi} relation.

\begin{definition}
	A set of cycles $\mathcal{X}$ is \emph{mutually relevant} if there exists an MCB $\calM$ such that $\mathcal{X}\subseteq\calM$.
	\label{def:mut_rel}
\end{definition}

\noindent We emphasize linearly independent, relevant cycles are not necessarily mutually relevant. See, e.g., the cycles $C_L,C_L',C_R$ in Figure~\ref{fig:overlapping_diamonds} in Section~\ref{sub:background_partitions}.

Recall, vectors $v_1,\hdots,v_k$ from a vector space are linearly independent if the only linear combination that achieves
\begin{equation*}
	c_1v_1+\hdots +c_kv_k=0
\end{equation*}
is when $c_1=\hdots=c_k=0$. In Lemma~\ref{Lem:mut_rel_2} below, we define mutual relevance in a way that resembles this notion of linear independence.

\begin{lemma}
	A set of cycles $\calX\subseteq\RelCyc$ of equal length are not mutually relevant if and only if there exist cycles $C_1,C_2,\hdots,C_k\in\calX$ such that
	\begin{equation}
		C_1\oplus C_2 \oplus\hdots\oplus C_k = \smCyc
		\label{eq:mut_rel_ind}
	\end{equation}
	where $\smCyc$ is the sum of a set of smaller cycles $\calY$, $|C|<|C_1|$ for $C\in\calY$.
	\label{Lem:mut_rel_2}
\end{lemma}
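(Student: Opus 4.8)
The plan is to pass to the quotient of the cycle space by the subspace of shorter cycles, turning mutual relevance into ordinary linear independence. Write $\ell$ for the common length of the cycles in $\calX$, let $V_{<\ell}=\Span\{C\in\CycSp : |C|<\ell\}$, and let $\pi:\CycSp\to\CycSp/V_{<\ell}$ be the quotient map. The condition on the right-hand side of the lemma — the existence of distinct $C_1,\dots,C_k\in\calX$ with $C_1\oplus\cdots\oplus C_k$ equal to a sum of cycles shorter than $\ell$ — is precisely the statement that $\{\pi(C):C\in\calX\}$ is linearly dependent in $\CycSp/V_{<\ell}$ (repeated $C_i$ cancel, so we may take them distinct; the cube faces, whose sum is $\emptyset\in V_{<\ell}$, are the prototype). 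Thus it suffices to prove the equivalence: $\calX$ is mutually relevant if and only if $\{\pi(C):C\in\calX\}$ is linearly independent.

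For the easy direction (dependence $\Rightarrow$ not mutually relevant), suppose for contradiction that $\calX\subseteq\calM$ for some MCB $\calM$ and that distinct $C_1,\dots,C_k\in\calX$ satisfy $C_1\oplus\cdots\oplus C_k\in V_{<\ell}$. Since the $C_i$ are distinct members of the basis $\calM$, their sum is nonzero and $\expand{\calM}{C_1\oplus\cdots\oplus C_k}=\{C_1,\dots,C_k\}$, a set containing cycles of length $\ell$. On the other hand, writing $C_1\oplus\cdots\oplus C_k$ as a sum of cycles of length $<\ell$ and applying Lemma~\ref{Lem:SmallCycs2MCB}, its expansion in $\calM$ consists only of cycles of length $<\ell$. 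By uniqueness of the expansion these two descriptions must agree, which is impossible. Hence mutual relevance forces independence.

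The substantive direction is its contrapositive: if $\{\pi(C):C\in\calX\}$ is independent then $\calX$ is mutually relevant. I would build the witnessing MCB by a greedy sequence of basis exchanges. Fix any MCB $\calM_0$, enumerate $\calX=\{X_1,\dots,X_t\}$, and maintain inductively an MCB $\calM_i$ with $\{X_1,\dots,X_i\}\subseteq\calM_i$. To insert $X_{i+1}$, do nothing if it already lies in $\calM_i$; otherwise examine $\expand{\calM_i}{X_{i+1}}$. Because $X_{i+1}\in\RelCyc$, Lemma~\ref{Lem:Relevant_MCB_Test} guarantees this expansion contains at least one cycle of length $\ell$. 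These length-$\ell$ members cannot all lie in $\{X_1,\dots,X_i\}$: if they did, summing the expansion would give $\pi(X_{i+1})=\sum_{j\in S}\pi(X_j)$ for some nonempty $S\subseteq\{1,\dots,i\}$, a nontrivial dependence among $\{\pi(C):C\in\calX\}$. Therefore some length-$\ell$ cycle $C'\in\expand{\calM_i}{X_{i+1}}$ is fresh, $C'\notin\{X_1,\dots,X_i\}$, and by Lemma~\ref{Lem:Relevant_MCB_Test} the swap $\calM_{i+1}=(\calM_i\setminus\{C'\})\cup\{X_{i+1}\}$ is again an MCB that still contains $X_1,\dots,X_{i+1}$. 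After $t$ steps $\calM_t\supseteq\calX$, so $\calX$ is mutually relevant.

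I expect the third paragraph to be the crux: the whole argument rests on showing that the independence hypothesis supplies a previously-untouched length-$\ell$ cycle to remove at every insertion step, so the induction never stalls and never evicts an already-placed member of $\calX$. The quotient reformulation is what makes this transparent, reducing the combinatorial bookkeeping to a single application of linear independence per step; once that reformulation is in place, the remainder is routine given Lemmas~\ref{Lem:Relevant_MCB_Test} and~\ref{Lem:SmallCycs2MCB}.
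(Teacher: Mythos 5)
Your proof is correct, and it rests on the same two workhorses as the paper's own argument --- Lemma~\ref{Lem:Relevant_MCB_Test} (swapping a relevant cycle for an equal-length cycle in its expansion) and Lemma~\ref{Lem:SmallCycs2MCB} (pushing a sum of short cycles into a given MCB) --- but it packages them differently. The paper proves the forward direction by extracting a maximal mutually relevant subset $\calY\subset\calX$ together with the first cycle $C$ that cannot be adjoined, and reads the dependence off the expansion of $C$ at that stalling point; your contrapositive runs the same process forwards as a greedy insertion and uses linear independence in the quotient of $\CycSp$ by the span of the shorter cycles to certify that the insertion never stalls and never evicts an already-placed member of $\calX$. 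That quotient reformulation is the genuine added value: it turns ``mutually relevant'' into ``linearly independent modulo shorter cycles'' in one stroke, which makes the non-stalling claim a one-line application of independence and also makes your easy direction cleaner than the paper's --- you get an immediate contradiction from uniqueness of the expansion $\expand{\calM}{C_1\oplus\cdots\oplus C_k}$, whereas the paper argues separately that $C_1$ lies in the span of an MCB containing $C_2,\hdots,C_k$ and hence cannot belong to it. The paper's stalling-point formulation is what gets reused almost verbatim later (e.g.\ in the transitivity proof of Theorem~\ref{Thm:DirInt_Eq_Rel}), so it is not wasted effort there, but your version is the more transparent standalone proof and makes explicit the matroid-like exchange structure that the paper only gestures at.
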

\begin{proof}
	($\Rightarrow$) Let $\calX\subseteq\RelCyc$ not be mutually relevant. The individual cycles in $\calX$ are relevant. Hence, there exists a mutually relevant subset $\calY\subset\calX$ and a cycle $C\in\calX\setminus\calY$ such that $\calY\cup\{C\}$ is not mutually relevant. By the definition of mutual relevance, let $\calM$ be an MCB where $\calY\subseteq\calM$.
	
	We want to construct a subset of $\calX$ that sums to smaller cycles. First, we expand $C$ with respect to $\calM$,
	\begin{equation}
		C = \enspace\bigoplus_{\mathclap{C'\in\expand{\calM}{C}}} C' = \enspace\bigoplus_{\mathclap{\substack{C'\in\expand{\calM}{C}\\|C'|=|C|}}} C' \enspace\oplus\enspace \bigoplus_{\mathclap{\substack{C''\in\expand{\calM}{C}\\|C''|<|C|}}}C''.
	\end{equation}
	We move the cycles of length $|C|=|C'|$ to the left-hand side, so that the right-hand side contains only smaller cycles:
	\begin{equation}
		C\oplus\enspace\bigoplus_{\mathclap{\substack{C'\in\expand{\calM}{C}\\|C'|=|C|}}}C'\enspace =  \enspace\bigoplus_{\mathclap{\substack{C''\in\expand{\calM}{C}\\|C''|<|C|}}}C''. 
		\label{eq:C_exp_toSmCyc}
	\end{equation}
	We argue the cycles in the left-hand side of Eq.~\eqref{eq:C_exp_toSmCyc} belong to $\calY\cup\{C\}\subseteq\calX$.
	
	Assume towards a contradiction that there exists $C'\in\expand{\calM}{C}$ of length $|C'|=|C|$ such that $C'\notin\calY$. 
	By Statement 2 of Lemma~\ref{Lem:Relevant_MCB_Test}, we can exchange $C$ for $C'$ to obtain a new minimum cycle basis $\calM'=(\calM\setminus\{C'\})\cup\{C\}$.
	Indeed, this contradicts our initial assumptions, as it implies $\calY\cup\{C\}\subseteq\calM'$ is mutually relevant.
	Thus, the cycles in the left-hand side of Eq.~\eqref{eq:C_exp_toSmCyc} form a subset of $\calX$ that sums to smaller cycles as desired.
	
	($\Leftarrow$) Let $C_1,C_2,\hdots,C_k$ satisfy Eq.~\eqref{eq:mut_rel_ind}. We want to show that these cycles are not mutually relevant. We assume $C_2,\hdots,C_k$ are mutually relevant, otherwise our proof is complete. Let $\calM$ be an MCB such that $C_2,\hdots,C_k\in\calM$. By Lemma~\ref{Lem:SmallCycs2MCB} we can choose smaller cycles $\calY'\subseteq\calM$ satisfying Eq.~\eqref{eq:mut_rel_ind}. Re-arranging Eq.~\eqref{eq:mut_rel_ind} and substituting in $\calY'$ gives
	\begin{equation}
		C_1 = C_2\oplus\hdots\oplus C_k\oplus\bigoplus_{C'\in\calY'}C'.
		\label{eq:C_exp_all_mut_rel}
	\end{equation}
	Hence, $C_1$ is in the span of cycles in $\calM$, and $C_1$ does not belong to $\calM$. 
	Here, $\calM$ is an arbitrary MCB containing $C_2,\hdots,C_k$, so there is no MCB containing $C_1,C_2,\hdots,C_k$. In other terms, $C_1,C_2,\hdots,C_k$ are not mutually relevant.
\end{proof}

\subsection{Polyhedron-interchangeability}
\label{app:DICE_Proofs}


Here we prove the results for the {\tt pi} relation from Section~\ref{sub:DICE_Props}. These proofs rely on the lemmas and definitions from Appendix~\ref{app:additional_lems}.

First, we prove an equivalent definition of the {\tt pi} relation. Instead of proving Lemma~\ref{Lem:dir_intchg_2}, we prove Lemma~\ref{Lem:dir_intchg_3} below. Lemma~\ref{Lem:dir_intchg_3} has weaker requirements than Lemma~\ref{Lem:dir_intchg_2}. In particular, the shorter cycles used to exchange $C_1$ for $C_2$ no longer need to belong to an MCB. It is straightforward to prove Lemma~\ref{Lem:dir_intchg_2} from Lemma~\ref{Lem:dir_intchg_3}. Lemma~\ref{Lem:dir_intchg_3} is often easier to verify in proofs.

\begin{lemma}
	Let $C_1,C_2\,{\in}\,\RelCyc$ where $|C_1|\,{=}\,|C_2|$. $C_1{\pInt}\, C_2$ if and only if there exists equal length cycles $\calX$, $|C|\,{=}\,|C_1|$ for $C\,{\in}\,\calX$ where $C_1,C_2\,{\notin}\,\calX$, and shorter cycles $\calY$, $|C|{<}|C_1|$ for $C{\in}\calY$, such that
	\begin{enumerate}
		\item (mutual relevance) there exists an MCB $\calM$ where $\calX\cup\{C_2\}\subseteq\calM$,
		\item (expansion) $C_1$ is given by the sum 
		\begin{equation}
			C_1 = C_2\oplus \bigoplus_{C'\in\calX} C' \oplus \bigoplus_{C''\in\calY} C''.
			\label{eq:dir_exchg_split}
		\end{equation}
	\end{enumerate}
	The set of equal length cycles in the expansion of $C_1$ in $\calM$ is $\calX$.
	\label{Lem:dir_intchg_3}
\end{lemma}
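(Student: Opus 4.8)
The plan is to prove Lemma~\ref{Lem:dir_intchg_3} directly from Definition~\ref{def:dir_intchg} of the {\tt pi} relation, exploiting the fact that the $\calM$-expansion of a relevant cycle stratifies cleanly by length. For the forward direction, suppose $C_1\pInt C_2$ and fix a witnessing MCB $\calM_2\ni C_2$ for which $(\calM_2\setminus\{C_2\})\cup\{C_1\}$ is an MCB. I would take $\calM:=\calM_2$ and expand $C_1$ in $\calM$. The first thing to check is $C_2\in\expand{\calM}{C_1}$: otherwise $C_1$ would lie in $\Span(\calM\setminus\{C_2\})=\Span(\calM_2\setminus\{C_2\})$, contradicting the linear independence of the basis $(\calM_2\setminus\{C_2\})\cup\{C_1\}$. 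By Statement~1 of Lemma~\ref{Lem:Relevant_MCB_Test} every cycle of $\expand{\calM}{C_1}$ has length at most $|C_1|$, so the expansion partitions by length into $C_2$, the remaining equal-length cycles $\calX:=\{B\in\expand{\calM}{C_1}:|B|=|C_1|\}\setminus\{C_2\}$, and the strictly shorter cycles $\calY$. Reading off $\expand{\calM}{C_1}=\{C_2\}\cup\calX\cup\calY$ gives condition~2 verbatim, condition~1 is immediate since $\calX\cup\{C_2\}\subseteq\calM$, and the equal-length cycles of $\expand{\calM}{C_1}$ are exactly $\{C_2\}\cup\calX$, so $\calX$ is recovered as the final sentence of the lemma asserts. (The reflexive case $C_1=C_2$ is handled separately by taking $\calX=\calY=\emptyset$, and in the non-reflexive case $C_1\notin\calM$ forces $C_1\notin\calX$.)

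For the reverse direction, I would start from the given data $\calX,\calY$ and MCB $\calM\supseteq\calX\cup\{C_2\}$ and again use $\calM$ as the witness. The crux is to show $C_2\in\expand{\calM}{C_1}$. Expanding the right-hand side of condition~2 in $\calM$, the cycles $C_2$ and each $C'\in\calX$ already belong to $\calM$ and contribute themselves, whereas $\bigoplus_{C''\in\calY}C''$ expands, by Lemma~\ref{Lem:SmallCycs2MCB}, into a set of cycles of length strictly less than $|C_1|$. Since no cycle of length below $|C_1|$ can coincide with the length-$|C_1|$ cycle $C_2$, there is no cancellation at the top length, so $C_2$ survives in $\expand{\calM}{C_1}$ (and the equal-length part of the expansion is exactly $\{C_2\}\cup\calX$). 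Because $C_1$ is relevant and $C_2\in\expand{\calM}{C_1}$ with $|C_2|=|C_1|$, Statement~2 of Lemma~\ref{Lem:Relevant_MCB_Test} yields that $(\calM\setminus\{C_2\})\cup\{C_1\}$ is an MCB, which is exactly $C_1\pInt C_2$ by Definition~\ref{def:dir_intchg}.

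The step I expect to be the main obstacle is this no-cancellation argument in the reverse direction: one must be sure that expanding the strictly shorter correction term $\bigoplus_{C''\in\calY}C''$ into the basis $\calM$ cannot interfere with the top-length cycles $\{C_2\}\cup\calX$. This is precisely where the length stratification and the strict inequality $|C''|<|C_1|$ do the work, and it is what allows Lemma~\ref{Lem:dir_intchg_3} to weaken the hypotheses of Lemma~\ref{Lem:dir_intchg_2}: only the equal-length cycles $\calX\cup\{C_2\}$, rather than all of $\calX\cup\calY$, need be mutually relevant. With Lemma~\ref{Lem:dir_intchg_3} established, Lemma~\ref{Lem:dir_intchg_2} would follow quickly, since the forward construction already places $\calX\cup\calY\cup\{C_2\}$ inside the single MCB $\calM$, so the combined set $\calX\cup\calY$ serves as the exchanging family required there.
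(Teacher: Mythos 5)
Your proof is correct and follows essentially the same route as the paper's: the forward direction extracts $\calX$ and $\calY$ from the length stratification of $\expand{\calM_2}{C_1}$ after showing $C_2\in\expand{\calM_2}{C_1}$ by linear independence, and the reverse direction uses Lemma~\ref{Lem:SmallCycs2MCB} to replace $\calY$ by shorter cycles inside $\calM$ and then applies Statement~2 of Lemma~\ref{Lem:Relevant_MCB_Test}. Your explicit no-cancellation remark at the top length is a point the paper leaves implicit, but it is the same argument.
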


\begin{proof}
	($\Leftarrow$) Let $\calX$ and $\calY$ satisfy the given hypothesis. We want to show $C_1\pInt C_2$. By mutual relevance, let $\calM_2$ be an MCB such that $\calX\cup\{C_2\}\subseteq\calM$. By Lemma~\ref{Lem:SmallCycs2MCB}, there exists shorter cycles $\calY'\subseteq\calM_2$ with the same sum as $\calY$. We replace $\calY'$ for $\calY$ in Eq.~\eqref{eq:dir_exchg_split}
	\begin{equation}
		C_1 = C_2\oplus \bigoplus_{C'\in\calX} C' \oplus \bigoplus_{C''\in\calY'} C'',
		\label{eq:dir_exchg_split2}
	\end{equation}
	The right-hand side of Eq.~\eqref{eq:dir_exchg_split2} is the expansion of $C_1$ in $\calM_2$. By Lemma~\ref{Lem:Relevant_MCB_Test}, Statement 2, we may exchange to a new MCB $\calM_1=(\calM_2\setminus\{C_2\})\cup\{C_1\}$. Thus, $C_1\pInt C_2$.
	
	($\Rightarrow$) Let $C_1{\pInt}\, C_2$ via the MCB $\calM_2$. We want to construct sets $\calX$ and $\calY$ that satisfy the given hypothesis. First, we show that $C_2$ belongs to the expansion of $C_1$ in $\calM_2$, or $C_2\in\expandd{\calM_2}{C_1}$. To do so, consider the MCB $\calM_1=(\calM_2\setminus\{C_2\})\cup\{C_1\}$ given by the {\tt pi} relation. If $C_2\notin\expandd{\calM_2}{C_1}$, then the full expansion $\expandd{\calM_2}{C_1}$ belongs to $\calM_1$. This is not possible, since the set $\{C_1\}\cup\expandd{\calM_2}{C_1}$ is linearly dependent,
	\begin{equation*}
		C_1 = \bigoplus_{C'\in\expandd{\calM_2}{C_1}}C',
	\end{equation*}
	so they cannot all belong to $\calM_1$. 
	Since $C_2$ is in the expansion of $C_1$ in $\calM_2$, we may decompose the expansion as follows
	\begin{equation}
		C_1 = C_2\oplus\enspace
		\bigoplus_{\mathclap{\substack{C'\in\expand{\calM_2}{C_1}\setminus\{C_2\}\\|C'|=|C_1|}}}C'
		\quad\oplus\quad
		\bigoplus_{\mathclap{\substack{C''\in\expand{\calM_2}{C_1}\\|C''|<|C_1|}}}C'.
	\end{equation}
	Thus, the cycles $\calX$ are the cycles of length $|C_1|$ in $\expand{\calM_2}{C_1}$ other than $C_2$, and the cycles $\calY$ are the shorter cycles in $\expand{\calM_2}{C_1}$.
\end{proof}

\begin{figure*}
	\includegraphics{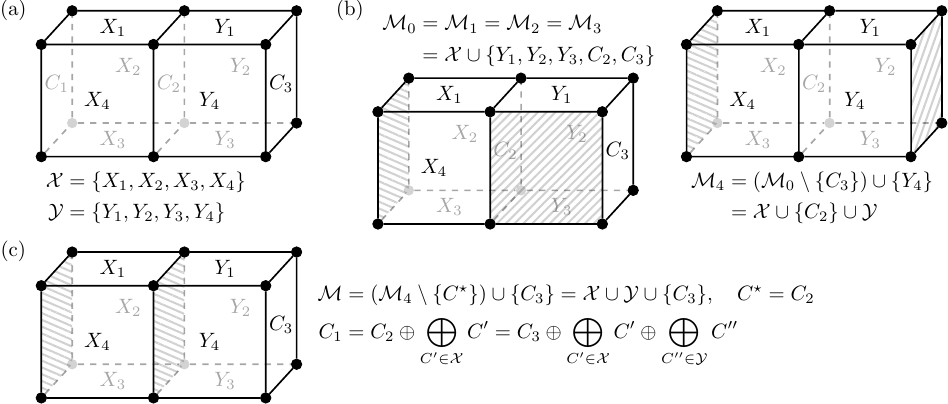}
	\caption{Construction of transitivity for the {\tt pi} relation. \textbf{(a)} A graph with relevant cycles $\RelCyc=\calX\cup\calY\cup\{C_1,C_2,C_3\}$, where $C_1\pInt C_2$ and $C_2\pInt C_3$ via $\calX$ and $\calY$ respectively. \textbf{(b)}~MCBs $\calM_j$ such that the mutually relevant sets $\calZ_j=\calX\cup\{C_2\}\cup\{Y_1,\hdots,Y_j\}$ satisfy $\calZ_j\subseteq\calM_j$. The relevant cycles not in $\calM_j$ are shaded with gray stripes. \textbf{(c)} A final MCB $\calM$ obtained by exchanging $C_3$ for $C_2$ in $\calM_4$. $C_3$ is in the expansion of $C_1$ in $\calM$ so $C_1\pInt C_3$.}
	\label{fig:DICE_trans_ex}
\end{figure*}

Next, we prove that {\tt pi} is an equivalence relation, Theorem~\ref{Thm:DirInt_Eq_Rel}. 
First, consider the following property about the expansion operator. Let $\calB$ be a cycle basis. If we sum $C_1$ and $C_2$ and expand in $\calB$ then we obtain
\begin{equation}
	C_1\oplus C_2 = \bigoplus_{\mathclap{C'\in\expand{\calB}{C_1}}}C' 
	\;\oplus\; 
	\bigoplus_{\mathclap{C''\in\expand{\calB}{C_2}}} C'' 
	\quad=\quad
	\bigoplus_{\mathclap{C'\in\expand{\calB}{C_1}\symdiff\expand{\calB}{C_2}}} C'
\end{equation}
where the reduced sum in the final equality is over the symmetric difference $\expand{\calB}{C_1}\symdiff\expand{\calB}{C_2}$ because of cancellations of the form $C\oplus C=\emptyset$. Thus, the expansion operator is linear
\begin{equation}
	\expand{\calB}{C_1\oplus C_2} = \expand{\calB}{C_1}\symdiff\expand{\calB}{C_2}
	\label{eq:expand_linear}
\end{equation}
where addition is given by the symmetric difference operator. We will use linearity of the expansion operator in the following proofs.



\begin{proof}[Proof of Theorem~\ref{Thm:DirInt_Eq_Rel}]
	We show that the {\tt pi} relation satisfies three properties. The reflexive property, $C\,{\pInt}\, C$, is immediate from an MCB $\calM,C\in\calM$, since $\calM=(\calM\setminus\{C\})\cup\{C\}$ is already an MCB.

	(Symmetry) Here we use the definition of {\tt pi} as in Definition~\ref{def:dir_intchg}. Let $C_1\pInt C_2$ via $\calM_2$, $C_2\in\calM_2$, an MCB such that the exchange $\calM_1=(\calM_2\setminus\{C_2\})\cup\{C_1\}$ is an MCB. Then, the reverse exchange $\calM_2=(\calM_1\setminus\{C_1\})\cup\{C_2\}$ also results in a MCB. Hence, $C_2\pInt C_1$.

	(Transitivity) Transitivity is trickier to verify for the {\tt pi} relation. As an intuition leading up to this result, recall from Section~\ref{sub:Poly} that {\tt pi} relations can be interpreted as polyhedra. For instance, consider the graph depicted in Figure~\ref{fig:DICE_trans_ex} (a). The cycles $C_1$ and $C_2$ satisfy $C_1{\pInt}\, C_2$ since they belong to the same cube $\calP_1=\{C_1,C_2\}\cup\calX$. Similarly, $C_2\,{\pInt}\, C_3$ since they belong to the cube $\calP_2=\{C_2,C_3\}\cup\calY$. The transitive relation can be verified using the rectangular prism $\calP_3=\calP_1\symdiff\calP_2 = \{C_1,C_3\}\cup\calX\cup\calY$. Unfortunately, the symmetric difference $\calP_3$ does not define a {\tt pi} relation in general, as the cycles $\calP_3\setminus\{C_1\}$ may not be mutually relevant. Instead, we reproduce this procedure using mutually relevant sets below. We illustrate these methods in Figure~\ref{fig:DICE_trans_ex} as well.
	
	To verify transitivity, we use the definition of the {\tt pi} relation given by Lemma~\ref{Lem:dir_intchg_3}. 
	Let $C_1{\pInt}\, C_2$ and $C_2\,{\pInt}\, C_3$ via $\calX$ and $\calY$ up to smaller cycles respectively:
	\begin{align}
		C_1 &= C_2 \oplus \bigoplus_{C\in\calX} C \oplus\text{(smaller cycles)}, 
		\label{Eq:C1C2DirEx}
		\\
		C_3 &= C_2 \oplus \bigoplus_{C\in\calY} C\oplus\text{(smaller cycles)},
		\label{Eq:C2C3DirEx}
	\end{align}
	where (i) $|C|=|C_1|$ for $C\in\calX\cup\calY$, (ii) $\calX\cup\{C_2\}$ and $\calY\cup\{C_2\}$ are mutually relevant, and  (iii) $C_1,C_2\notin\calX$ and $C_2,C_3\notin\calY$.
	Note, the set $\calX\cup\{C_1\}$ is mutually relevant, as it belongs to the MCB obtained by the exchange $\calM_1=(\calM_2\setminus\{C_2\})\cup\{C_1\}$, where $\calM_2$ is an MCB such that $\calX\cup\{C_2\}\subseteq\calM_2$.
	$\calY\cup\{C_3\}$ is mutually relevant by a similar argument.

	
	
	Motivated by the polyhedron $\calP_3$ combining $\calP_1$ and $\calP_2$ in our example, we iteratively merge the mutually relevant sets $\calX\cup\{C_2\}$ and $\calY$. 
	First, initialize $\calZ_0=\calX\cup\{C_2\}$ and arbitrarily order the cycles $\calY=\{Y_1,\hdots,Y_k\}$. Then, for $j=1,\hdots,k$ we do the following
	\begin{enumerate}
		\item if there exists $\tilde{Y}_j\in\Span (\calZ_{j-1})$ such that $Y_j=\tilde{Y}_j\oplus\text{(smaller cycles)}$ then $\calZ_{j}=\calZ_{j-1}$,
		\item otherwise, set $\calZ_{j}=\calZ_{j-1}\cup\{Y_{j}\}$.
	\end{enumerate}
	Recall from Lemma~\ref{Lem:mut_rel_2}, a set of cycles is mutually relevant if it does not have a subset that sums to smaller cycles. Thus, at each step the cycles $\calZ_j$ are mutually relevant. Furthermore, all cycles $C\in\calX\cup\calY\cup\{C_2\}$ are within the span of $\calZ_k\subseteq\calX\cup\calY\cup\{C_2\}$ up to smaller cycles.
	
	By definition of mutual relevance, let $\calM_k$ be an MCB containing $\calZ_k$. Consider the expansion of $C_3$ with respect to $\calM_k$. 
	By Eq.~\eqref{Eq:C2C3DirEx}, $C_3$ is in the span of $\calZ_k$ up to smaller cycles.
	By Lemma~\ref{Lem:mut_rel_2}, the mutually relevant cycles $\calY\cup\{C_3\}$ cannot be combined to sum to smaller cycles. Thus, there exists an additional cycle in the expansion $C^\star\in\expand{\calM_k}{C_3}$, $C^\star\in\calZ_k\setminus\calY$, of length $|C^\star|=|C_3|$. By Statement 2 of Lemma~\ref{Lem:Relevant_MCB_Test} we can exchange $C_3$ for $C^\star$ to obtain a new MCB $\calM=(\calM_k\setminus\{C^\star\})\cup\{C_3\}$.
	
	Finally, we want to show that $C_1\pInt C_3$ in $\calM$ by showing $C_3\in\expand{\calM}{C_1}, |C_1|=|C_3|$. By linearity of the expansion operator
	\begin{equation}
		\expand{\calM}{C_1} = \expand{\calM}{C_1\oplus C_3}\symdiff \{C_3\}.
		\label{eq:C1C3_Ex_M}
	\end{equation}
	where $C_1=(C_1\oplus C_3)\oplus C_3$ and $\expand{\calM}{C_3}=\{C_3\}$. Next, consider the identity
	\begin{equation}
		\expand{\calM_k}{C_1\oplus C_3} = \expand{\calM_k}{C_1}\symdiff \expand{\calM_k}{C_3}.
	\label{eq:C1C3_Ex_Mk}
	\end{equation}
	By construction, $C^\star$ belongs to both terms in the right-hand side of Eq.~\eqref{eq:C1C3_Ex_Mk}, where the cycles of length $|C_1|$ in $\expand{\calM_k}{C_1}$ are $\calX\cup\{C_2\}$ by Eq.~\eqref{Eq:C1C2DirEx}. Hence, $C^\star$ cancels in the symmetric difference in Eq.~\eqref{eq:C1C3_Ex_Mk} and
	\begin{equation}
		\expand{\calM_k}{C_1\oplus C_3} \subseteq \calM_k\setminus\{C^\star\} \subset \calM.
	\end{equation}
	Thus, $\expand{\calM_k}{C_1\oplus C_3}$ is the expansion of $C_1\oplus C_3$ in $\calM$ as well. We substitute $\expand{\calM_k}{C_1\oplus C_3}$ for $\expand{\calM}{C_1\oplus C_3}$ in Eq.~\eqref{eq:C1C3_Ex_M}:
	\begin{equation}
		\expand{\calM}{C_1} = \expand{\calM_k}{C_1\oplus C_3}\cup\{C_3\}
	\end{equation}
	where $C_3\notin\calM_k$.
	To conclude $C_3\in\expand{\calM}{C_1}$ so $C_1\pInt C_3$ in $\calM$ as desired.
\end{proof}

Next, we prove the {\tt pi} classes can be computed using a single MCB.

\begin{proof}[Proof of Theorem~\ref{Thm:DirInt_Comp}]
	Let $\calM$ be a given MCB. 
	We have already shown that if $C_1\in\RelCyc$ and $C_2\in\expand{\calM}{C_1}$, $|C_1|=|C_2|$, then $C_1\pInt C_2$. 
	See Lemma~\ref{Lem:Relevant_MCB_Test}, Statement 2. We want to show the {\tt pi} classes are obtained from the transitive closure of these relations.
	
	Let $C_1\pInt C_2$. 
	By Lemma~\ref{Lem:dir_intchg_3}, there exists a set of cycles $\calX$, $C_1,C_2\notin\calX$, such that
	\begin{equation}
		C_1 = C_2 \oplus\bigoplus_{C\in\calX}C\oplus\smCyc
		\label{eq:C1C2_4_DirExAlg}
	\end{equation}
	where $|C|=|C_1|$ for $C\in\calX$ and $\calX\cup\{C_2\}$ is mutually relevant. 
	In the following identity, we expand Eq.~\eqref{eq:C1C2_4_DirExAlg} in $\calM$ and use linearity of the expansion operator
	\begin{equation}
		\begin{split}
		\expand{\calM}{C_1}\symdiff\expand{\calM}{C_2}\symdiff\left(\symdiffseries_{C\in\calX}\expand{\calM}{C}\right) \\= \smCyc
		\end{split}
		\label{eq:DICE_alg_Exs}
	\end{equation}
	In Eq.~\eqref{eq:DICE_alg_Exs}, $\smCyc$ represents a set of smaller cycles $\calY$, $|C'|<|C_1|$ for $C'\in\calY$, which 
	belonging to $\calM$ by Lemma~\ref{Lem:SmallCycs2MCB}.
	We utilize Eq.~\eqref{eq:DICE_alg_Exs} to obtain {\tt pi} relations from $\calM$ which we can stitch together to show that $C_1\pInt C_2$.
	
	Consider the following procedure
	\begin{enumerate}
		\item Initialize $\calX_0=\emptyset$ and index $j=1$.
		\item Let $B_j\in\calM$ be a cycle of length $|B_j|{=}|C_1|$ in the expansion of $\calX_{j-1}\cup\{C_2\}$ in $\calM$,
		\begin{equation*}
			B_j\in
			\expand{\calM}{C_2}\symdiff\bigg(\symdiffseries_{{C\in\calX_{j-1}}}\expand{\calM}{C}\bigg).
		\end{equation*}
		\item Let $X_j\in (\calX\cup\{C_1\})\setminus\calX_{j-1}$ such that $B_j\in\expand{\calM}{X_j}$ and set $\calX_j = \calX_{j-1}\cup\{X_j\}$.
		\item If $X_j=C_1$ then exit, otherwise repeat steps 2-4 with $j\gets j+1$.
	\end{enumerate}
	The feasibility of this procedure is as follows.
	In step 2, the cycles $\calX_{j-1}\cup\{C_2\}\subseteq\calX\cup\{C_2\}$ are mutually relevant, so they do not sum to shorter cycles by Lemma~\ref{Lem:mut_rel_2}. Thus, there exists an equal length cycle $B_j$, $|B_j|=|C_1|$, in the expansion of $\calX_{j-1}\cup\{C_2\}$ with respect to $\calM$.
	For step 3, we observe that $B_j$ is not contained in the right-hand side of Eq.~\eqref{eq:DICE_alg_Exs} because it is not a smaller cycle. 
	Therefore, there exists an additional cycle $X_j$ such that $B_j\in\expand{\calM}{X_j}$ which cancels with the $B_j$ in the expansion of $\calX_{j-1}\cup\{C_2\}$ in the left-hand side of Eq.~\eqref{eq:DICE_alg_Exs}. 
	
	We show by an inductive argument the cycles $\calX_j\cup\{C_2\}$ are in the same {\tt pi} class via the transitive closure of the {\tt pi} relations given by $\calM$.
	Initially, $\calX_0\cup\{C_2\}$ is a single cycle, so there is nothing to prove.
	For the inductive step, let $\calX_{j-1}\cup\{C_2\}$ belong to the same {\tt pi} class by 
	the transitive closure of {\tt pi} relations from $\calM$. By construction, $B_j$ is an equal length cycle in the expansion of both $\calX_{j-1}\cup\{C_2\}$ and $X_j$ in $\calM$. Thus, $B_j$ is {\tt pi}-related to a cycle in $\calX_{j-1}\cup\{C_2\}$ and to $X_j$ via $\calM$. Therefore, the cycles $\calX_j\cup\{C_2\}=\calX_{j-1}\cup\{C_2\}\cup\{X_j\}$ are in the same {\tt pi} class by 
	the transitive closure of {\tt pi} relations from $\calM$. 
	
	The stopping condition of this procedure is that $C_1\in\calX_j\cup\{C_2\}$. Hence, $C_1{\pInt} C_2$ by the transitive closure of the {\tt pi} relations from $\calM$. The cycles $C_1,C_2$ are arbitrary, so $\calM$ can be used to identify all {\tt pi} relations.
\end{proof}

Last, we prove we can always exchange cycles between distinct MCBs.

\begin{proof}[Proof of Lemma~\ref{Lem:DICE_Irreducible}]
	Let $\calM_1,\calM_2$ be distinct MCBs and $C_2\in\calM_2\setminus\calM_1$. We want to construct a cycle $C_1\in\calM_1\setminus\calM_2$ such that $(\calM_1\setminus\{C_1\})\cup\{C_2\}$ is an MCB.
	
	We expand $C_2$ in $\calM_1$ as follows
	\begin{equation}
		C_2 = \bigoplus_{{\substack{C'\in\expand{\calM_1}{C_2}\\|C'|=|C_2|}}}C'
		\oplus
		\bigoplus_{{\substack{C''\in\expand{\calM_1}{C_2}\\|C''|<|C_2|}}}C''.
	\end{equation} 
	Next, we move the equal length cycles to the left-hand side.
	\begin{equation}
		C_2 \oplus \bigoplus_{\substack{C'\in\expand{\calM_1}{C_2}\\|C'|=|C_2|}}C'
		=
		\bigoplus_{\substack{C''\in\expand{\calM_1}{C_2}\\|C''|<|C_2|}}C''.
	\end{equation}
	Importantly, the cycles of length $|C_2|$ in $\{C_2\}\cup\expand{\calM_1}{C_2}$ sum to shorter cycles, so they are not mutually relevant by Lemma~\ref{Lem:mut_rel_2}.
	
	By definition, the cycles of length $|C_2|$ in $\calM_2$ are mutually relevant. A subset of a mutually relevant set is mutually relevant, so the cycles of length $|C_2|$ in $\{C_2\}\cup\expand{\calM_1}{C_2}$ are not contained in $\calM_2$. In particular, there exists $C_1\in\expand{\calM_1}{C_2}$ of length $|C_1|=|C_2|$ such that $C_1\in\calM_1\setminus\calM_2$. By Lemma~\ref{Lem:Relevant_MCB_Test}, Statement 2, we may exchange $C_2$ for $C_1$ in $\calM_1$ as desired.
\end{proof}

\subsection{Short loop-interchangeability}
\label{app:TICE_Proofs}
Here, we prove the theoretical results for the {\tt sli} relation from Section~\ref{sub:TICE_def}.
Recall, two equal length cycles $C_1$ and $C_2$ satisfy $C_1\slInt C_2$ if there exist shorter cycles $\calY$, $|C|{<}|C_1|$ for $C{\in}\calY$, such that
\begin{equation}
	C_1 = C_2 \oplus\bigoplus_{C\in\calY}C.\label{eq:TrEx_def2}
\end{equation}
First, we prove {\tt sli} is an equivalence relation.

\begin{proof}[Proof of Lemma~\ref{Lem:TrInt_Eq_Rel}]
	The reflexive property is immediate by $C=C\oplus\emptyset$.
	
	(symmetry) Let $C_1\slInt C_2$ via shorter cycles $\calY$. By adding cycles $\calY$ to both sides of Eq.~\eqref{eq:TrEx_def2} we find that $C_2\slInt C_1$:
	\begin{equation*}
		C_1 = C_2\oplus\bigoplus_{C\in\calY} C \enspace\Rightarrow\enspace 
		C_2 = C_1\oplus\bigoplus_{C\in\calY}C.
	\end{equation*}
	
	(transitivity) Let $C_1\slInt C_2$ and $C_2\slInt C_3$ by sets of smaller cycles $\calY$ and $\calY'$ respectively,
	\begin{align*}
		C_1 = C_2\oplus\bigoplus_{C\in\calY}C,\qquad
		C_2 = C_3\oplus\bigoplus_{\mathclap{C'\in\calY'}}C'.
	\end{align*}
	Substituting the right-hand side of the second equation for $C_2$ in the first equation gives
	\begin{equation*}
		C_1 = \left(C_3\oplus\bigoplus_{\mathclap{C'\in\calY'}}C'\right)\oplus\bigoplus_{C\in\calY}C=C_3\oplus\bigoplus_{\mathclap{C''\in\calY\symdiff\calY'}}C''.
	\end{equation*}
	Hence, $C_1{\slInt}\,C_3$ via shorter cycles $\calY\symdiff\calY'$.
\end{proof}

Next, we prove $C_1$ and $C_2$ satisfy $C_1\slInt C_2$ if and only if their expansions with respect to a given MCB differ by shorter cycles.

\begin{proof}[Proof of Lemma~\ref{Lem:TrEx_Alg}]
	($\Leftarrow$) Let $\calM$ be an MCB and $C_1,C_2\in\RelCyc$ such that $|C_1|=|C_2|$. Let the expansions of $C_1$ and $C_2$ in $\calM$ differ by shorter cycles, i.e., there exist cycles $\calY\subseteq\calM$, $|C|<|C_1|$ for $C\in\calY$, such that
	\begin{equation}
		\expand{\calM}{C_1}=\expand{\calM}{C_2}\symdiff\calY.
		\label{eq:MCB_TrEx}
	\end{equation}
	By summing over cycles in Eq.~\eqref{eq:MCB_TrEx} we obtain
	\begin{equation*}
		C_1 = C_2\oplus\bigoplus_{C\in\calY} C.
	\end{equation*}
	Thus, $C_1{\slInt}\,C_2$ via shorter cycles $\calY$ as desired.
	
	($\Rightarrow$) Let $C_1\slInt C_2$ by shorter cycles $\calY$ as in Eq.~\eqref{eq:TrEx_def2}.
	Then, by linearity of the expansion operator applied to Eq.~\eqref{eq:TrEx_def2} we obtain
	\begin{equation}
		\begin{split}
			\expand{\calM}{C_1} &= \expand{\calM}{C_2} \symdiff \expand{\calM}{\bigoplus_{C\in\calY} C}\\
			&= \expand{\calM}{C_2} \symdiff \calY'
		\end{split}
	\end{equation}
	where $\calY'\subseteq\calM$ is a set of shorter cycles in $\calM$ by Lemma~\ref{Lem:SmallCycs2MCB}. Thus, the expansions of $C_1$ and $C_2$ in $\calM$ differ by shorter cycles $\calY'$ as desired.	
\end{proof}

\section{Proofs for Section~\ref{sub:pair_intersect}}
\label{app:intersect_proofs}


We prove these results using a distance-based approach. 
The distance $d(u,v)$ between nodes of a graph $G$ is the length of a shortest path connecting $u$ and $v$.
Recall, a path $P$ belonging to a relevant cycle $C$ of length $|P|\le |C|/2$ is a shortest path~\cite{vismaraUnion1997} -- see Lemma~\ref{lem:shortest_paths}. 
A pair of nodes $u$ and $v$ in $C$ are connected by two paths in $C$, one for each orientation of $C$. 
The shorter of these paths, $P_{u,v}$, is bounded in length by $|P_{u,v}|\le |C|/2$. 
Thus, every pair of nodes in $C$ are connected by a shortest path of $G$ in $C$. We restate this as a result:

\begin{corollary}[distance preservation]
	Let $C$ be a relevant cycle of a graph $G$ and let $u,v$ be nodes belonging to $C$. The distance between $u$ and $v$ with respect to $C$ is equal to their distance with respect to $G$.
	\label{Cor:dist_pres}
\end{corollary}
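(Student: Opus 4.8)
The plan is to reduce the statement directly to Lemma~\ref{lem:shortest_paths}. Since $C$ is a relevant cycle, it is a simple loop, so the only two $u$--$v$ paths contained in $C$ are the two complementary arcs obtained by traversing $C$ in each direction. First I would name these arcs $P_{u,v}$ and $P'_{u,v}$, noting $|P_{u,v}|+|P'_{u,v}|=|C|$, and take $P_{u,v}$ to be the shorter one, so that $|P_{u,v}|\le|C|/2$.

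The distance from $u$ to $v$ measured within $C$ is, by definition, the length of the shorter arc, namely $|P_{u,v}|$. Hence it suffices to show that this quantity equals $d(u,v)$ in the ambient graph $G$, i.e. that $P_{u,v}$ is a shortest $u$--$v$ path in $G$. The inequality $d(u,v)\le|P_{u,v}|$ is automatic, because $P_{u,v}\subseteq C\subseteq G$ is a genuine $u$--$v$ path in $G$. The reverse inequality, that $P_{u,v}$ admits no shortcut in $G$, is exactly the content of Lemma~\ref{lem:shortest_paths}: a path lying in a relevant cycle $C$ whose length is at most $|C|/2$ is a shortest path of $G$. Combining the two directions gives $d(u,v)=|P_{u,v}|$, which is the assertion.

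There is essentially no obstacle here, as the result is an immediate specialization of Lemma~\ref{lem:shortest_paths}. The only two points requiring care are (i) invoking that a relevant cycle is a simple loop, so that ``the two arcs'' are well defined and exhaust the $u$--$v$ paths inside $C$, and (ii) selecting $P_{u,v}$ to be the \emph{shorter} arc, which is precisely what guarantees the length bound $|P_{u,v}|\le|C|/2$ needed to apply the lemma. These are the same preparatory observations already stated in the discussion preceding the corollary.
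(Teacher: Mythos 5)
Your proof is correct and follows the same route as the paper, which derives the corollary in the paragraph immediately preceding it: take the shorter of the two arcs joining $u$ and $v$ in $C$, note its length is at most $|C|/2$, and apply Lemma~\ref{lem:shortest_paths}. Your additional remark that $d(u,v)\le|P_{u,v}|$ is automatic is a harmless (and correct) explicit step the paper leaves implicit.
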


Using Corollary~\ref{Cor:dist_pres}, we employ the notion of resolving sets. A resolving set~\cite{tillquist2023getting} is a set of nodes $R=\{r_1,\hdots,r_k\}\subseteq V$ such that each node $x\in V$ is uniquely identified by its distances to the nodes $R$. For general graphs, an efficient resolving set is hard to construct. However, a resolving set of a cycle requires only two nodes.

\begin{lemma}
	Let $C$ be a cycle graph and $u,v$ nodes in $C$ such that $d(u,v)<|C|/2$. Then, every node $x$ in $C$ is uniquely identified by its distances to $u$ and $v$, i.e., the function
	\begin{equation}
		\varphi(x) = (d(x,u),d(x,v)),\quad x\in V(C)\label{eq:phi_def}
	\end{equation}
	is one-to-one.
	\label{Lem:cyc_embed}
\end{lemma}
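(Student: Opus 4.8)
The plan is to exploit the single reflection symmetry of a cycle that fixes $u$. Label the vertices $0,1,\ldots,|C|-1$ cyclically with $u$ at position $0$, and let $\sigma$ denote the reflection $\sigma(p)=(|C|-p)\bmod |C|$, which is an isometry of $C$ fixing exactly $u$ and, when $|C|$ is even, the antipode $\bar u$ at position $|C|/2$. The first observation is that $d(\cdot,u)$ is $\sigma$-invariant, and more precisely that for $0<c<|C|/2$ the set of vertices at distance $c$ from $u$ is exactly the pair $\{x,\sigma(x)\}$ with $x\neq\sigma(x)$, while the values $c=0$ and $c=|C|/2$ are attained only at the fixed vertices $u$ and $\bar u$. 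Consequently, if $\varphi(x)=\varphi(y)$ with $x\neq y$, the equality of first coordinates $d(x,u)=d(y,u)$ forces $y=\sigma(x)$ and $x\notin\{u,\bar u\}$. So injectivity of $\varphi$ reduces to showing that the second coordinate separates such a pair, i.e. $d(x,v)\neq d(\sigma(x),v)$ for every non-fixed $x$.

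Next I would rewrite this using the isometry property of $\sigma$. Since $\sigma$ preserves distances and is an involution, $d(\sigma(x),v)=d(\sigma(x),\sigma(\sigma(v)))=d(x,\sigma(v))$, so the condition to rule out becomes $d(x,v)=d(x,\sigma(v))$. Thus the whole lemma reduces to a single clean statement: \emph{the only vertices equidistant from $v$ and $\sigma(v)$ are the $\sigma$-fixed vertices $u$ and $\bar u$.} Here is exactly where the hypothesis $d(u,v)<|C|/2$ enters. Writing $a=d(u,v)$, distance preservation (Corollary~\ref{Cor:dist_pres}) lets me place $v$ at position $a$ with $0<a<|C|/2$, so that $\sigma(v)$ sits at position $|C|-a\neq a$; the bound guarantees both that $v\neq u$ (giving a genuine second landmark) and that $v$ is not the antipode of $u$ (so $v$ does not lie on the reflection axis, which is what would destroy the argument).

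The crux, and the main obstacle, is verifying this equidistant-locus claim on a discrete cycle, where the geodesic distance is a minimum of two arc lengths and one must track which arc is shorter. I would prove it by elementary casework, splitting the cycle into the three arcs determined by $a$ and $|C|-a$. On the arc between $u$ and $v$ one computes $d(p,a)=a-p$ and $d(p,|C|-a)=\min(p+a,\,|C|-a-p)$ and checks that the equation $d(p,a)=d(p,|C|-a)$ forces $p=0$; on the far arc through $\bar u$ one similarly finds $p=|C|/2$ is the only solution; and the third arc is handled by the $\sigma$-symmetry of the equation itself, since swapping $a\leftrightarrow|C|-a$ leaves the condition invariant. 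The two candidate solutions $p=0$ and $p=|C|/2$ are precisely $u$ and $\bar u$ (the latter being a vertex only when $|C|$ is even), which closes the argument; when $|C|$ is odd the antipode is not a vertex and even fewer coincidences can occur.

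Putting the pieces together: any violation of injectivity would pair $x$ with $\sigma(x)\neq x$, hence with a non-fixed vertex equidistant from $v$ and $\sigma(v)$, contradicting the locus computation. Therefore $\varphi$ is one-to-one. I expect the only delicate bookkeeping to be the $\min$-casework in the final step; everything upstream is a formal consequence of the reflection $\sigma$ being a distance-preserving involution whose only fixed vertices are $u$ and its antipode.
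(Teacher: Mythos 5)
Your proof is correct, but it takes a genuinely different route from the paper's. The paper proves injectivity by drawing the image $\varphi(C)$ in the plane: it shows that $\varphi$ traces out the integer points of a rectangle (even $|C|$) or a rectangle with two corners clipped (odd $|C|$), observes that this closed polygon is not self-intersecting unless $u$ and $v$ are antipodal, and concludes. You instead exploit the reflection $\sigma$ fixing $u$: equality of first coordinates forces any colliding pair to be $\{x,\sigma(x)\}$ with $x$ not fixed, and then the isometry identity $d(\sigma(x),v)=d(x,\sigma(v))$ reduces the lemma to showing that the equidistant locus of $v$ and $\sigma(v)$ consists only of the $\sigma$-fixed vertices $u$ and $\bar u$, which you settle by arc-by-arc casework with the two-arc formula for cycle distance; the hypothesis $d(u,v)<|C|/2$ enters exactly to keep $v$ off the reflection axis so that $v\neq\sigma(v)$. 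Your argument is more elementary and arguably more rigorous, since the paper's ``the shape is not self-intersecting'' step leans on the figures; on the other hand, the paper's explicit description of $\varphi(C)$ as a rectangle is not incidental --- it is reused directly in the proof of Theorem~\ref{Thm:MCB_Intersect}, where $C_1$ and $C_2$ are embedded as two overlapping rectangles, so the pictorial route buys something downstream that your symmetry argument does not provide. One small slip in your write-up: the bound $d(u,v)<|C|/2$ does \emph{not} by itself force $v\neq u$ (that must be assumed separately, as the paper also implicitly does), though this is a degenerate edge case of the lemma statement rather than a defect of your argument.
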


\begin{proof}
	Let $u,v$ be nodes belonging to a cycle graph $C$ and $\varphi$ given by Eq.~\eqref{eq:phi_def}. We want to show $\varphi$ is bijective. We also want to describe the shape given by $\varphi(C)$ which will be useful in the proof of Theorem~\ref{Thm:MCB_Intersect} as well. When plotting $\varphi$, the x and y coordinates correspond to the distance of a given node to $u$ and $v$ respectively. This is shown for an even cycle in Figure~\ref{fig:Res_Set_Lem1}.
	\begin{figure}[h]
		\includegraphics{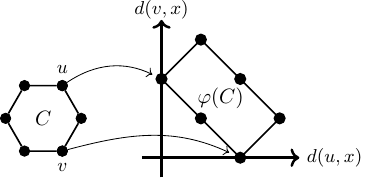}
		\caption{Even cycle for Lemma~\ref{Lem:cyc_embed}.}
		\label{fig:Res_Set_Lem1}
	\end{figure}
	\noindent We observe that $\varphi$ maps this cycle to the integer coordinates of a rectangle.
	
	In contrast, $\varphi$ maps an odd cycle to a rectangle with two of its corners cut off as shown in Figure~\ref{fig:Res_Set_Lem2} below.
	\begin{figure}[h]
		\includegraphics{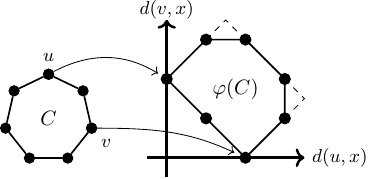}
		\caption{Odd cycle for Lemma~\ref{Lem:cyc_embed}.}
		\label{fig:Res_Set_Lem2}
	\end{figure}
	
	In either case, the function $\varphi$ maps our cycle to the integer coordinates of a polygon. 
	This shape is not self-intersecting, so $\varphi$ is one-to-one.
	The only exception to this is if the nodes $u$ and $v$ are antipodal ($d(u,v)=|C|/2$) as shown in Figure~\ref{fig:Res_Set_Lem3} below.
	\begin{figure}[h]
		\includegraphics{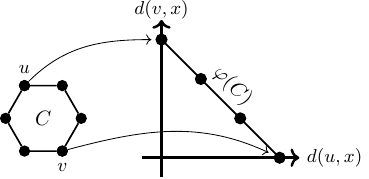}
		\caption{Degenerate case for Lemma~\ref{Lem:cyc_embed}.}
		\label{fig:Res_Set_Lem3}
	\end{figure}
	\noindent In this case, the symmetries of $u$ and $v$ cause the mapping $\varphi(C)$ to collapse from a rectangle into a line. 
	Because this line doubles back on itself the function $\varphi$ is not one-to-one when $d(u,v)=|C|/2$.
\end{proof}

Next, we use the resolving set representation of cycles to describe the intersection format of two cycles in an MCB.



\begin{proof}[Proof of Theorem~\ref{Thm:MCB_Intersect}]
	Let $C_1,C_2,|C_1|\le|C_2|,$ be cycles in an MCB $\calM$. 
	We split this proof into three cases, a main case where the methods in Lemma~\ref{Lem:cyc_embed} apply, and two degenerate cases.
	
	We remark that a path can be a single node, e.g., the path $P_3$ in Figure~\ref{fig:intersect_pf_2}. 
	The length of a path is the number of edges it has. A path with one node has length 0.


	\emph{Main case.} Let $u$ and $v$ be nodes belonging to both $C_1$ and $C_2$ such that $d(u,v)\le |C_1|/2$. 
	Then, we may use the function $\varphi(x)=(d(u,x),d(v,x))$ to embed $C_1$ and $C_2$ as overlapping rectangles $\varphi(C_1)$ and $\varphi(C_2)$ (or rectangles with cut corners as in Figure~\ref{fig:Res_Set_Lem2}). 
	By Lemma~\ref{Lem:cyc_embed}, the mapping $\varphi$ is one-to-one when its domain is restricted to $C_1$ or to $C_2$.
	This shared embedding is shown in Figure~\ref{fig:intersect_pf_1}.
	\begin{figure}[h]
		\includegraphics[]{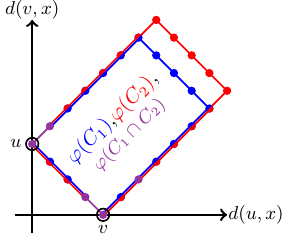}
		\caption{Embedding of $C_1$ and $C_2$ for the main case of Theorem~\ref{Thm:MCB_Intersect}.}
		\label{fig:intersect_pf_1}
	\end{figure}
	In Figure~\ref{fig:intersect_pf_1}
	blue nodes belong to $C_1$, red nodes to $C_2$, and purple nodes to both. 
	Dual blue and red points denote a pair of nodes $x_1\in V(C_1)$, $x_2\in V(C_2)$ that share the same embedding $\varphi(x_1)=\varphi(x_2)$.
	Edges are colored similarly.
	
	The mapping $\varphi$ couples the cycles $C_1$ and $C_2$ by stacking them on top of each other. 
	We orient $C_1$ and $C_2$ by traversing $\varphi(C_1)$ and $\varphi(C_2)$ in the counter-clockwise direction.
	If $C_1$ and $C_2$ are equal in length, then the rectangles $\varphi(C_1)$ and $\varphi(C_2)$ overlap completely. 
	If $C_2$ is longer than $C_1$, then the rectangle $\varphi(C_2)$ extends past $\varphi(C_1)$, as in Figure~\ref{fig:intersect_pf_1}.
	
	
	We use this shared embedding to decompose $C_1$ and $C_2$ into paths, starting with their intersection paths $P_1,\hdots,P_k$.
	Since we traverse both $\varphi(C_1)$ and $\varphi(C_2)$ in the counter-clockwise direction, these intersection paths have the same order and orientation in $C_1$ and $C_2$.
	See Figure~\ref{fig:intersect_pf_2}.
	\begin{figure}[h]
	\includegraphics[]{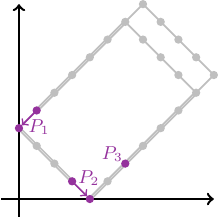}
	\caption{Intersection paths of $C_1$ and $C_2$ and their directions in the main case of Theorem~\ref{Thm:MCB_Intersect}.}
	\label{fig:intersect_pf_2}
	\end{figure}
	In Figure~\ref{fig:intersect_pf_2},
	we mark the orientation of $P_1$ and $P_2$ using purple arrows. The orientation of $P_3$ is arbitrary, because it is a single point.
	We remark that we have not selected a starting point when traversing $\varphi(C_1)$ and $\varphi(C_2)$. Because of this, the order of the intersection paths is circular, e.g., in Figure~\ref{fig:intersect_pf_2} $P_2$ follows $P_1$, $P_3$ follows $P_2$, and $P_1$ follows $P_3$.
	
	Next, we discuss the paths $Q_i^{(j)}$ that separate $C_1$ and $C_2$. All of the path pairs $Q_i^{(1)},Q_i^{(2)}$ fully overlap -- as in Figure~\ref{fig:intersect_pf_3} (a) -- unless $|C_1|<|C_2|$. If $C_1$ is shorter than $C_2$, then the pair that includes the northeast lines of the rectangles $\varphi(C_1),\varphi(C_2)$ do not overlap, as in Figure~\ref{fig:intersect_pf_3} (b).
	We prove three identities regarding the lengths of these pairs.
	\begin{figure}[h]
		\includegraphics[]{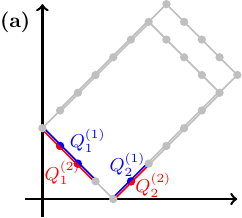}
		\includegraphics[]{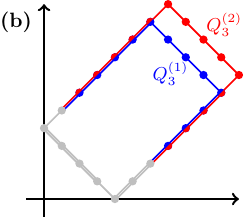}
		\caption{Separated path pairs $Q_i^{(1)},Q_i^{(2)}$ for the main case of Theorem~\ref{Thm:MCB_Intersect}.}
		\label{fig:intersect_pf_3}
	\end{figure}
	
	\begin{enumerate}
		\item Let $|C_1|<|C_2|$, and order the path pair with different lengths last: $|Q_k^{(1)}| \ne |Q_k^{(2)}|$. This is the pair shown in Figure~\ref{fig:intersect_pf_3} (b). We will show this pair satisfies
		\begin{align}
			|Q_k^{(2)}| &= |Q_k^{(1)}|+|C_2|-|C_1|\label{eq:Int_Thm_QId1a}\\
			|Q_k^{(j)}| &\ge |C_j|/2,\quad j=1,2.\label{eq:Int_Thm_QId1b}
		\end{align}
		Eq.~\eqref{eq:Int_Thm_QId1a} is obvious because $Q_k^{(1)}$ and $Q_k^{(2)}$ account for the difference in lengths between $C_1$ and $C_2$:
		\begin{equation*}
			|Q_k^{(2)}| - |Q_k^{(1)}| = |C_2|-|C_1|
		\end{equation*}
		As for Eq.~\eqref{eq:Int_Thm_QId1b}, consider the cycle $C_2'$ obtained by swapping the path $Q_k^{(2)}$ in $C_2$ for $Q_k^{(1)}$,
		\begin{equation*}
			C_2'= C_2\oplus C_k^e,\quad C_k^e = Q_k^{(1)}\cup Q_k^{(2)}.
		\end{equation*}
		$C_2$ cannot be broken into shorter cycles because it is relevant. $C_2'$ is shorter than $C_2$ because $Q_k^{(1)}$ is shorter than $Q_k^{(2)}$. Hence, the cycle $C_k^e$ used to exchange $C_2$ for $C_2'$ satisfies
		\begin{equation}
			|C_k^e| = |Q_k^{(1)}|+|Q_k^{(2)}| \ge |C_2|.
			\label{eq:Int_Thm_Exk_Ineq}
		\end{equation}
		Eq.~\eqref{eq:Int_Thm_QId1b} is obtained by combining Eqs.~\eqref{eq:Int_Thm_QId1a} and~\eqref{eq:Int_Thm_Exk_Ineq}.
		
		\item Let $|C_1|=|C_2|$. We will show there exists a unique large path pair satisfying
		\begin{equation}
			|Q_k^{(1)}|,|Q_k^{(2)}|\ge |C_1|/2.
		\end{equation}
		This pair is ordered last to match the previous item. 
		
		Suppose towards a contradiction that each path pair is shorter,
		\begin{equation*}
			|Q_i^{(1)}|=|Q_i^{(2)}| < |C_1|/2,
		\end{equation*}
		for $i=1,\hdots,k$. Then $C_2$ can be obtained from $C_1$ by swapping each path $Q_i^{(1)}$ for $Q_i^{(2)},$
		\begin{equation*}
			\begin{split}
			C_2 &= C_1\oplus(C_1^e\oplus\hdots\oplus C_k^e)\\
			C_i^e &= Q_i^{(1)}\cup Q_i^{(2)},\quad i=1,\hdots,k.
			\end{split}
		\end{equation*}
		The cycles $C_i^e$ are shorter
		\begin{equation*}
			|C_i^e| = |Q_i^{(1)}|+|Q_i^{(2)}|<|C_1|.
		\end{equation*}
		Thus, $C_1\slInt C_2$. This is a contradiction because at most one cycle in a {\tt sli} class may belong to an MCB by Corollary~\ref{Cor:TrInt_Exchanges}.
		
		\item Last, we show the remaining path pairs are bounded above in length by
		\begin{equation}
			|Q_i^{(1)}|,|Q_i^{(2)}|<|C_1|/2, 
			\label{eq:Int_Pf_Short}
		\end{equation}
		for  $i=1,\hdots,k-1$. 
		Consider a $Q_i^{(j)}$ that lies on a shortest path from $u$ to $v$, such as $Q_1^{(1)}$ or $Q_1^{(2)}$ in Figure~\ref{fig:intersect_pf_3}~(a). Such a path is bounded in length by
		\begin{equation*}
			|Q_i^{(j)}|\le d(u,v) < |C_1|/2.
		\end{equation*}
		Next, consider a $Q_i^{(j)}$ that does not lie on a shortest path from $u$ to $v$, e.g., $Q_2^{(1)}$ and $Q_2^{(2)}$ in Figure~\ref{fig:intersect_pf_3}~(a). Then, $Q_i^{(j)}, i\neq k$ is bounded in length by
		\begin{equation*}
			\begin{split}
			|Q_i^{(j)}| = |Q_i^{(1)}| &\le |C_1| - |Q_k^{(1)}|-d(u,v) \\&< |C_1|/2
			\end{split}
		\end{equation*}
		because (i) $|Q_i^{(1)}|=|Q_i^{(2)}|$ for $i\neq k$, (ii) the path $Q_k^{(1)},|Q_k^{(1)}|>|C_1|/2$ also does not lie on a shortest path from $u$ to $v$, and (iii) $d(u,v)>0$ because $u\neq v$.
		In both cases $Q_i^{(j)}$ satisfies the length bound in Eq.~\eqref{eq:Int_Pf_Short}.
	\end{enumerate}
	
	In summary, these paths satisfy the following:
	\begin{align}
		|Q^{(1)}_i| &\!=\! |Q^{(2)}_i|, &i=1,\!...,k-1,
		\label{eq:sm_path_equal_pf}\\
		|Q^{(j)}_i| &\!<\! |C_1|/2,\! &i{=}1,\!...,\!k{-}1, j{=}1,\!2,
		\label{eq:sm_path_len_pf}\\
		|Q^{(2)}_k| &\!=\! \mathrlap{|Q^{(1)}_k|+|C_2|-|C_1|,}
		\label{eq:lg_path_equal_pf}\\
		|Q^{(j)}_k| &\!\ge\! |C_j|/2,\! &j=1,2.
		\label{eq:lg_path_len_pf}
	\end{align}
	We note that, Eq.~\eqref{eq:sm_path_len_pf} is a strict inequality, which is a stronger result than we intended to show. 
	This observation will be useful in the proof of Theorem~\ref{Thm:MCB_Pair_Swap}. 
	This inequality does not extend to degenerate case 2 below.

	\emph{Degenerate case 1.} Let $C_1$ and $C_2$ share only a single node $u$. See Figure~\ref{fig:intersect_pf_deg1}. 
	\begin{figure}[h]
		\includegraphics{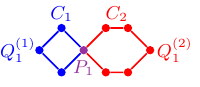}
		\caption{Degenerate case 1 for Theorem~\ref{Thm:MCB_Intersect}.}
		\label{fig:intersect_pf_deg1}
	\end{figure}
	Then they overlap over a single path $P_1=(u)$. They are separated by a pair of paths $Q_1^{(1)}$ and $Q_1^{(2)}$ connecting $u$ to itself. These paths are of length $|Q_1^{(1)}|=|C_1|$ and $|Q_1^{(2)}|=|C_2$ which satisfies the required length requirements of the larger path pairs $Q_k^{(1)}$, $Q_k^{(2)}$.

	\emph{Degenerate case 2.} Let $C_1$ and $C_2$ share two nodes $u$ and $v$ which are antipodal in $C_1$, i.e., $d(u,v)=|C_1|/2$. 
	See Figure~\ref{fig:intersect_pf_deg2}. 
	\begin{figure}[h]
		\includegraphics{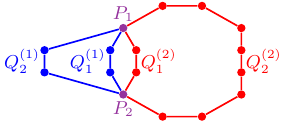}
		\caption{Degenerate case 2 for Theorem~\ref{Thm:MCB_Intersect}.}
		\label{fig:intersect_pf_deg2}
	\end{figure}
	In this case, $C_1$ and $C_2$ overlap over two paths consisting of single nodes $P_1=(u)$ and $P_2=(v)$. $C_1$ is composed of two shortest paths from $u$ to $v$. Label these paths $Q_1^{(1)}$ and $Q_2^{(1)}$ arbitrarily. Also, $C_2$ has a shortest path connecting $u$ and $v$. Label this path $Q_1^{(2)}$ where $|Q_1^{(1)}|=|Q_1^{(2)}|=|C_1|/2$ satisfying Eqs.~\eqref{eq:sm_path_equal} and~\eqref{eq:sm_path_len}. Let $Q_2^{(2)}$ be the other path connecting $u$ and $v$ in $C_2$. This path has length $|Q_2^{(2)}| = |C_2| - |C_1|/2$ which satisfies Eqs.~\eqref{eq:lg_path_equal} and~\eqref{eq:lg_path_len}.
\end{proof}

Next, we prove that if $C_1$ and $C_2$ belong to an MCB and they intersect over multiple paths, then we may exchange $C_1'$ for $C_1$ such that $C_1'$ and $C_2$ intersect over a single path.

\begin{proof}[Proof of Theorem~\ref{Thm:MCB_Pair_Swap}]
	
	Here, we assume the cycles $C_1,C_2$ intersect over multiple paths, otherwise, no modification is needed and we may set $C_1'=C_1$. By Theorem~\ref{Thm:MCB_Intersect}, let $C_1$ and $C_2$ intersect over paths $P_1,\hdots,P_k$ and deviate over path pairs $Q_i^{(j)}$, $j=1,2$, for $i=1,\hdots,k$. We split the proof into two cases. Note, we do not assume $|C_1|<|C_2|$.
	
	\emph{Main case.} 
	Let $C_1$ and $C_2$ belonging to an MCB $\calM$ share two nodes $u$ and $v$ of distance $d(u,v)<|C_1|/2$. As mentioned in the main case in the proof of Theorem~\ref{Thm:MCB_Intersect}, the paths $Q_i^{(1)}$ separating $C_1$ from $C_2$ satisfy the length bound $|Q_i^{(1)}|<|C_1|/2$ for $i=1,\hdots,k-1$. Also by Theorem~\ref{Thm:MCB_Intersect}, the paths $Q_i^{(2)}$ satisfy $|Q_i^{(2)}|=|Q_i^{(1)}|$ for $i=1,\hdots,k-1$.
	Thus, we may exchange the paths in $C_1$ for those in $C_2$ by adding cycles
	\begin{equation}
		C_i^e = {\color{blue}Q_i^{(1)}}\cup {\color{red}Q_i^{(2)}},\quad i=1,\hdots,k-1,
	\end{equation}
	resulting in the cycle $C_1'$ shown in Figure~\ref{fig:MCB_Pair_Swap_Main} below.
	
	\begin{figure}[h]
		\includegraphics{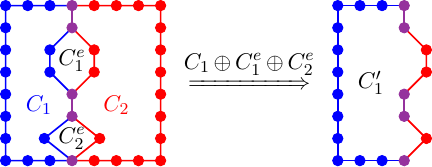}
		\caption{Main case of Theorem~\ref{Thm:MCB_Pair_Swap}.}
		\label{fig:MCB_Pair_Swap_Main}
	\end{figure}
	
	Importantly, the exchange cycles are shorter than $C_1$
	\begin{equation}
		|C^e_i| = |Q_i^{(1)}|+|Q_i^{(2)}|<|C_1|,
	\end{equation}
	Thus, the cycle
	\begin{equation}
		C_1' = C_1\oplus(C_1^e\oplus\hdots\oplus C_{k-1}^e)
	\end{equation}
	satisfies $C_1'\slInt C_1$.
	By Corollary~\ref{Cor:TrInt_Exchanges}, we may exchange $C_1'$ for $C_1$ to obtain a new MCB $\calM'=(\calM\setminus\{C_1\})\cup\{C_1'\}$. Moreover, $C_1'$ and $C_2$ intersect over a single larger path.
	
	\emph{Degenerate case.} 
	Let $C_1$ and $C_2$ belonging to an MCB $\calM$ intersect at two nodes $u$ and $v$ of distance $d(u,v)=|C_1|/2$. In this case, $C_1$ is composed of two shortest paths $Q_1^{(1)}$ and $Q_2^{(1)}$ joining $u$ to $v$. Also, $C_2$ has a shortest path $Q_1^{(2)}$ connecting $u$ to $v$. We consider two candidate cycles $C_1'$ and $C_1''$ obtained by switching one of the paths from $u$ to $v$ in $C_1$ for $Q_2^{(1)}$ in $C_2$. The cycles $C_1'$ and $C_1''$ and their relation to $C_1$ are shown in Figure~\ref{fig:MCB_Pair_Swap_Deg} below.
	
	\begin{figure}[h]
		\includegraphics{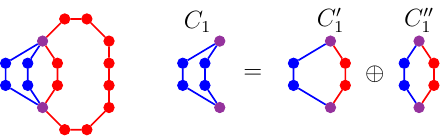}
		\caption{Degenerate case of Theorem~\ref{Thm:MCB_Pair_Swap}.}
		\label{fig:MCB_Pair_Swap_Deg}
	\end{figure}
	
	Algebraically, $C_1'$ and $C_1''$ are given by
	\begin{align}
		C_1' &= C_1\oplus({\color{red}Q_1^{(2)}}\cup{\color{blue}Q_2^{(1)}}) ={\color{blue}Q_1^{(1)}}\cup {\color{red}Q_1^{(2)}},\\
		C_1'' &= C_1\oplus({\color{blue}Q_1^{(1)}}\cup {\color{red}Q_1^{(2)}})={\color{red}Q_1^{(2)}}\cup {\color{blue}Q_2^{(1)}},
	\end{align}
	and $C_1$ is equal to their sum
	\begin{equation}
		C_1 = C_1'\oplus C_1'' = {\color{blue}Q_1^{(1)}}\cup{\color{blue}Q_2^{(1)}}.
	\end{equation}
	
	We remark that, $C_1'$ and $C_1''$ deviate from $C_1$ by equal length cycles, so we can not exchange them for $C_1$ using the {\tt sli} relation as in the main case.
	However, by linearity of the expansion operator, we observe $C_1$ is in the expansion of $C_1'$ or $C_1''$
	\begin{equation*}
		\expandd{\calM}{C_1'}\symdiff\expandd{\calM}{C_1''} = \mathcal{E}_{\calM}(\underbrace{C_1'\oplus C_1''}_{C_1}) = \{C_1\}.
	\end{equation*}
	Without loss of generality, let $C_1\in\expandd{\calM}{C_1'}$. Then, by Lemma~\ref{Lem:Relevant_MCB_Test}, $\calM'=(\calM\setminus\{C_1\})\cup\{C_1'\}$ is an MCB. Furthermore, $C_1'$ and $C_2$ intersect over a single path $Q_2^{(1)}$, completing our proof.
\end{proof}

	\bibliographystyle{ieeetr}
	\bibliography{bibliography.bib}

\end{document}